\documentclass[10pt]{amsart}
\usepackage{amssymb}
\usepackage{amsbsy}
\usepackage{amscd}
\usepackage[mathscr]{eucal}
\usepackage{verbatim}
\usepackage{version}
\usepackage{color}
\usepackage[matrix,arrow]{xy}

\def\cal{\mathcal}
\def\Bbb{\mathbb}
\def\frak{\mathfrak}

\newenvironment{NB}{
\color{red}{\bf NB}. \footnotesize 
}{}
\excludeversion{NB}
\newenvironment{NB2}{
\color{blue}{\bf NB}. \footnotesize
}{}





\newcommand{\Pic}{\operatorname{Pic}}
\newcommand{\Quot}{\operatorname{Quot}}
\newcommand{\Supp}{\operatorname{Supp}}
\newcommand{\ch}{\operatorname{ch}}

\newcommand{\Coh}{\operatorname{Coh}}

\newcommand{\Ext}{\operatorname{Ext}}
\newcommand{\Hom}{\operatorname{Hom}}

\newcommand{\im}{\operatorname{im}}

\newcommand{\rk}{\operatorname{rk}}

\newcommand{\NS}{\operatorname{NS}}

\newcommand{\td}{\operatorname{td}}

\newcommand{\Amp}{\operatorname{Amp}}

\newcommand{\tr}{\operatorname{tr}}

\newcommand{\NSf}{\operatorname{NS_{\mathrm{f}}}}

\font\b=cmr10 scaled \magstep5
\def\bigzerou{\smash{\lower1.7ex\hbox{\b 0}}}

\setlength{\topmargin}{-2.0cm}
\setlength{\oddsidemargin}{-0.25cm}
\setlength{\evensidemargin}{-0.25cm}
\setlength{\textheight}{25.5cm}
\setlength{\textwidth}{16.5cm}
\numberwithin{equation}{section}

\theoremstyle{plain}
 \newtheorem{thm}{Theorem}[section]
 \newtheorem{lem}[thm]{Lemma}
 \newtheorem{prop}[thm]{Proposition}
 \newtheorem{cor}[thm]{Corollary}
 
\theoremstyle{definition}
 \newtheorem{defn}[thm]{Definition}
 
\theoremstyle{remark}
 \newtheorem{rem}[thm]{Remark}

\pagestyle{plain}

\begin{document}

\title{A note on stable sheaves on Enriques 
surfaces}
\author{K\={o}ta Yoshioka}
\address{Department of Mathematics, Faculty of Science,
Kobe University,
Kobe, 657, Japan
}
\email{yoshioka@math.kobe-u.ac.jp}

\thanks{
The author is supported by the Grant-in-aid for 
Scientific Research (No.\ 26287007,\ 24224001), JSPS}
\keywords{Enriques surfaces, stable sheaves}

\begin{abstract}
We shall give a necessary and sufficient condition for the 
existence of stable sheaves on Enriques surfaces based on results of
Kim, Yoshioka, Hauzer and Nuer.
For unnodal Enriques surfaces, we also study the relation of virtual
Hodge ``polynomial'' of the moduli stacks.
\end{abstract}

\maketitle

\renewcommand{\thefootnote}{\fnsymbol{footnote}}
\footnote[0]{2010 \textit{Mathematics Subject Classification}. 
Primary 14D20.}

\section{Introduction}
Studies of
moduli spaces of stable sheaves on Enriques surfaces were
started by a series of
works of Kim \cite{Kim1},\ \cite{Kim2},\ \cite{Kim3},\ 
\cite{Kim:excep},\ \cite{Kim4}.
In particular, he studied exceptional bundles and the singular locus
of the moduli spaces. Recently the type of singularities are investigated
by Yamada \cite{Yamada}.
For the topological properties of the moduli spaces,
the author \cite{Y:twist1} 
computed the Hodge polynomials of the moduli spaces
if the rank is odd. In particular, the condition for
the non-emptiness of the moduli spaces are known. 
For the even rank case,
by extending our arguments, Hauzer \cite{Hauzer}
related the virtual Hodge ``polynomial'' of the moduli
spaces to those for rank 2 or 4.
Then Nuer \cite{N} gave
the condition for the non-emptiness
by studying the non-emptiness for rank 2 and 4 cases. 
The main purpose of this note is
to give another proof of his result on the non-emptiness.

\begin{thm}\label{thm:exist:intro}
Let $X$ be an unnodal Enriques surface over ${\Bbb C}$.
For $r,s \in {\Bbb Z}$ and $L \in \NS(X)$ such
that $r-s$ is even, 
let ${\cal M}_H(r,L,-\frac{s}{2})$ be the stack of 
semi-stable sheaves $E$ of rank $r>0$, $\det E=L$ and
$\chi(E)=\frac{r-s}{2}$, where the polarization is $H$.
Assume that 
$\gcd(r,c_1(L),\frac{r-s}{2})=1$, i.e., the Mukai vector
is primitive.
Then
${\cal M}_H(r,L,-\tfrac{s}{2}) \ne \emptyset$ for a general $H$
if and only if
\begin{enumerate}
\item
$\gcd(r,c_1(L),s)=1$ and $(c_1(L)^2)+rs \geq -1$ or 
\item 
$\gcd(r,c_1(L),s)=2$ and $(c_1(L)^2)+rs \geq 2$ 
or 
\item
$\gcd(r,c_1(L),s)=2$,
$(c_1(L)^2)+rs =0$ and $L \equiv \frac{r}{2}K_X \mod 2$.
\end{enumerate}
If $r=0$, then by assuming $L$ to be effective,
the same claim holds.
\end{thm}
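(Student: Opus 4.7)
The plan is to combine the K3 double cover $\pi\colon Y\to X$ with the reduction results already in \cite{Y:twist1} and \cite{Hauzer}. Writing the Mukai vector of $E$ as $v=(r,c_1(L),(r-s)/2)$, the Mukai square equals $\langle v,v\rangle=(c_1(L)^2)+rs$, so the three numerical inequalities in the theorem are bounds on $\langle v,v\rangle$. Pullback to $Y$ gives $\langle \pi^*v,\pi^*v\rangle=2\langle v,v\rangle$, and on a generic polarized K3 the moduli of semistable sheaves with a given primitive Mukai vector is nonempty iff the square is $\geq -2$; this is the K3 shadow of condition (i).

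For \emph{necessity}, if $E$ is stable and $E\not\cong E\otimes K_X$, then $\Hom(E,E\otimes K_X)=0$, so Serre duality gives $\Ext^2(E,E)=0$, and Hirzebruch--Riemann--Roch on $X$ forces $\langle v,v\rangle\geq -1$, giving case (i). When $\gcd(r,c_1(L),s)=2$ one checks that $v(E\otimes K_X)=v(E)$, so a generic semistable $E$ with this Mukai vector is $K_X$-invariant, $\Ext^2(E,E)\cong\Hom(E,E\otimes K_X)^*\ne 0$, and the dimension count for the moduli stack gains an extra term which forces $\langle v,v\rangle\geq 2$, with the single exceptional isotropic possibility $\langle v,v\rangle=0$ captured in (iii) and governed by the parity class of $L$ modulo $2\NS(X)$.

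For \emph{sufficiency}, the odd rank case is handled directly by \cite{Y:twist1}, where a Fourier--Mukai transform reduces a primitive $v$ to rank $1$ (a twisted ideal sheaf) and nonemptiness is obvious throughout the relevant range. In the even rank case I would invoke Hauzer's reduction \cite{Hauzer} to bring $v$ into rank $2$ or $4$, and then construct sheaves by hand: in rank $2$ via Serre's construction, realizing $E$ as an extension $0\to L_1\to E\to I_Z\otimes L_2\to 0$ with $L_1,L_2\in\NS(X)$ and a finite subscheme $Z$ chosen so that the Mukai vector matches and $H$-stability holds for generic $H$; in rank $4$ either by iterating this extension or by pulling back a stable bundle from $Y$ and descending through the action of the covering involution, which is where the required parity of $L$ relative to $\tfrac{r}{2}K_X$ appears.

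The hardest step is the boundary of cases (ii) and (iii): at $\langle v,v\rangle=0$ with $\gcd(r,c_1(L),s)=2$, existence depends delicately on the class of $L$ modulo $2\NS(X)+\tfrac{r}{2}K_X$. Matching this parity obstruction, which is cohomological on $Y$ and reflects the $K_X$-equivariance of the Fourier--Mukai kernel, with an explicit rank $2$ or $4$ construction on $X$, is where Kim's analysis of exceptional bundles \cite{Kim:excep} and Nuer's constructions \cite{N} need to be invoked, and where the most care is needed to avoid producing only strictly semistable objects.
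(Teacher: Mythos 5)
Your outline mixes two routes: (a) the Hauzer--Kim--Nuer route (reduce to rank $2$ or $4$ and construct sheaves by Serre extensions), which is precisely the proof the paper is offering an alternative to, and (b) a descent argument from the K3 cover, which the paper does not use at all in the unnodal case (it only invokes $\pi_*$ from the cover in the nodal, $\langle v^2\rangle=-2$ situation of Lemma~\ref{lem:exceptional}). The paper's actual mechanism is different: it sharpens Hauzer's reduction so that \emph{every} even-rank primitive $v$ is connected to a rank $2$ vector by $(-1)$-reflections and twists by line bundles (Theorem~\ref{thm:e-poly}), with the virtual Hodge ``polynomial'' of the moduli stack as the invariant that transports non-emptiness; rank $2$ is then settled by Kim \cite{Kim4} or by a relative Fourier--Mukai transform along an elliptic fibration (Corollary~\ref{cor:elliptic}). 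This completely avoids both the rank $4$ Serre construction of \cite{N} and any equivariant descent from the K3 cover.

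There are two genuine gaps in your version. First, the necessity direction in cases (ii)/(iii) is not established by your ``extra term in the dimension count.'' From $E\cong E\otimes K_X$ and $\Ext^2(E,E)\ne 0$ one only gets $\langle v^2\rangle\ge -2$; the improvement comes from divisibility ($2\mid r$, $2\mid c_1$, $2\mid s$ force $(c_1^2)+rs\equiv 0\bmod 4$, so $\ge -2$ becomes $\ge 0$), and the exclusion of the isotropic case with $L\not\equiv\frac{r}{2}K_X\bmod 2$ requires a separate argument: in the paper, $M_H(2,K_X,0)\cong X$ carries a universal family inducing a Fourier--Mukai equivalence, which shows every stable sheaf with that numerical Mukai vector already has determinant $\equiv K_X$, hence $M_H(2,0,0)=\emptyset$ (Proposition~\ref{prop:exist2}). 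You assert the parity dichotomy but supply no mechanism that rules out the wrong parity. Second, the K3-cover step is not a proof as stated: a stable sheaf on $Y$ with Mukai vector $\pi^*v$ need not carry an involution-linearization, $\pi^*v$ need not be primitive, and $\pi_*$ of a stable sheaf on $Y$ has Mukai vector $w+\iota^*w$ rather than an arbitrary prescribed $v$, so ``nonemptiness on a generic K3 iff square $\ge -2$'' does not transport to $X$ without a substantial equivariant argument --- which is exactly where the whole difficulty of the theorem sits. If you instead carry out the Serre constructions in ranks $2$ and $4$ with the stability verifications, you recover Nuer's proof \cite{N}; as written, those verifications are deferred rather than done.
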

Since $v$ is primitive and $H$ is general,
semi-stability implies stability.

In order to explain the difference of the proofs,
we first mention the results in \cite{Y:twist1} and \cite{Hauzer}.
In \cite{Y:twist1},
we introduced the virtual Hodge ``polynomial''
$e({\cal M}_H(r,L,-\frac{s}{2}))$ 
of the moduli stacks, which is an extension of the virtual Hodge
polynomial of an algebraic set and showed that 
it is preserved under a special kind of Fourier-Mukai
transform. 
As an application, we showed that 
$e({\cal M}_H(r,L,-\frac{s}{2}))$ 
is the same as 
$e({\cal M}_H(1,0,\frac{1}{2}-n))$ 
if $r$ is odd, where $2n=(c_1(L)^2)+rs+1$
\cite[Thm. 4.6]{Y:twist1}.
In particular we get the condition 
$(c_1(L)^2)+rs \geq -1$ for the non-emptiness.
Hauzer \cite{Hauzer} generalized our method and showed that
$e({\cal M}_H(r,L,-\frac{s}{2}))$ 
is the same as 
$e({\cal M}_H(r',L',-\frac{s'}{2}))$ 
where $r'=2,4$ and $(c_1(L')^2)+r's'=(c_1(L)^2)+rs$.
For the rank 2 case,
the condition of non-emptiness follows
by Kim's results \cite{Kim4}.
Thus the remaining problem is to treat the rank 4 case.

For this problem, Nuer \cite[Thm. 5.1]{N}
constructed $\mu$-stable vector bundles of rank 4
by Serre construction, and got the 
condition for the non-emptiness.
On the other hand, we shall 
reduce the rank 4 case to  
the rank 2 case by improving Hauzer's argument
(Theorem \ref{thm:e-poly}).
Combining Kim's results \cite{Kim4}, 
Theorem \ref{thm:exist:intro} follows.
For convenience sake,
we also give another argument for the rank 2 case
using a relative Fourier-Mukai
transform associated to an elliptic fibration.
Replacing virtual Hodge ``polynomial'' by numbers of ${\Bbb F}_q$-rational
points, our result also holds for unnodal Enriques surfaces
over an algebraically closed field of characteristic $p \ne 2$. 
As a corollary of Theorem \ref{thm:exist:intro},
by adding a deformation argument, 
we shall treat the nodal case in Section \ref{sect:nodal}.

Finally I would like to remark another approach in Appendix.
For our argument, main tool is a special
kind of Fourier-Mukai transforms.
For the case of K3 surfaces, Toda \cite{Toda} proved 
a certain counting invariant of the moduli stack of Bridgeland
semi-stable objects are invariant under Fourier-Mukai transforms.
Since Gieseker stability corresponds to the large volume limit
of Bridgeland stability,
it is possible to get Theorem \ref{thm:exist:intro} 
by a more sophisticated method, i.e., Bridgeland theory of 
stability conditions \cite{Br:3}.  
For a more general treatment,
we recommend a reference \cite{N2}.

\section{Proof of Theorem \ref{thm:exist:intro}}


\subsection{Notation and some tools}

We prepare several notation and results which will be used.

The Mukai vector $v(x)$ of $x \in K(X)$
is defined as an element of $H^*(X,{\Bbb Q})$:
\begin{equation}
\begin{split}
v(x):=&\ch(x)\sqrt{\td_X}\\
=&\rk(x)+c_1(x)+\left(\frac{\rk(x)}{2}\varrho_X+\ch_2(x)\right)
\in H^*(X,{\Bbb Q}),
\end{split}
\end{equation}
where $\varrho_X$ is the fundamental class of $X$.
We also introduce Mukai's pairing on $H^*(X,{\Bbb Q})$ by
$\langle x,y \rangle:=-\int_X x^{\vee} \wedge y$.
Then we have an isomorphism of lattices:
\begin{equation}
(v(K(X)),\langle \;\;,\;\;\rangle) \cong
\begin{pmatrix}
1&0\\
0&-1
\end{pmatrix}
\oplus 
\begin{pmatrix}
0&1\\
1&0
\end{pmatrix}
\oplus E_8(-1).
\end{equation}
\begin{defn}
We call an element of $v(K(X))$ by the Mukai vector.
A Mukai vector $v$ is primitive, if
$v$ is primitive as an element of $v(K(X))$.
\end{defn}
We denote the torsion free quotient of $\NS(X)$ by
$\NSf(X)$, that is, $\NSf(X)=\NS(X)/{\Bbb Z}K_X$.

\begin{lem}\label{lem:Hauzer}
Let $v=(r,c_1,-\frac{s}{2})$ ($r,s \in {\Bbb Z}$, $2\mid r-s$, 
$c_1 \in \NSf(X)$) be a Mukai vector.
 \begin{enumerate}
\item[(1)]
$v$ is primitive if and only if $\gcd(r,c_1,\frac{r-s}{2})=1$.
\item[(2)]
Assume that $v$ is primitive.
We set $\ell:=\gcd(r,c_1,s)$. Then $\ell=1,2$.
\begin{enumerate}
\item
If $\ell=1$, then $\gcd(r,c_1,2)=1$.
\item
If $\ell=2$, then $2 \mid r$, $2 \mid c_1$, $2 \mid s$ and 
$r+s \equiv 2 \mod 4$.
\end{enumerate}
\end{enumerate} 
\end{lem}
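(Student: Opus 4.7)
The plan for (1) is to identify the Mukai lattice $v(K(X))$ with $\mathbb{Z} \oplus \NSf(X) \oplus \mathbb{Z}$ through the triple of invariants $(\rk, c_1, \chi)$. Because $c_1(X) = -K_X$ is $2$-torsion and $\chi(\mathcal{O}_X) = 1$ on an Enriques surface, one has $\td_X = 1 + \varrho_X$ and $\sqrt{\td_X} = 1 + \tfrac{1}{2}\varrho_X$, so the Mukai vector of $E$ reads $v(E) = \rk(E) + c_1(E) + (\chi(E) - \tfrac{\rk(E)}{2})\varrho_X$. Hence $v = (r, c_1, -\tfrac{s}{2})$ corresponds to the integral triple $(r, c_1, \tfrac{r-s}{2})$ in these coordinates (the third slot is precisely $\chi$), and primitivity of $v$ in the lattice $v(K(X))$ is equivalent to $\gcd(r, c_1, \tfrac{r-s}{2}) = 1$.

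For (2), set $\ell := \gcd(r, c_1, s)$. The first step is to show $\ell \in \{1, 2\}$. If an odd prime $p$ divided $\ell$, then $p \mid r - s$ would imply $p \mid \tfrac{r-s}{2}$, contradicting primitivity. If $4 \mid \ell$, then $4 \mid r - s$ gives $2 \mid \tfrac{r-s}{2}$, and together with $2 \mid r$, $2 \mid c_1$ this again contradicts primitivity. Hence $\ell \mid 2$. For (a), if $\ell = 1$ and yet $2 \mid r$ and $2 \mid c_1$ in $\NSf(X)$, then $2 \mid r - s$ combined with $2 \mid r$ forces $2 \mid s$, giving $\ell \ge 2$, contradiction; so $\gcd(r, c_1, 2) = 1$. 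For (b), the divisibilities $2 \mid r, c_1, s$ are immediate from $\ell = 2$; writing $r = 2r'$, $s = 2s'$, $c_1 = 2c_1'$, primitivity rewrites as $\gcd(2r', 2c_1', r' - s') = 1$, which forces $r' - s'$ to be odd, so $r'$ and $s'$ have opposite parity and $r + s = 2(r' + s') \equiv 2 \pmod 4$.

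The whole argument reduces to elementary number theory once the identification in (1) is in hand. The only mildly subtle point is the shift between the half-integer $\varrho_X$-coefficient $-\tfrac{s}{2}$ appearing in the Mukai vector and the integer $\tfrac{r-s}{2} = \chi$ that actually parametrizes the lattice; since $r$ is already among the arguments of the gcd, translating between the two coordinates leaves the gcd condition unchanged, so no real obstacle arises.
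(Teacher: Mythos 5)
Your proof is correct and follows essentially the same route as the paper: identify $v(K(X))$ with $\mathbb{Z}\oplus\NSf(X)\oplus\mathbb{Z}$ via $(\rk,c_1,\chi)$ so that primitivity becomes $\gcd(r,c_1,\tfrac{r-s}{2})=1$, then finish (2) by elementary parity arguments (the paper gets $\ell\mid 2$ from $s=r+2\tfrac{s-r}{2}$ in one line, you argue prime by prime, but the content is identical). No gaps.
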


\begin{proof}
(1)
For $E=r {\cal O}_X+F \in K(X)$ with $\rk F=0$,
$v(E)=(r,0,\frac{r}{2})+(0,D,t)$, where $D \in \NSf(X)$ and $t \in {\Bbb Z}$.
Then $v(E)$ is primitive if and only if
$\gcd(r,D,t)=1$.
If $v=v(E)$, then $c_1=D$ and $t+\frac{r}{2}=-\frac{s}{2}$.
Hence $\gcd(r,c_1,\frac{r-s}{2})=\gcd(r,D,t)$, which shows the claim.

(2)
It is \cite[Lem.\ 2.5]{Hauzer}.
For convenience sake, we give a proof.
Since $s=r+2\frac{s-r}{2}$,
$\ell=1,2$. If $\ell=1$, then $\gcd(r,c_1,2)=1$.
If $\ell=2$, then $2 \mid r$, $2 \mid c_1$.
Since $\gcd(r,c_1,\frac{s-r}{2})=1$, 
$r+s \equiv 2 \mod 4$.
\end{proof}

For a variety $Y$ over ${\Bbb C}$, the cohomology with compact support
$H^*_c(Y,{\Bbb Q})$ has a natural mixed Hodge structure.
Let $e^{p,q}(Y):=\sum_k(-1)^k h^{p,q}(H_c^k(Y))$ be the virtual Hodge number
and $e(Y):=\sum_{p,q}e^{p,q}(Y)x^p y^q$
the virtual Hodge polynomial of $Y$.

For $\alpha \in \NS(X)_{\Bbb Q}$,
a torsion free sheaf $E$ is $\alpha$-twisted semi-stable with
respect to $H$,
if
\begin{equation}\label{eq:twisted-stability}
\frac{\chi(F(-\alpha+nH))}{\rk F} \leq
\frac{\chi(E(-\alpha+nH))}{\rk E}\;\; (n \gg 0)
\end{equation}
for all subsheaf $F$ of $E$ \cite{MW}.
${\cal M}_H^{\alpha}(v)$ denotes the
moduli stack of $\alpha$-twisted semi-stable sheaves $E$ with $v(E)=v$,
where $H$ is the polarization.
$(H,\alpha)$ is general with respect to $v$, if
equality in \eqref{eq:twisted-stability} implies
$$
\frac{v(F)}{\rk F}=\frac{v(E)}{\rk E}.
$$
In particular, if $v$ is primitive, then ${\cal M}_H^{\alpha}(v)$
consists of $\alpha$-twisted stable objects for a general
pair $(H,\alpha)$.
If $\alpha=0$, then we write ${\cal M}_H(v)$.
Then ${\cal M}_H^{\alpha}(v)$ is described as a quotient stack 
$[Q^{ss}/GL(N)]$, where $Q^{ss}$ is a suitable open 
subscheme of $\Quot_{{\cal O}_X^{\oplus N}/X}$.
We define the virtual Hodge ``polynomial'' of ${\cal M}_H^{\alpha}(v)$
by 
\begin{equation}
e({\cal M}_H^{\alpha}(v))=
e(Q^{ss})/e(GL(N)) \in {\Bbb Q}(x,y).
\end{equation}
It is easy to see that $e(Q^{ss})/e(GL(N))$ does not depend on
the choice of $Q^{ss}$.
The following was essentially proved in \cite[Sect.\ 3.2]{chamber}
(see also \cite[Sect.\ 2.2]{Y:twist2}).
\begin{prop}\label{prop:indep}
Let $X$ be a surface such that $K_X$ is
numerically trivial.
Let $(H,\alpha)$ be a pair of ample divisor $H$
and a ${\Bbb Q}$-divisor $\alpha$.
Then
$e({\cal M}_H^{\alpha}(v))$ does not depend on the choice of
$H$ and $\alpha$, if $(H,\alpha)$ 
is general with respect to $v$.
\end{prop}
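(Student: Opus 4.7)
\smallskip

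The plan is a standard wall-and-chamber argument on the parameter space of pairs $(H,\alpha)$, with the property $K_X\equiv 0$ entering only at the single decisive step via Serre duality.

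First I would set up the chamber structure. For the fixed Mukai vector $v=(r,c_1,-\tfrac{s}{2})$, a pair $(H,\alpha)$ fails to be general exactly when there exists a saturated subsheaf $F\subset E$ with $v(E)=v$, $v(F)/\rk F\ne v(E)/\rk E$, and equality in \eqref{eq:twisted-stability}. This cuts out a locally finite union of real codimension one walls in $\Amp(X)_{\Bbb R}\times \NS(X)_{\Bbb R}$, since the possible Mukai vectors $v(F)$ of destabilizing subsheaves are constrained by Bogomolov's inequality together with the boundedness of the family ${\cal M}_H^\alpha(v)$. Within any chamber the definition of $\alpha$-twisted semi-stability is literally unchanged, so the quotient presentation $[Q^{ss}/GL(N)]$ is the same and $e({\cal M}_H^\alpha(v))$ is constant. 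It therefore suffices to compare $e$ across a single wall $W$.

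Next, I would fix two pairs $(H_\pm,\alpha_\pm)$ on opposite sides of $W$ and a pair $(H_0,\alpha_0)\in W$. Relative to $(H_0,\alpha_0)$ every object in either ${\cal M}_{H_\pm}^{\alpha_\pm}(v)$ is semi-stable, and any strictly semi-stable one has a Jordan–H\"older filtration by $(H_0,\alpha_0)$-stable factors of Mukai vectors $v_1,\ldots,v_k$ with $v_1+\cdots+v_k=v$ and proportional reduced Hilbert polynomials. I would stratify ${\cal M}_{H_\pm}^{\alpha_\pm}(v)$ by the JH type $(v_1,\ldots,v_k)$ (finitely many types by boundedness), and in each stratum push forward to the product $\prod_i{\cal M}_{H_0}^{\alpha_0}(v_i)^s$ of stable loci of the factors. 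The fiber on the $+$ side over a collection $(E_1,\ldots,E_k)$ is the space of iterated extensions in one order, and on the $-$ side in the opposite order; both are towers of Grassmannians of subspaces of $\Ext^1$-groups between the $E_i$.

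The key step, where $K_X\equiv 0$ is used, is to compare these extension towers. For distinct $(H_0,\alpha_0)$-stable factors $E_i,E_j$ of the same reduced Hilbert polynomial, stability plus $K_X\equiv 0$ gives $\Hom(E_i,E_j)=0=\Ext^2(E_i,E_j)$, and Serre duality yields $\Ext^1(E_i,E_j)\cong \Ext^1(E_j,E_i)^\vee$. Hence the two extension Grassmannians, although parametrizing opposite directions of extensions, have the same dimensions and in fact the same virtual Hodge polynomials (they are bundles of Grassmannians over the same base ${\cal M}_{H_0}^{\alpha_0}(v_\bullet)^s$ with identical rank data). Summing over JH types and using the multiplicativity of $e$ for Zariski-locally trivial fibrations with Grassmannian fibers, I obtain
\begin{equation*}
e({\cal M}_{H_+}^{\alpha_+}(v))=e({\cal M}_{H_-}^{\alpha_-}(v)).
\end{equation*}

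The main obstacle is making the stratification genuinely work at the level of stacks and $e$-polynomials: one must show that the JH strata are locally closed substacks, that the induced morphisms to $\prod_i{\cal M}_{H_0}^{\alpha_0}(v_i)^s$ fiber (on suitable atlases) as Zariski-locally trivial Grassmannian bundles, and that the formula $e(Q^{ss})/e(GL(N))$ is additive over this stratification. This is the technical content of \cite[Sect.\ 3.2]{chamber} and \cite[Sect.\ 2.2]{Y:twist2}, which the present proposition cites directly; everything else is bookkeeping and the Serre duality symmetry above.
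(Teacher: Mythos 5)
Your overall strategy---a wall-and-chamber decomposition of the space of pairs $(H,\alpha)$, with $K_X\equiv 0$ entering through Serre duality to make the two sides of a wall symmetric---is the right one, and it is essentially what the references cited for this proposition actually do (\cite[Sect.\ 3.2]{chamber}, \cite[Sect.\ 2.2]{Y:twist2}; the paper itself gives no independent proof). But your implementation of the crossing step has a genuine gap. You stratify ${\cal M}_{H_\pm}^{\alpha_\pm}(v)$ by Jordan--H\"older type with respect to the wall pair $(H_0,\alpha_0)$ and assert that the fiber over a tuple of stable factors $(E_1,\dots,E_k)$ is a tower of Grassmannians of extensions ``in one order'' on the $+$ side and ``in the opposite order'' on the $-$ side. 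This is correct only for $k=2$. For $k\ge 3$ the JH filtration is not unique, an $(H_+,\alpha_+)$-semistable sheaf with associated graded $\bigoplus_i E_i$ admits filtrations with the factors in many different orders, and the locus of such sheaves is not an iterated extension space in any single order; describing it is precisely the hard combinatorial content of wall-crossing, and your two ``towers'' are not well defined, let alone visibly of equal $e$-polynomial. There is also an Enriques-specific error: for distinct stable factors you claim $\Ext^2(E_i,E_j)=0$ and $\Ext^1(E_i,E_j)\cong\Ext^1(E_j,E_i)^{\vee}$, but Serre duality gives $\Ext^2(E_i,E_j)\cong\Hom(E_j,E_i(K_X))^{\vee}$ and $\Ext^1(E_i,E_j)\cong\Ext^1(E_j,E_i(K_X))^{\vee}$, and $E_j\cong E_i(K_X)$ with $E_i\not\cong E_j$ does occur when $K_X$ is torsion but nontrivial.

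Both problems disappear if you compare each side with the wall instead of the two sides with each other, which is the route of the cited references. Near the wall one has ${\cal M}_{H_\pm}^{\alpha_\pm}(v)\subset{\cal M}_{H_0}^{\alpha_0}(v)$, and the complement is stratified by the (unique) Harder--Narasimhan type $(v_1,\dots,v_k)$ with respect to $(H_\pm,\alpha_\pm)$. Each HN stratum is an affine fibration over the product of the moduli stacks of the factors, with $e$-polynomial $\prod_i e\bigl({\cal M}_{H_0}^{\alpha_0}(v_i)\bigr)\cdot(xy)^{\sum_{i<j}\langle v_j,v_i\rangle}$: here $\mathrm{ext}^1(F_j,F_i)-\hom(F_j,F_i)=-\chi(F_j,F_i)=\langle v_j,v_i\rangle$ because $\Ext^2(F_j,F_i)\cong\Hom(F_i,F_j(K_X))^{\vee}=0$ for $i<j$, which follows from the strict HN slope inequality together with the numerical triviality of $K_X$---no accidental isomorphism $F_i\cong F_j(K_X)$ has to be excluded. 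Since the HN types occurring on the $-$ side are exactly the reversals of those on the $+$ side and the Mukai pairing is symmetric, the two correction terms coincide and $e({\cal M}_{H_+}^{\alpha_+}(v))=e({\cal M}_{H_-}^{\alpha_-}(v))$. If you wish to keep the JH picture you would have to restrict to walls of JH length $2$ (which is not always possible) or invoke the full wall-crossing machinery; the HN comparison with the wall is the elementary argument the proposition is actually relying on.
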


\begin{NB}
The same claim holds if we specified the determinant line bundle
$L$.
\end{NB}

By using a special kind of Fourier-Mukai transform
called $(-1)$-reflection and using Proposition \ref{prop:indep}, 
we get the following result.

\begin{prop}[{\cite[Prop.\ 4.5]{Y:twist1}}]\label{prop:hodge-enriques}
Let $X$ be an unnodal Enriques surface.
Assume that $r,s>0$.
Then
\begin{enumerate}
\item[(1)]
$$
e\left({\cal M}_H^{\alpha}\left(r,c_1,-\tfrac{s}{2}\right)\right)=
e\left({\cal M}_H^{\alpha}\left(s,-c_1,-\tfrac{r}{2}\right)\right)
$$
for a general $(H,\alpha)$,
if $(c_1^2)<0$, i.e, $\langle v^2 \rangle<rs$,
where $v=(r,c_1,-\tfrac{s}{2})$.
In particular, if $r> \langle v^2 \rangle$, then
we get our claim.
\item[(2)]
If we specify the first Chern class as an element of
$\Pic(X) \cong \NS(X)$,  then we also have
$$
e\left({\cal M}_H^{\alpha}
\left(r,L+\tfrac{r}{2}K_X,-\tfrac{s}{2}\right)\right)=
e\left({\cal M}_H^{\alpha}
\left(s,-(L+\tfrac{s}{2}K_X),-\tfrac{r}{2}\right)\right)
$$
for a general $(H,\alpha)$,
if $(c_1(L)^2)<0$, i.e, $\langle v^2 \rangle<rs$,
where $v=(r,c_1(L),-\tfrac{s}{2})$.
\end{enumerate}
\end{prop}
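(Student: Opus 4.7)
The plan is to implement the Mukai-vector map $v \mapsto v'$ by a $(-1)$-reflection---a Fourier--Mukai type transform $\Phi \colon D(X) \to D(X)$ built from the exceptional sheaf $\mathcal{O}_X$---and to conclude by invariance of the virtual Hodge polynomial. On the unnodal Enriques surface $X$ we have $\chi(\mathcal{O}_X) = 1$ and $h^i(\mathcal{O}_X) = 0$ for $i > 0$, so the class $v_0 := v(\mathcal{O}_X) = (1, 0, \tfrac{1}{2})$ satisfies $\langle v_0, v_0 \rangle = -1$ and is therefore exceptional in the Mukai lattice. A direct Mukai-pairing computation gives $\langle v, v_0 \rangle = (s-r)/2$, so the lattice reflection $R_{v_0}(v) := v + 2\langle v, v_0 \rangle v_0$ equals $(s, c_1, -\tfrac{r}{2})$; composing with the Mukai dualization $w \mapsto w^{\vee}$ then yields $v' = (s, -c_1, -\tfrac{r}{2})$. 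For part~(2), the same computation carried out over $\NS(X)$ (rather than $\NSf(X)$), together with $2K_X = 0$, accounts for the $K_X$-components $\tfrac{r}{2} K_X$ and $\tfrac{s}{2} K_X$ in the specified first Chern classes.

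Next I would lift this lattice picture to a functor. Using the exceptional $\mathcal{O}_X$, form the left mutation
\[
L_{\mathcal{O}_X}(E) := \Cone\bigl(R\Hom(\mathcal{O}_X, E) \otimes \mathcal{O}_X \longrightarrow E\bigr)
\]
and set $\Phi(E) := R\mathcal{H}om(L_{\mathcal{O}_X}(E), \mathcal{O}_X)$; Grothendieck--Riemann--Roch gives $v(\Phi(E)) = v'$ whenever $\Phi(E)$ is cohomologically concentrated in a single degree. Proposition~\ref{prop:indep} allows me to move $(H, \alpha)$ freely within general pairs, so I would choose a chamber in which the twisted slope of $\mathcal{O}_X$ is separated from that of every $\alpha$-twisted semistable $E \in \mathcal{M}_H^{\alpha}(v)$ so that $\alpha$-twisted stability of $E$ forces $\Ext^i(\mathcal{O}_X, E) = 0$ outside a single degree. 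The hypothesis $(c_1^2) < 0$, i.e.\ $\langle v^2 \rangle < rs$, enters precisely here: it guarantees that such a chamber exists and that $L_{\mathcal{O}_X}(E)$ is a genuine torsion-free sheaf of rank $s$ rather than a nontrivial complex.

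A parallel analysis applied to $\Phi^{-1}$ shows that $\Phi$ induces an isomorphism between open substacks of $\mathcal{M}_H^{\alpha}(v)$ and $\mathcal{M}_H^{\alpha'}(v')$ whose complements have strictly smaller dimension. Additivity of $e(\cdot)$ on locally closed stratifications then yields $e(\mathcal{M}_H^{\alpha}(v)) = e(\mathcal{M}_H^{\alpha'}(v'))$, and a further appeal to Proposition~\ref{prop:indep} replaces $\alpha'$ by any desired general $\alpha$. The main obstacle is the middle step: verifying the $\Ext$-vanishings for every $\alpha$-twisted stable $E$ with $v(E) = v$, and then showing that $\Phi(E)$ is itself $\alpha'$-twisted stable---so that destabilizing subsheaves of $\Phi(E)$, pulled back through $\Phi^{-1}$, yield forbidden subsheaves or quotients of $E$. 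This slope arithmetic relies crucially on the negativity of $(c_1^2)$. Part~(2) follows from the same argument while tracking $\det E \in \Pic(X)$ through the mutation and dualization.
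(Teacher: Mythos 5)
Your overall strategy coincides with the paper's: the $(-1)$-reflection associated to the exceptional sheaf $\mathcal{O}_X$ (left mutation followed by dualization) realizes the lattice map $(r,c_1,-\tfrac{s}{2})\mapsto(s,-c_1,-\tfrac{r}{2})$, and Proposition \ref{prop:indep} lets one work with a convenient pair $(H,\alpha)$; your Mukai-lattice computation is correct. But two points need repair. First, the role of $(c_1^2)<0$ is more specific than ``a suitable chamber exists'': by the Hodge index theorem the orthogonal complement of $c_1$ in $\NS(X)$ contains a class $H$ with $(H^2)>0$, which may be taken effective by Riemann--Roch and is then ample \emph{because $X$ is unnodal} --- this is where unnodality enters, and your proposal never invokes it. With $(H,c_1)=0$, every $\alpha$-twisted stable $E$ with $v(E)=v$ has the same $\mu$-slope as $\mathcal{O}_X$, and a small generic twist $\varepsilon$ forces $\Hom(\mathcal{O}_X,E)=\Hom(E,\mathcal{O}_X)=\Hom(E,\mathcal{O}_X(K_X))=0$, so that $\WIT_1$ holds for \emph{every} point of the moduli stack, not merely on a dense open subset.

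Second, and more seriously, your concluding step is invalid as stated: an isomorphism between open substacks whose complements have strictly smaller dimension does \emph{not} imply equality of virtual Hodge polynomials. By additivity, $e(\mathcal{M}_1)-e(\mathcal{M}_2)=e(Z_1)-e(Z_2)$, where $Z_1,Z_2$ are the complements, and their lower dimension gives no control over this difference ($e$ is not a leading-order invariant). The paper's argument avoids this entirely: for the degree-zero polarization above and the generic twist $\varepsilon$, the reflection induces an isomorphism of the \emph{whole} stacks $\mathcal{M}_{H}^{\mathcal{O}_X+\varepsilon}(v)^{ss}\to\mathcal{M}_{H}^{\mathcal{O}_X+\widehat{\varepsilon}}(v')^{ss}$ (the hypotheses $r,s>0$ guarantee that both sides are stacks of sheaves rather than of complexes), and only afterwards does one invoke Proposition \ref{prop:indep} to pass back to an arbitrary general $(H,\alpha)$. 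You would need to upgrade your open-substack isomorphism to an isomorphism of the entire stacks, which is precisely what the normalization $(H,c_1)=0$ achieves.
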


\begin{rem}
\begin{enumerate}
\item[(1)]
For the proof of Proposition \ref{prop:hodge-enriques} (2),
we use the description of the $(-1)$-reflection 
as a Fourier-Mukai transform (see Appendix).
Then the first Chern class $L+\frac{r}{2}K_X$
is replaced by $-[(L+\tfrac{r}{2}K_X)+\langle v,v(K_X) \rangle K_X]=
-(L+\tfrac{s}{2}K_X)$.
\item[(2)]
The same claim also holds for nodal case (see Appendix).
\end{enumerate}
\end{rem}
 
\begin{NB}
\begin{proof}
If $(c_1^2)<0$, then
the Hodge index theorem implies that
there is a divisor $H$ such that
$(H,c_1)=0$ and $({H}^2)>0$. By the Riemann-Roch theorem,
we may assume that $H$ is effective. 
Since $X$ is unnodal, $H$ is ample.   
If $E_0={\cal O}_X$, then
$v({\cal E}_{|\{x \} \times X})=2$.
Hence $v$ satisfies assumptions of Proposition \ref{prop:00/08/17}.
Then we get an isomorphism 
\begin{equation}
 {\cal M}_{H}^{{\cal O}_X+\varepsilon}(r+c_1-(s/2) \varrho_X) \to
 {\cal M}_{H}^{{\cal O}_X+\varepsilon}(s-c_1-(r/2) \varrho_X),
\end{equation}
where $(H,{\cal O}_X+\varepsilon)$ is general with respect to $v$.
By Proposition \ref{prop:indep}, we get our claim.
\end{proof}
\end{NB}

\subsection{Reduction to the rank 2 case}\label{subsect:reduction}

From Subsection \ref{subsect:reduction} to
Subsection \ref{subsect:unnodal:rank0},
we assume that $X$ is an unnodal Enriques surface and
$r$ is even (and hence $s$ is also even). 
We also assume that $\alpha=0$, that is,
we consider the moduli stack of ordinary Gieseker semi-stable sheaves
${\cal M}_H(v)$.
We shall prove the following result in this subsection.

\begin{thm}\label{thm:e-poly}
Let $v=(r,c_1,-\frac{s}{2})$ be a primitive Mukai
vector such that $r>0$ is even.
\begin{enumerate}
\item[(1)]
If $\gcd(r,c_1,s)=1$, then
$e({\cal M}_H(r,c_1,-\tfrac{s}{2}))=e({\cal M}_H(2,\xi,-\frac{s'}{2}))$
for a general $H$,
where $\xi$ is a primitive element of $\NSf(X)$ and
$(\xi^2)+2s'=(c_1^2)+rs$.
\item[(2)]
If $\gcd(r,c_1,s)=2$, then
$e({\cal M}_H(r,c_1,-\tfrac{s}{2}))=e({\cal M}_H(2,0,-\frac{s'}{2}))$
for a general $H$,
where $2s'=(c_1^2)+rs$.
\end{enumerate}
\end{thm}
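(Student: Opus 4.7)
The strategy is a Euclidean-type rank reduction iterating two operations that preserve the virtual Hodge ``polynomial'': (a) tensoring by a line bundle $\mathcal{O}_X(D)$, which induces an isomorphism of moduli stacks sending $v = (r, c_1, -s/2)$ to $v' = (r, c_1 + rD, -s'/2)$ with $s' = s - 2\,c_1 \cdot D - rD^2$, preserving both $\langle v^2 \rangle = (c_1^2) + rs$ and primitivity of $v$; and (b) the $(-1)$-reflection of Proposition \ref{prop:hodge-enriques}, which swaps $r$ and $s$ (and negates $c_1$) whenever the first Chern class has negative self-intersection, after passing to a general polarization via Proposition \ref{prop:indep}.

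Given $v = (r, c_1, -s/2)$ with $r \geq 4$ even, the inductive step is to choose $D \in \NS(X)$ so that simultaneously $(c_1 + rD)^2 < 0$ and $0 < s' < r$; then the reflection yields $e(\mathcal{M}_H(v)) = e(\mathcal{M}_H(s', -(c_1+rD), -r/2))$ with smaller rank $s'$. Such a $D$ exists because, for an unnodal Enriques surface, $\NSf(X) \cong U \oplus E_8(-1)$, and the hyperbolic summand $U$ provides enough freedom: writing $D = x\bl{e} + y\bl{f}$ for a standard basis of $U$ gives $rD^2 = 2rxy$, while the linear form $c_1 \cdot D$ depends independently on $(x, y)$, so one can simultaneously control the sign of $(c_1 + rD)^2$ and the residue of $s'$ modulo $r$. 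Iterating terminates at rank $2$.

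To pin down the precise form at the terminal stage, I would track the invariant $\ell = \gcd(r, c_1, s) \in \{1, 2\}$ (Lemma \ref{lem:Hauzer}), which is preserved by both operations. In case (1) ($\ell = 1$), the terminal Mukai vector $(2, c_1', -s'/2)$ satisfies $\gcd(2, c_1', s') = 1$, so $c_1' = k\xi$ with $\xi$ primitive in $\NSf(X)$ and $k$ odd; a final tensor product by a line bundle in the direction of $\xi$ replaces $c_1'$ by the primitive class $\xi$, giving $(2, \xi, -s''/2)$. In case (2) ($\ell = 2$), the terminal $c_1' \in 2\NSf(X)$, say $c_1' = 2\eta$, and tensoring by $\mathcal{O}_X(-\eta)$ clears the first Chern class, yielding $(2, 0, -s''/2)$.

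The main obstacle, and the improvement over Hauzer's argument, is ensuring that the rank drops strictly below $4$ rather than stalling there: Hauzer could only guarantee $s' \leq r$, which is insufficient at $r = 4$. The refined choice of $D$ using the hyperbolic plane in $\NSf(X)$, combined with the primitivity of $v$, is exactly what permits $s' = 2$ in both parity cases. A secondary technical point is that the condition $(c_1 + rD)^2 < 0$ required by Proposition \ref{prop:hodge-enriques} must be compatible with the desired value of $s'$; this is settled by the Hodge index theorem applied to $\NSf(X)$, whose signature $(1, 9)$ ensures that large-negative-square vectors exist in every relevant residue class modulo the sublattice governing $s'$.
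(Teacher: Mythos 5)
Your overall framework --- alternating line-bundle twists with the $(-1)$-reflection of Proposition \ref{prop:hodge-enriques}, tracking $\ell=\gcd(r,c_1,s)$, and inducting on the rank --- is the same as the paper's, and your terminal normalizations at rank $2$ (replacing $k\xi$, $k$ odd, by $\xi$, and $2\eta$ by $0$) are fine. But the core inductive step does not work as stated. You propose to choose a single $D$ with $(c_1+rD)^2<0$ and $0<s'<r$ and then reflect once. Since both operations preserve $\langle v^2\rangle=(c_1^2)+rs$, the twisted vector satisfies $(c_1+rD)^2=\langle v^2\rangle-rs'$, so the two conditions you impose are not independent: together they force $\langle v^2\rangle/r<s'<r$, hence $\langle v^2\rangle<r^2$. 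For a primitive $v$ with $\langle v^2\rangle\geq r^2$ no admissible $s'$ exists, no matter how $D$ is chosen in the hyperbolic plane; the Hodge index theorem cannot help because $(c_1+rD)^2$ is completely determined by $s'$. Already at the last step $r=4\to 2$ one would need $s'=2$ and $(c_1+4D)^2=\langle v^2\rangle-8<0$, which fails for every $v$ with $\langle v^2\rangle\geq 8$ --- i.e.\ for all but finitely many cases, and precisely the ones where the theorem has content.

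The paper circumvents this with a two-reflection maneuver: first twist so that $s>\langle v^2\rangle$ and reflect to $(s,-c_1,-\tfrac{r}{2})$; on this reflected vector a twist by $kf$ changes the third component by $2k(c_1,f)=2kd_1$, so after normalizing $|d_1|\le \tfrac{r}{2}$ one reaches $r'=r+2d_1k$ with $0<r'\le 2|d_1|<r$; a second reflection (legitimate because the inflated $s$ makes the relevant self-intersection negative, namely $\langle v^2\rangle<sr'$) returns a vector of rank $r'<r$. In other words the rank is decreased while sitting in the \emph{Euler-characteristic slot} of the reflected vector, where it moves in steps of $2(c_1,f)$, rather than in the $s'$-slot of the original, where it is pinned by $\langle v^2\rangle$. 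You would then still need the separate treatment of the degenerate residues $d_1\equiv 0,\tfrac{r}{2}$ (and likewise $d_2$), for which the paper uses Lemmas \ref{lem:reduction1}--\ref{lem:reduction2} and the change of hyperbolic basis $\sigma'=\sigma-\tfrac{(\eta^2)}{2}f+\eta$; your appeal to ``residue classes modulo the sublattice governing $s'$'' does not substitute for this, since $s'$ only moves in the fixed coset $s+2\gcd(\mathrm{div}(c_1),r){\Bbb Z}$.
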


For the proof of this result,
we shall slightly improve Hauzer's argument.
Let ${\Bbb Z}\sigma+{\Bbb Z}f$ be a hyperbolic lattice
in $\NS(X)$: 
$$
(\sigma^2)=(f^2)=1,\;(\sigma,f)=1.
$$
The main difference of \cite{Y:twist1} and \cite{Hauzer} is the case 
${\cal M}_H(r,c_1,-\frac{s}{2})$ such that $r$ is even and
$c_1=\frac{r}{2}bf+\frac{r}{2}b' \sigma+\xi$, $b,b'=0,1$,
$\xi \in E_8(-1)$.
In order to treat this case, we shall modify 
the argument in \cite{Hauzer}.  
For a primitive Mukai vector
$(r,\frac{r}{2}bf+\xi,-\frac{s}{2})$ ($b =0,-1,1, \xi \in E_8(-1)$),
\cite[Cor. 2.6] {Hauzer} implies that 
$\gcd(r,\xi,s)=1,2$.
Indeed $1=\gcd(r,\frac{r}{2}bf+\xi,\frac{r-s}{2})=
\gcd(\frac{r}{2},\frac{s}{2},\xi)$ implies 
$\gcd(r,\xi,s)=1,2$.
\begin{NB}
$\gcd(r,\frac{r}{2}bf+\xi,\frac{r-s}{2})=1$ implies
$\gcd(\frac{r}{2},\frac{s}{2},\xi)=1$.
\end{NB}
\begin{NB}
$\gcd(r,\xi,s)=1$ if $2 \nmid \xi$ and 
$\gcd(r,\xi,a)=2$ if $2 \mod \xi$.
\end{NB}

\begin{lem}\label{lem:reduction1}
For a primitive Mukai vector
$v=(r,\frac{r}{2}bf+\xi,-\frac{s}{2})$ ($b =0,-1,1, \xi \in E_8(-1)$),
we set $l:=\gcd(r,\xi,s)$.
\begin{enumerate}
\item
[(1)]
$e({\cal M}_H(r,\frac{r}{2}bf+\xi,-\frac{s}{2}))=
e({\cal M}_H(r',\frac{r}{2}bf+\xi',-\frac{s'}{2}))$ for a general $H$,
where $r' \equiv r \mod 2l$, $s' \equiv s \mod 2l$,
$l=\gcd(r',\xi',s')$,
$\xi'/l \in E_8(-1)$ is primitive and
$r's' \geq r' >\langle v^2 \rangle$. 
\item
[(2)]
$e({\cal M}_H(r,\frac{r}{2}bf+\xi,-\frac{s}{2}))=
e({\cal M}_H(s'',-(\frac{r}{2}bf+\xi''),-\frac{r'}{2}))$ for a general $H$,
where $r' \equiv r \mod 2l$, $s'' \equiv s \mod 2l$,
$l=\gcd(s'',\xi'',r')$,
$\xi''/l \in E_8(-1)$ is primitive
and
$r's'' \geq s'' >\langle v^2 \rangle$. 
\end{enumerate}
\end{lem}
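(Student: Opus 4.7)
The idea is to apply a sequence of operations, each of which preserves $e({\cal M}_H(\cdot))$, to move the Mukai vector into the desired form. Two operations suffice: tensoring by a line bundle (which induces an isomorphism of moduli stacks and so preserves $e$, modulo a harmless shift of the polarization handled by Proposition \ref{prop:indep}), and the $(-1)$-reflection of Proposition \ref{prop:hodge-enriques}(2), which exchanges the roles of $r$ and $s$ whenever $(c_1^2)<0$.

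First I would tensor by ${\cal O}_X(D)$ for some $D\in E_8(-1)\subset \NS(X)$. Because $E_8(-1)$ is orthogonal to $\bb{Z}\sigma\oplus\bb{Z}f$, this preserves the rank $r$ and the $f$-coefficient $\tfrac{r}{2}b$ of $c_1$, while sending $\xi\mapsto\xi+rD$ and $s\mapsto s-2(\xi,D)-r(D^2)$. Since $l\mid r$, $l\mid\xi$, and $(D^2)\in 2\bb{Z}$ ($E_8$ being even), both $r$ and $s$ remain in their original residue classes mod $2l$, and the gcd $l$ is preserved. A suitable choice of $D$ makes $(\xi+rD)/l\in E_8(-1)$ primitive; this is possible by a genericity argument in the rank-$8$ lattice $E_8(-1)$, using $\gcd(r,\xi,s)=l$. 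Enlarging $D$ further simultaneously forces $(c_1^2)\ll 0$ and arranges $s$ to be large within its mod-$2l$ class. Once $(c_1^2)<0$ is secured, I apply the $(-1)$-reflection to swap $r$ and $s$; tracking the $K_X$-correction in Proposition \ref{prop:hodge-enriques}(2), the new first Chern class is again of the prescribed form $\tfrac{r}{2}bf+\xi''$ with $\xi''\in E_8(-1)$. Iterating this tensor--reflect cycle drives $r'$ arbitrarily large while holding all congruences in place, yielding (1). Statement (2) is obtained by stopping after one additional reflection, producing the transposed vector $(s'',-(\tfrac{r}{2}bf+\xi''),-\tfrac{r'}{2})$.

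The main obstacle is ensuring that \emph{both} congruences $r'\equiv r$ and $s'\equiv s\pmod{2l}$ survive the reflection, which naively requires $r\equiv s\pmod{2l}$. This is automatic when $l=1$ (both $r,s$ are even), but for $l=2$ Lemma \ref{lem:Hauzer}(2) gives $r+s\equiv 2\pmod 4$, so $r\not\equiv s\pmod 4$. To circumvent this, I plan to perform an \emph{even} number of reflections in part (1), interspersed with tensorings, whose net effect increases $r$ by a multiple of $2l$ without disturbing $s\pmod{2l}$. For (2), the parity of reflections is odd, so the obstruction manifests as the transposition $r\leftrightarrow s$ in the congruences --- which is precisely what the statement asserts. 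The most delicate bookkeeping lies in verifying these congruences (including the effect of the $K_X$-correction in the reflection) through each step while simultaneously maintaining the constraint that the $f$-component of $c_1$ remains exactly $\tfrac{r}{2}b$.
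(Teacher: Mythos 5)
Your overall strategy --- alternating tensorization by line bundles with first Chern class in $E_8(-1)$ with the $(-1)$-reflection of Proposition \ref{prop:hodge-enriques}, arranging $(c_1^2)<0$ before each reflection, and using an even number of reflections for (1) versus an odd number for (2) so that the congruences modulo $2l$ come out as stated --- is exactly the paper's, and your congruence bookkeeping (evenness of $E_8(-1)$, $l\mid r$, $l\mid\xi$, the swap $r\leftrightarrow s$ under reflection) is correct. However, one step fails as written: you claim that a suitable $D\in E_8(-1)$ makes $(\xi+rD)/l$ primitive already at the first tensoring. Since $\xi+rD\equiv\xi \pmod{rE_8(-1)}$, the divisibility index of $\xi+rD$ is always a multiple of $p:=\gcd(r,\xi)$, and the best achievable is exactly $p$; but $p$ can be strictly larger than $l=\gcd(p,s)$. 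For instance $v=(4,4\xi_0,-\tfrac{s}{2})$ with $\xi_0\in E_8(-1)$ primitive and $s\equiv 2\pmod 4$ is a primitive Mukai vector with $l=2$, yet $(\xi+4D)/2=2(\xi_0+D)$ is never primitive. So the genericity argument you invoke cannot deliver the normalization you need at that stage.

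The repair is the two-stage normalization the paper carries out: first choose $D$ so that $\xi_1/p$ is primitive (this \emph{is} achievable, $p$ being exactly the obstruction modulo $r$) and $s'>\langle v^2\rangle$; after reflecting, the rank becomes $s'$, and since $s'\equiv s\pmod p$ one has $\gcd(s',\xi_1)=\gcd(s',p)=\gcd(s,p)=l$, so a second tensoring by some $D_1$ can now make $\xi'/l$ primitive while forcing $r'>\langle v^2\rangle$, and a second reflection completes (1); a third tensor-and-reflect gives (2). The gap is therefore local and repairable within your framework, but the primitivity claim in your first step is false whenever $\gcd(r,\xi)\nmid s$. (A minor further remark: the lemma is stated in $\NSf(X)$, so Proposition \ref{prop:hodge-enriques}(1) suffices and no $K_X$-correction needs to be tracked.)
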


\begin{proof}
We first note that the choice of $H$ is not important by
Proposition \ref{prop:indep}.
So we do not explain about the choice of $H$.
(1)
We set $p:=(r,\xi)$.
For  $v=(r,\frac{r}{2}bf+\xi,-\frac{s}{2})$,
we take $D \in E_8(-1)$ such that 
$ve^D=(r,\frac{r}{2}bf+\xi_1,-\frac{s'}{2})$
satisfies 
$\xi_1/p$ is primitive and
$s'>\langle v^2 \rangle$.
Since $s'=s-2(\xi,D)-r(D^2)$,
$s' \equiv s \mod 2l$.
By Proposition \ref{prop:hodge-enriques},
$e({\cal M}_H(v))=e({\cal M}_H(s',-(\frac{r}{2}bf+\xi_1),-\frac{r}{2}))$.
\begin{NB}
Since $p|\xi$ and $p|r$,
$(s',p)=(s-2(\xi,D)-r(D^2),p)=(s,p)=l$. 
\end{NB}
Since $l=(s',p)$,
we take $D_1 \in E_8(-1)$ such that
$(s',-(\frac{r}{2}bf+\xi_1),-\frac{r}{2}) e^{D_1}
=(s',-(\frac{r}{2}bf+\xi'),-\frac{r'}{2})$
satisfies $\xi'/l$ is primitive and
$r'>\langle v^2 \rangle$.
We also have $r'=r+2(\xi_1,D_1)-s'(D_1^2)
 \equiv r \mod 2l$.
Applying Proposition \ref{prop:hodge-enriques}, we have
$$
e\left({\cal M}_H 
\left(s',-(\tfrac{r}{2}bf+\xi_1),-\tfrac{r}{2}\right)\right)=
e\left({\cal M}_H 
\left(r',\tfrac{r}{2}bf+\xi',-\tfrac{s'}{2}\right)\right).
$$
(2)
For $(r',\frac{r}{2}bf+\xi',-\frac{s'}{2})$ in (1),
we take $D_2 \in E_8(-1)$ such that
$(r',\tfrac{r}{2}bf+\xi'',-\tfrac{s''}{2})=
(r',\tfrac{r}{2}bf+\xi',-\tfrac{s'}{2})e^{D_2}$
satisfies 
$\xi''/l \in E_8(-1)$ is primitive,
$s''>\langle v^2 \rangle$.
Then we have 
$$
e\left({\cal M}_H
\left(r',\tfrac{r}{2}bf+\xi',-\tfrac{s'}{2}\right)\right)
=e\left({\cal M}_H
\left(s'',-(\tfrac{r}{2}bf+\xi''),-\tfrac{r'}{2}\right)\right)
$$
by Proposition \ref{prop:hodge-enriques}.
\end{proof}

\begin{lem}\label{lem:reduction2}
For a primitive Mukai vector
$v=(r,\frac{r}{2}bf+\xi,-\frac{s}{2})$ ($b =0,-1,1, \xi \in E_8(-1)$),
there exist some zeta and $t$ such that
$$
e\left({\cal M}_H
\left(r,\tfrac{r}{2}bf+\xi,-\tfrac{s}{2}\right)\right)=
e\left({\cal M}_H
\left(2,\zeta,-\tfrac{t}{2}\right)\right)
$$
for a general $H$.
\end{lem}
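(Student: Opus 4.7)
The plan is to extend the shift-and-reflect cycle of Lemma~\ref{lem:reduction1} by one more step, choosing the final $e^D$-shift so that the resulting $\chi$-component becomes $-1$ (making the new ``$s$''-value equal $2$). A subsequent $(-1)$-reflection via Proposition~\ref{prop:hodge-enriques} then swaps rank and $s$, giving a rank-$2$ vector equivalent to $v$.

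Concretely, I would first invoke Lemma~\ref{lem:reduction1}~(1) to replace $v$ by an equivalent vector $v_1 = (r_1, \tfrac{r}{2}bf+\xi_1, -\tfrac{s_1}{2})$ with $\xi_1/l$ primitive, $r_1 \equiv r \pmod{2l}$, $s_1 \equiv s \pmod{2l}$, and $r_1 > \langle v^2 \rangle$. Then I would seek $D \in \NS(X)$ solving $s_1 - 2 = 2(\tfrac{r}{2}bf + \xi_1, D) + r_1 D^2$, so that the shifted vector $v_1 \cdot e^D = (r_1, \tfrac{r}{2}bf + \xi_1 + r_1 D, -1)$ has $\chi$-component $-1$. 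Writing $D = D_\sigma \sigma + D_f f + D_8$ with $D_8 \in E_8(-1)$, the achievable shift-difference is $r_1 b D_\sigma + 2 r_1 D_f D_\sigma + 2(\xi_1, D_8) + r_1 D_8^2$; using the hyperbolic pair $\{\sigma, f\}$ together with the unimodularity of $E_8(-1)$, the set of such values forms the sublattice $2l\mathbb{Z} \subset 2\mathbb{Z}$. Finally the $(-1)$-reflection sends $(r_1, \cdot, -1)$ to $(2, \zeta, -\tfrac{r_1}{2})$ and is valid since the transformed $(c_1')^2 = \langle v^2 \rangle - 2 r_1 < 0$ by our choice of $r_1$.

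For the case analysis: when $l = 1$ (so $s$ is even) or $l = 2$ with $s \equiv 2 \pmod 4$, we have $s_1 \equiv 2 \pmod{2l}$ and the shift equation is solvable. In the remaining case $l = 2$, $s \equiv 0 \pmod 4$, Lemma~\ref{lem:Hauzer}~(2)(b) forces $r \equiv 2 \pmod 4$; one then applies Lemma~\ref{lem:reduction1}~(2) instead of (1) at the start, which swaps the roles of $r$ and $s$ via reflection, so that the new ``$s$''-value is $r' \equiv 2 \pmod 4$, reducing to the solvable case.

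The hard part will be the lattice computation showing that $s_1 - 2 \in 2l\mathbb{Z}$ coincides with the set of attainable shift-differences $\{2(c_1, D) + r_1 D^2 : D \in \NS(X)\}$, along with verifying that the final reflection is applicable. This is a refinement of the argument in the proof of Lemma~\ref{lem:reduction1}, but there one had the freedom to pick the target value arbitrarily large in a fixed residue class, whereas here the target ``$s$''-value is pinned down to the specific small number $2$, so a more delicate choice of $D$ is required.
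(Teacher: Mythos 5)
Your overall architecture is the same as the paper's: twist by $e^D$ to force the $\chi$-component to $-1$, then apply the $(-1)$-reflection of Proposition \ref{prop:hodge-enriques} to land in rank $2$. But the ``hard part'' you defer --- that the attainable shift-differences $\{2(c_1,D)+r_1(D^2): D\in \NS(X)\}$ form the sublattice $2l{\Bbb Z}$ --- is false, and the error propagates into your case analysis. Writing $c_1=\tfrac{r}{2}bf+\xi_1$ and $D=D_\sigma\sigma+D_ff+D_8$, the shift is $rbD_\sigma+2(\xi_1,D_8)+2r_1D_\sigma D_f+r_1(D_8^2)$ (note the first term carries the \emph{original} $r$, not $r_1$, though this only matters up to $2l$). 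When $l=2$, $b=\pm1$ and $r\equiv 2\bmod 4$ --- a legitimate primitive case, since then $\tfrac{r}{2}b$ is odd and $\gcd(r,c_1,s)=1$ even though $\gcd(r,\xi,s)=2$ --- the term $rbD_\sigma\equiv 2D_\sigma\bmod 4$ puts values $\equiv 2\bmod 4$ in the image, so the image is not contained in $2l{\Bbb Z}=4{\Bbb Z}$. Moreover the image of the full quadratic map is not a lattice at all; to get a clean coset one must restrict to isotropic $D$ (e.g.\ $D=\sigma-\tfrac{(\eta^2)}{2}f+\eta$, which is exactly what the paper does), and then the attainable set is $rb+2l{\Bbb Z}$, not $2l{\Bbb Z}$.

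This sinks your third case. For $l=2$, $s\equiv 0\bmod 4$ (hence $r\equiv 2\bmod 4$) you route everything through Lemma \ref{lem:reduction1}~(2). After the swap the first Chern class is $-(\tfrac{r}{2}bf+\xi'')$ with $\tfrac{r}{2}b$ still odd when $b=\pm 1$, the new target is $r'-2\equiv 0\bmod 4$, but the attainable coset (isotropic $D$, $D_\sigma=1$) is $-rb+4{\Bbb Z}=2+4{\Bbb Z}$: the swap makes the equation \emph{unsolvable} by this method. The correct dichotomy, which is the one the paper uses, is finer: when $r\equiv 2\bmod 4$ and $b=\pm1$ (or $l=1$) you must \emph{not} swap --- there the unswapped target $s_1-2\equiv 2\equiv rb\bmod 4$ lies in $rb+2l{\Bbb Z}$, and one solves $(\tfrac{r}{2}bf+\xi',D)=1$ with $(D^2)=0$ and twists by $(\tfrac{s'}{2}-1)D$; the swap via Lemma \ref{lem:reduction1}~(2) is reserved for the single sub-case $b=0$, $l=2$, $r\equiv 2\bmod 4$, where the direct equation is genuinely obstructed. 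Your cases with $l=1$, and with $l=2$, $r\equiv 0\bmod 4$, do go through (there $rbD_\sigma\in 2l{\Bbb Z}$ and the isotropic trick yields all of $2l{\Bbb Z}$), as does your verification that the final reflection applies; the gap is confined to, but real in, the sub-case $l=2$, $b=\pm1$, $r\equiv2\bmod4$.
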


\begin{proof}
(1)
We first assume that $r \equiv 0 \mod 4$ and
$s \equiv 2 \mod 4$.
By Lemma \ref{lem:reduction1}, 
we have
$$
e\left({\cal M}_H
\left(r,\tfrac{r}{2}bf+\xi,-\tfrac{s}{2}\right)\right)=
e\left({\cal M}_H
\left(r',\tfrac{r}{2}bf+\xi',-\tfrac{s'}{2}\right)\right)
$$
for a general $H$,
where $r' \equiv 0 \mod 2l$,
$s' \equiv 2 \mod 2l$, $\xi'/l \in E_8(-1)$ is primitive
and
$r' >\langle v^2 \rangle$.
For $\eta \in E_8(-1)$, we set
$D:=\sigma-\frac{(\eta^2)}{2}f+\eta$.
Then $(D^2)=0$.
Since $r \equiv 0 \mod 2l$,
we can choose $\eta$ such that
\begin{equation}
s'-rb-2=2(\xi',\eta).
\end{equation}
\begin{NB}
$\{(\xi',\eta) \mid \eta \in E_8(-1) \}={\Bbb Z}l$.
\end{NB}
Then
$(\tfrac{r}{2}bf+\xi',D)=\tfrac{r}{2}b+(\xi',\eta)=\tfrac{s'}{2}-1$
and
\begin{equation}
\begin{split}
\left(r',\tfrac{r}{2}bf+\xi',-\tfrac{s'}{2}\right)e^D=&
\left(r',\tfrac{r}{2}bf+\xi'+r'D,
-\tfrac{s'-2(\frac{r}{2}bf+\xi',D)}{2}\right)\\
=& \left(r',\tfrac{r}{2}bf+\xi'+r'D,-1 \right).
\end{split}
\end{equation}
Hence 
$$
e\left({\cal M}_H
\left(r',\tfrac{r}{2}bf+\xi',-\tfrac{s'}{2}\right)\right)=
e\left({\cal M}_H
\left(2,\zeta,-\tfrac{r'}{2}\right)\right)
$$
for a general $H$,
where $\zeta=-(\tfrac{r}{2}bf+\xi'+r'D )$.

(2)
We next assume that $r \equiv 2 \mod 4$.
If $b=0$ and $l=2$, then by using Lemma \ref{lem:reduction1} (2),
we have
$$
e\left({\cal M}_H \left(r,\xi,-\tfrac{s}{2} \right)\right)=
e\left({\cal M}_H \left(s'',-\xi'',-\tfrac{r'}{2}\right)\right)
$$
 for a general $H$.
Since $r' \equiv 2 \mod 2l$,  it is reduced to the case (1).

Assume that $b=\pm 1$ or $l=1$.
By Lemma \ref{lem:reduction1} (1),
we have
$$
e\left({\cal M}_H \left(r,\tfrac{r}{2}bf+\xi,-\tfrac{s}{2}\right)\right)=
e\left({\cal M}_H \left(r',\tfrac{r}{2}bf+\xi',-\tfrac{s'}{2}\right)\right)
$$
for a general $H$,
where $r' \equiv 2 \mod 2l$, $\xi'/l$ is primitive and 
$r' >\langle v^2 \rangle$.
Since $\xi'/l$ is primitive and $\frac{r}{2}$ is odd,
we take $\eta \in E_8(-1)$ such that
$\frac{r}{2}b+(\xi',\eta)=1$.
\begin{NB}
If $l=1$, then $\xi'$ is primitive.
If $l=2$, then $b\frac{r}{2}$ is odd.
\end{NB}
We set $D:=\sigma-\tfrac{(\eta^2)}{2}f+\eta$.
Then $(D,\tfrac{r}{2}bf+\xi')=\frac{r}{2}b+(\xi',\eta)=1$.
Hence
$$
\left(r',\tfrac{r}{2}bf+\xi',-\tfrac{s'}{2}\right)e^{(\tfrac{s'}{2}-1)D}
=\left(r',\tfrac{r}{2}bf+\xi'+r'(\tfrac{s'}{2}-1)D,-1 \right).
$$
Applying
Proposition \ref{prop:hodge-enriques},
we get
$$
e \left({\cal M}_H \left(r',\tfrac{r}{2}bf+\xi',-\tfrac{s'}{2}\right)\right)=
e \left({\cal M}_H \left(2,\zeta,-\tfrac{r'}{2}\right)\right)
$$
for a general $H$,
where 
$\zeta=-(\tfrac{r}{2}bf+\xi'+r'(\tfrac{s'}{2}-1)D)$.

(3)
Finally we assume that $r \equiv 0 \mod 4$ and $s \equiv 0 \mod 4$.
If $l=2$, then $2 \mid (\tfrac{r}{2}bf+\xi)$.
By Lemma \ref{lem:Hauzer} (2), $v$ is not primitive.  
Hence $l=1$.
By using Lemma \ref{lem:reduction1} (1) again,
we have
$$
e\left({\cal M}_H \left(r,\tfrac{r}{2}bf+\xi,-\tfrac{s}{2} \right)\right)=
e \left({\cal M}_H \left(r',\tfrac{r}{2}bf+\xi',-\tfrac{s'}{2}\right)\right)
$$
for a general $H$,
where $\xi'$ is primitive
and
$r' >\langle v^2 \rangle$.
Since we can take $\eta \in E_8(-1)$ with 
$\frac{r}{2}b+(\xi',\eta)=1$, as in the case (2), we get the claim.
\end{proof}


We shall next treat the general case.
We use induction on $r$.
We set $c_1:=d_1 \sigma+d_2 f+ \xi$, 
$\xi \in E_8(-1)$.
Replacing $v$ by $v \exp(k \sigma)$,
we may assume that
$-\frac{r}{2} <d_1 \leq \frac{r}{2}$.
We first assume that $d_1 \ne 0, \frac{r}{2}$.
We note that $(c_1,f)=d_1$.
Replacing $v$ by $v \exp(\eta)$, $\eta \in E_8(-1)$,
we may assume that $s>\langle v^2 \rangle$.
Then by Proposition \ref{prop:hodge-enriques}, 
$e({\cal M}_H(v))=e({\cal M}_H(s,-c_1,-\frac{r}{2}))$ for a general $H$.
We take an integer $k$ such that
$0<r+2d_1k \leq 2|d_1|<r$.
\begin{NB}
If $d_1>0$, then
$r=2d_1(-k)+\lambda$, $0<\lambda \leq 2d_1$.
Hence $0 <r+2d_1k \leq 2|d_1|$.
\end{NB}
Then $v \exp(kf)=(s,(-c_1+skf),-\frac{r'}{2})$,
where $r'=r+2d_1k$.
Since $s>\langle v^2 \rangle$,
Proposition \ref{prop:hodge-enriques},
implies that $e({\cal M}_{H}(s,(-c_1+skf),-\frac{r'}{2}))=
e({\cal M}_{H}(r',(c_1-skf),-\frac{s}{2}))$ for a general $H$.
By induction hypothesis, we get our claim.

If $d_1=0, \frac{r}{2}$, then we may assume that 
$-\frac{r}{2}<d_2  \leq \frac{r}{2}$.
If $d_2 \ne 0,\frac{r}{2}$, 
then we can apply the same argument and get our claim.
If $(d_1,d_2)=(0,0),(\tfrac{r}{2},0),(0,\tfrac{r}{2})$, then 
the claim follows from Lemma \ref{lem:reduction2}.

\begin{NB}
\begin{equation}
v= (r,d_1 \sigma+d_2 f+k \xi,-\tfrac{s}{2})
\end{equation}
$\xi \in E_8(-1)$ is primitive,
\begin{equation}
\begin{split}
-\frac{r}{2}<& d_1 \leq \frac{r}{2},\\
-\frac{r}{2}<& d_2 \leq \frac{r}{2},\\
0 \leq & k \leq \frac{r}{2}.
\end{split}
\end{equation}
If $d_1<\frac{r}{2}$, then
$$
e({\cal M}_H(r,d_1 \sigma+d_2 f+k \xi,-\tfrac{s}{2}))
=e({\cal M}_H(d_1 \sigma+d_2 f+k \xi,-\tfrac{s'}{2})
$$
Let $v=(r,\frac{r}{2}(\sigma+f)+k \xi,-\frac{s}{2})$
be a primitive Mukai vector such that
$0 \leq k \leq \frac{r}{2}$ and  $\xi$ is primitive. 
\end{NB}
Assume that $(d_1,d_2)=(\tfrac{r}{2},\tfrac{r}{2})$.
We may assume that 
$\xi=k \xi'$, $\xi'$ is primitive and
$0 \leq k \leq \frac{r}{2}$.

For $\eta \in E_8(-1)$,
we set $\sigma':=\sigma-\frac{(\eta^2)}{2}f+\eta$.
Then
$\sigma'$ and $f$ spans a hyperbolic lattice and
\begin{equation}
\begin{split}
\left(\tfrac{r}{2}(\sigma+f)+\xi,f \right)=& \frac{r}{2}\\
\left(\tfrac{r}{2}(\sigma+f)+\xi,\sigma' \right)=& 
\frac{r}{2}\left(1-\frac{(\eta^2)}{2}\right)+(\xi,\eta).
\end{split}
\end{equation}
Replacing $\eta$ by $-\eta$ if necessary,
we can take $\eta$ such that
\begin{equation}
(\xi',\eta)=
\begin{cases}
-1, & 2  \mid (\eta^2)/2\\
1, & 2 \nmid (\eta^2)/2.
\end{cases}
\end{equation}

\begin{equation}
\frac{r}{2}\left(1-\frac{(\eta^2)}{2}\right)+(\xi,\eta)
\equiv 
\begin{cases}
\frac{r}{2}-k \mod r & 2 \mid (\eta^2)/2\\
k \mod r & 2 \nmid (\eta^2)/2.
\end{cases}
\end{equation} 
If $k \ne \frac{r}{2},0$, then
we can reduce to the case where $|d_1|<\frac{r}{2}$.
If $k=0$, then choosing $\eta$ with $(\eta^2)=-2$,
we can reduced to the case $d_1=0$.
If $k=\frac{r}{2}$, then
we choose $\eta$ satisfying
$((\xi'-\eta)^2) \equiv ({\xi'}^2)+2 \mod 4$.
Then  
\begin{equation}
\frac{r}{2}\left(1-\frac{(\eta^2)}{2}\right)+\frac{r}{2}(\xi',\eta)
\equiv 
0 \mod r.
\end{equation}
Hence we can also reduce to the case where $d_1=0$.
Therefore Theorem \ref{thm:e-poly} holds.
\qed

\begin{rem}
In \cite{Hauzer}, Hauzer takes a hyperbolic lattice
spanned by $\sigma$ and $\sigma+f+e_1$, where $e_1 \in E_8(-1)$
is a $(-2)$-vector.
Then $c_1=(r+(\xi,e_1))\sigma'+(r/2)f'+\xi'$.

\end{rem}

\begin{NB}
\begin{thm}
For $r,s \in 2{\Bbb Z}$ and $L \in \NS(X)$,
assume that $v:=(r,c_1(L),-\tfrac{s}{2})$ is primitive.
Let ${\cal M}_H(r,L,-\frac{s}{2})$ be the substack of 
${\cal M}_H(v)$ consisting of $E$ with $\det E=L$.
Then
${\cal M}_H(r,L,-\tfrac{s}{2}) \ne \emptyset$ if and only if
\begin{enumerate}
\item
$\gcd(r,c_1(L),s)=1$ and $\langle v^2 \rangle \geq -1$ or 
\item 
$\gcd(r,c_1(L),s)=2$ and $\langle v^2 \rangle \geq 2$ 
or 
\item
$\gcd(r,c_1(L),s)=2$,
$\langle v^2 \rangle=0$ and $L \equiv \frac{r}{2}K_X \mod 2$.
\end{enumerate}
\end{thm}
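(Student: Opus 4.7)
The plan is to proceed by induction on the even rank $r$, using two polynomial-preserving moves: the exponential twist $v \mapsto v\,e^D$ for $D \in \NSf(X)$ (which corresponds to tensoring sheaves by $\mathcal{O}_X(D)$ and thus preserves $\mathcal{M}_H$ itself), and the $(-1)$-reflection of Proposition \ref{prop:hodge-enriques}, which identifies
$$
e(\mathcal{M}_H(r, c_1, -\tfrac{s}{2})) = e(\mathcal{M}_H(s, -c_1, -\tfrac{r}{2}))
$$
once $\langle v^2 \rangle < rs$; this inequality can always be forced by a preliminary $E_8$-twist that inflates $s$. Every subsequent step consists of composing these two basic moves.

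Fix a hyperbolic plane $\mathbb{Z}\sigma + \mathbb{Z}f \subset \NSf(X)$ and decompose $c_1 = d_1\sigma + d_2 f + \xi$ with $\xi \in E_8(-1)$; twists by $e^{k\sigma}$ and $e^{k'f}$ normalize $d_1, d_2 \in (-r/2, r/2]$. In the \emph{generic case} $d_1 \notin \{0, r/2\}$, composing the reflection with a twist $e^{kf}$ (for $k$ chosen so that $r' := r + 2 d_1 k$ satisfies $0 < r' \leq 2|d_1| < r$) produces a Mukai vector of the same discriminant but strictly smaller even rank, and the induction hypothesis concludes; $d_2$ is symmetric. The real difficulty is the four \emph{bad positions} $(d_1, d_2) \in \{(0,0), (r/2, 0), (0, r/2), (r/2, r/2)\}$. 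For these I would change the hyperbolic basis by replacing $\sigma$ with $\sigma' := \sigma - \tfrac{(\eta^2)}{2} f + \eta$ for a suitable $\eta \in E_8(-1)$: one checks $(\sigma')^2 = 0$ and $(\sigma', f) = 1$, so $(\sigma', f)$ again spans a hyperbolic plane, and the new coefficient $(c_1, \sigma')$ picks up a tunable shift depending on $(\xi, \eta)$ and $(\eta^2)$. Flexibility in $\eta \in E_8(-1)$ — enough to realize any target residue of $(\xi, \eta)$ modulo $\ell := \gcd(r, \xi, s) \in \{1, 2\}$ (cf.\ Lemma \ref{lem:Hauzer}) and to toggle the parity of $(\eta^2)/2$ — lets me push $(c_1, \sigma')$ out of the bad residues $\{0, r/2\} \bmod r$, dropping back into the generic case. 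A preliminary reflection making $\xi/\ell$ primitive in $E_8(-1)$ is needed to guarantee this flexibility.

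At the base of the induction the rank has been brought to $2$. One final isotropic twist $D = \sigma - \tfrac{(\eta^2)}{2} f + \eta$ with $(c_1, D)$ tuned so that the shifted Euler characteristic equals $-1$, followed by one last reflection, places the vector into the specific target shape $(2, \xi, -s'/2)$ with $\xi$ primitive (case $\ell = 1$) or $(2, 0, -s'/2)$ (case $\ell = 2$), as dictated by the parity dichotomy of Lemma \ref{lem:Hauzer}. The main obstacle — and the reason Hauzer's earlier argument bottomed out at rank $4$ — is the congruence bookkeeping in the bad-position reductions: every choice of $\eta$ must respect congruences modulo $2\ell$ on $r$ and $s$ so the $\gcd(r, c_1, s) \in \{1, 2\}$ dichotomy is preserved, and in the configuration $(d_1, d_2) = (r/2, r/2)$ one additionally needs a fine parity condition on $((\xi' - \eta)^2) \bmod 4$ to annihilate the new $\sigma$-coefficient. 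Verifying that this Chinese-remainder-type system on $\eta$ is always solvable is the technical heart of the argument.
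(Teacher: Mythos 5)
Your proposal reproduces, essentially faithfully, the reduction step of the paper (Theorem \ref{thm:e-poly}): the induction on even rank via $(-1)$-reflections and exponential twists, the normalization $c_1=d_1\sigma+d_2f+\xi$, the four bad positions, and the change of hyperbolic basis $\sigma'=\sigma-\tfrac{(\eta^2)}{2}f+\eta$ with the congruence bookkeeping modulo $2\ell$. But this reduction only establishes an identity of virtual Hodge ``polynomials'' $e({\cal M}_H(r,c_1,-\tfrac{s}{2}))=e({\cal M}_H(2,\ast,-\tfrac{s'}{2}))$; it is not a proof of the stated theorem, which is an if-and-only-if non-emptiness criterion. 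The actual content of the criterion --- that the rank $2$ stack is non-empty precisely when $\langle v^2\rangle\geq -1$ (for $\gcd=1$), $\geq 2$ (for $\gcd=2$), or $=0$ with $L\equiv K_X\bmod 2$ --- is nowhere addressed in your argument. This is the input of Proposition \ref{prop:Kim} (Kim's rank $2$ theorem), which the paper reproves via the relative Fourier--Mukai transform on an elliptic fibration (Corollary \ref{cor:elliptic}, Propositions \ref{prop:exist1} and \ref{prop:exist2}, the latter including a dimension count on Harder--Narasimhan strata for the existence and a Fourier--Mukai argument for the emptiness of $M_H(2,0,0)$). Without some such construction and obstruction at rank $2$, the reduction is vacuous: it transports non-emptiness but never establishes it anywhere.

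Two further points are missing even granting the rank $2$ input. First, the reflections and twists operate on $c_1\in\NSf(X)=\NS(X)/{\Bbb Z}K_X$, so they cannot by themselves distinguish $\det E=L$ from $\det E=L+K_X$; yet case (iii) of the statement does exactly that ($L\equiv\tfrac{r}{2}K_X$ versus $L\equiv\tfrac{r}{2}K_X+K_X$, one stack empty and the other not). You need the refined determinant tracking of Proposition \ref{prop:hodge-enriques}~(2), under which $L+\tfrac{r}{2}K_X$ is sent to $-(L+\tfrac{s}{2}K_X)$, to propagate case (iii) through the chain of reductions. Second, you implicitly use that $e({\cal M})\neq 0$ if and only if ${\cal M}\neq\emptyset$; this is true for these quotient stacks (the leading term of $e(Q^{ss})/e(GL(N))$ is positive when $Q^{ss}\neq\emptyset$) but should be stated, since the whole transfer of non-emptiness rests on it.
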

\end{NB}

By Theorem \ref{thm:e-poly},
Theorem \ref{thm:exist:intro} for $r>0$ is reduced to
the following claim.
\begin{prop}[{Kim \cite{Kim4}}]\label{prop:Kim}
Assume that $v:=(2,c_1(L),-\tfrac{s}{2})$ is primitive.
Then
${\cal M}_H(2,L,-\tfrac{s}{2}) \ne \emptyset$ if and only if
\begin{enumerate}
\item
$\gcd(2,c_1(L))=1$ and $\langle v^2 \rangle \geq -1$ or 
\item 
$\gcd(2,c_1(L))=2$ and $\langle v^2 \rangle \geq 2$ 
or 
\item
$\gcd(2,c_1(L))=2$,
$\langle v^2 \rangle=0$ and $L \equiv K_X \mod 2$.
\end{enumerate}
\end{prop}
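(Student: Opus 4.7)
The plan is to use a relative Fourier--Mukai transform associated to an elliptic fibration of $X$ to reduce the non-emptiness question for ${\cal M}_H(2,L,-s/2)$ to cases whose answer is classical. Every unnodal Enriques surface admits elliptic fibrations $\pi:X\to{\Bbb P}^1$, each with two multiple fibers of multiplicity $2$; write $f=2F$ for the fiber class and $F_1,F_2$ for the two half-fibers, so that $F_1-F_2\equiv K_X$. A relative Poincar\'e-type kernel on the fiber product with the relative Jacobian $\bar\pi:Y\to{\Bbb P}^1$ yields a Fourier--Mukai functor $\Phi:D^b(X)\to D^b(Y)$ that sends a sheaf of fiber-rank $r$ and fiber-degree $d$ to one of fiber-rank $|d|$ and fiber-degree $\mp r$. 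Combining $\Phi$ with twists by line bundles and with the $(-1)$-reflection of Proposition~\ref{prop:hodge-enriques}, I will alter the Mukai vector $v=(2,c_1(L),-s/2)$ while preserving both $\langle v^2\rangle$ and the non-emptiness of the moduli.

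For case~(i), $\gcd(2,c_1(L))=1$ means $c_1(L)$ is not divisible by $2$ in $\NSf(X)$. Using the rich supply of elliptic pencils on an unnodal Enriques surface, I choose $\pi$ so that the fiber-degree $d=(c_1(L)\cdot F)$ is odd; then $\Phi$ reduces the problem to rank-$1$ sheaves on $Y$, essentially to a Hilbert scheme of points, which is non-empty iff $\langle v^2\rangle\geq -1$. For cases~(ii) and (iii), $\gcd(2,c_1(L))=2$, so $d$ may be arranged to be $0$, and $\Phi$ reduces the problem to torsion sheaves supported on a divisor numerically equivalent to a multiple of $f$; elementary dimension counts give non-emptiness for $\langle v^2\rangle\geq 2$, together with the rigid boundary case $\langle v^2\rangle=0$ in which the torsion sheaf must be supported on a single half-fiber $F_i$.

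In the rigid case of (iii), the rank-$0$ sheaf is of the form $\iota_*{\cal L}$ with $\iota:F_i\hookrightarrow X$ and ${\cal L}$ a line bundle on $F_i$; the choice $i\in\{1,2\}$ shifts $\det$ by $K_X$, so existence is equivalent to $L$ lying in the appropriate $K_X$-coset, i.e., $L\equiv K_X\bmod 2$. The necessity direction then follows from $\dim\Ext^1(E,E)=1+\dim\Ext^2(E,E)+\langle v^2\rangle$: stability and Serre duality force $\dim\Ext^2(E,E)\leq 1$, hence $\langle v^2\rangle\geq -1$; when $\gcd(2,c_1(L))=2$, Lemma~\ref{lem:Hauzer} forces $r,c_1(L),s$ to be all even, so $\langle v^2\rangle=(c_1(L)^2)+rs\in 4{\Bbb Z}$, and the minimal non-negative value $0$ is the one that carries the additional determinantal constraint.

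The main obstacle is the careful bookkeeping of determinant classes, as opposed to numerical classes in $\NSf(X)$, through the relative Fourier--Mukai transform: the multiple-fiber structure $f=2F_1=2F_2$ introduces a $K_X$-twist ambiguity in $\Phi$ that is exactly what produces the $L\equiv K_X\bmod 2$ condition in case~(iii), and resolving this ambiguity in the correct direction requires an explicit description of the action of $\Phi$ on $\Pic(X)$ rather than merely on numerical classes.
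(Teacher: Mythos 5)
Your case~(i) is essentially sound and close in spirit to the paper's route (Subsections 2.3--2.4): the paper also uses a relative Fourier--Mukai transform for an elliptic pencil, choosing the pencil so that $(c_1(L),f)=1$, although it lands in rank~$0$ sheaves supported on an integral curve $D$ with $(D^2)=\langle v^2\rangle$ rather than in rank~$1$ sheaves; either target works once one invokes Bridgeland's preservation of stability for polarizations $H+nf$, $n\gg 0$. The problems are in cases~(ii)--(iii) and in the necessity direction. When $\gcd(2,c_1(L))=2$ you arrange fiber-degree $d=0$, but then $\gcd(\text{fiber rank},\text{fiber degree})=2$ and the relative Fourier--Mukai machinery breaks down exactly there: the restriction of $E$ to a general fiber is a degree-$0$ semistable bundle, the WIT/IT property fails, and the would-be image consists of semistable sheaves supported on \emph{non-integral} divisors in the fiber class (multiples of $f$ and of the half-fibers). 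The moduli of such sheaves is not an ``elementary dimension count''; this is precisely the configuration the paper avoids. Its actual argument for (ii) is a direct Harder--Narasimhan stratification count on the $\mu$-semistable stack with $c_1=0$ (each unstable stratum has dimension $\langle v^2\rangle-\langle v_1,v_2\rangle<\langle v^2\rangle$, while every component of the stack has dimension $\geq\langle v^2\rangle$), and for (iii) it uses the classification $M_H(2,K_X,0)\cong X$ together with the Fourier--Mukai transform defined by its universal family to show that every stable sheaf with that numerical Mukai vector has determinant $\equiv K_X$, whence $M_H(2,0,0)=\emptyset$. Your half-fiber picture for (iii) is a plausible heuristic but is not established by anything you set up.

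The necessity argument also has a real gap. From $\ext^1(E,E)=1+\ext^2(E,E)+\langle v^2\rangle\geq 0$ and $\ext^2(E,E)=\hom(E,E(K_X))\leq 1$ you only get $\langle v^2\rangle\geq -2$, not $\geq -1$; the bound $-1$ fails exactly when $E\cong E(K_X)$. Since $\langle v^2\rangle$ is even here, excluding the value $-2$ is the whole point, and on a \emph{nodal} Enriques surface it genuinely occurs (this is case~(iv) of Theorem~\ref{thm:exist:nodal}). Ruling it out on an unnodal surface requires an extra input: either the effectivity of the transformed divisor $D$ (forcing $(D^2)\geq 0$ because an unnodal surface has no $(-2)$-curves), or the description $E=\pi_*(F)$ of $K_X$-invariant stable bundles via the K3 cover, which would produce a nodal cycle. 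The parity argument ``$\langle v^2\rangle\in 4{\Bbb Z}$'' you give only disposes of the case $\gcd(2,c_1(L))=2$, where indeed $4\mid s$ forces $\langle v^2\rangle=2s\geq 0$; it says nothing when $2\nmid c_1(L)$.
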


\begin{NB}
Indeed if $\gcd(r,c_1(L),s)=2$, then
$L$ is changed to
$L+\langle v,v(K_X) \rangle K_X=
L+\frac{s-r}{2}K_X=L+K_X$
under the $(-1)$-reflection by $v({\cal O}_X)$.
Thus Proposition \ref{prop:hodge-enriques} is refined as
\begin{equation}\label{eq:refine}
e({\cal M}_H(r,L,-\tfrac{s}{2}))=
e({\cal M}_H(s,-(L+K_X),-\tfrac{r}{2}))
\end{equation}
for a general $H$,
where $(c_1(L)^2)<0$. 
We note that $r \mod 4$ and $s \mod 4$ are invariant under 
the multiplication by a line bundle.
We set $L:=L'+\frac{r}{2}K_X$. 
Since $r+s \equiv 2 \mod 4$,
the equation \eqref{eq:refine} is 
$$
e({\cal M}_H(r,L'+\tfrac{r}{2}K_X,-\tfrac{s}{2}))=
e({\cal M}_H(s,-L'+\tfrac{s}{2}K_X,-\tfrac{r}{2})).
$$
\end{NB}
For the case of Proposition \ref{prop:Kim} (iii),
by using Proposition \ref{prop:hodge-enriques} (2),
we have Theorem \ref{thm:exist:intro} (iii). 
In the next subsection, we shall give another proof
of Kim's result. 

\begin{NB}

\begin{prop}\label{prop:00/08/17}
We set $G_1:={\cal E}_{|\{x \} \times X}$ and
$G_2:={\cal E}_{|X \times \{x \} }$.
Assume that $\deg_{G_1}(v)=0$ and 
$l(v):=-\langle v,\varrho_X \rangle/\rk v_1>0$, 
$a(v):=\langle v,v_1 \rangle/\rk v_1>0$. 
Let $\varepsilon$ be an element of $K(X) \otimes {\Bbb Q}$
such that $v(\varepsilon) \in
v_1^{\perp} \cap \varrho_X^{\perp}$,
$|\langle v(\varepsilon)^2 \rangle| \ll 1$ and
$(H,c_1(\varepsilon))=0$. 
Then ${\cal H}_{\cal E}$ induces an isomorphism
\begin{equation}
{\cal M}_H^{G_1+\varepsilon}(v)^{ss} \to 
{\cal M}_{\widehat{H}}^{G_2+\widehat{\varepsilon}}
(-{\cal H}_{\cal E}(v))^{ss},
\end{equation}
where $\widehat{\varepsilon}={\cal H}_{\cal E}(\varepsilon)$.
\end{prop}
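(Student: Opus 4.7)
The plan is to produce the claimed isomorphism by a three-step Fourier--Mukai argument: (i) verify the WIT property so that ${\cal H}_{\cal E}$ sends $G_1+\varepsilon$-twisted semistable sheaves to honest sheaves concentrated in a single degree; (ii) compute the Mukai vector and the transformed twist on the target side; and (iii) transport twisted semistability across the transform and exhibit a two-sided inverse coming from the adjoint kernel.

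For step (i), base change identifies the stalks of the cohomology sheaves ${\cal H}^i_{\cal E}(E)$ at a point $y$ of the target with $\Ext^i(G_1^{(y)},E)$, where $G_1^{(y)}$ is the fibre of ${\cal E}$ at $y$. The hypothesis $\deg_{G_1}(v)=0$ together with $G_1+\varepsilon$-twisted semistability of $E$ forces $\Hom(G_1^{(y)},E)=0$, and dually $\Ext^2(G_1^{(y)},E)=0$ using the numerical triviality of $K_X$, so only one cohomology survives; the signs built into $l(v)>0$ and $a(v)>0$ pin down which index that is. For step (ii) the Mukai vector of the transform and the image of the rational class $\varepsilon$ are then computed by Grothendieck--Riemann--Roch applied to ${\cal E}$, with the overall sign $(-1)^i$ from the WIT index yielding exactly $-{\cal H}_{\cal E}(v)$ and $\widehat\varepsilon={\cal H}_{\cal E}(\varepsilon)$.

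The main obstacle is step (iii), the preservation of twisted semistability. Given a hypothetical destabilizing subobject $\widehat F\hookrightarrow \widehat E:={\cal H}_{\cal E}(E)$ on the target side, the plan is to apply the inverse Fourier--Mukai transform to obtain a corresponding destabilizing subobject of $E$: the orthogonality $v(\varepsilon)\in v_1^{\perp}\cap\varrho_X^{\perp}$ guarantees that the Mukai pairings controlling the twisted slopes are preserved by ${\cal H}_{\cal E}$, so the slope inequality transports faithfully. The smallness $|\langle v(\varepsilon)^2\rangle|\ll 1$ combined with $(H,c_1(\varepsilon))=0$ keeps $(H,\varepsilon)$ inside a single chamber, ruling out wall-crossing phenomena that would otherwise spoil the correspondence; this is essentially the same mechanism exploited in Proposition~\ref{prop:indep}. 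The delicate point is that Fourier--Mukai preimages of subsheaves need not be sheaves, only complexes, so one must use the WIT torsion pair inherited from $E$ to conclude that the relevant preimages are concentrated in the correct degree.

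Finally, running the analogous argument with the dual kernel ${\cal E}^{\vee}$ (suitably shifted by $\dim X$) produces a morphism in the opposite direction inverse to ${\cal H}_{\cal E}$. Since the WIT property is open in families and the entire construction is relative over the base, the point-wise bijection promotes to a genuine isomorphism of the quotient stacks ${\cal M}_H^{G_1+\varepsilon}(v)^{ss}\simto {\cal M}_{\widehat H}^{G_2+\widehat\varepsilon}(-{\cal H}_{\cal E}(v))^{ss}$.
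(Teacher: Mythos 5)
Your overall architecture (WIT vanishing, Mukai-vector bookkeeping, transport of twisted semistability, inverse from the adjoint kernel) is the standard one and is indeed the strategy behind this proposition, but there is a genuine gap in your step (i), which is exactly the point where the paper's proof does its actual work. You assert that $\deg_{G_1}(v)=0$ together with $G_1+\varepsilon$-twisted semistability of $E$ ``forces'' the vanishing of both extreme $\Ext$-groups against the kernel fibres. Neither vanishing follows from semistability of $E$ alone. One of the two comes from the $G_1$-twisted \emph{stability of the fibres} ${\cal E}_{|\{x\}\times X}$ themselves, which holds only because $H$ is general with respect to $v(E_0)$ --- a hypothesis you never invoke. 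The other vanishing genuinely fails at every point precisely when $E^{\vee\vee}\cong E_0$ or $E_0(K_X)$, a case the hypotheses do not exclude; there one only gets $\Hom(E,{\cal E}_{|\{x\}\times X})=0$ for $x$ outside the finite set $\Supp(E^{\vee\vee}/E)$, and one must then argue separately that the offending cohomology sheaf of the transform, being torsion inside a torsion-free sheaf, vanishes. Note also that on an Enriques surface $K_X$ is $2$-torsion rather than trivial, so Serre duality genuinely distinguishes $E_0$ from $E_0(K_X)$; your appeal to ``numerical triviality of $K_X$'' conflates the two, whereas the paper must check $\Hom(E,E_0)=\Hom(E,E_0(K_X))=0$ as two separate conditions.

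Your step (iii) is a plan rather than a proof: the transport of destabilizing subobjects, the fact that their transforms are concentrated in the right degree via the torsion pair, and the matching of twisted Hilbert polynomials are precisely the content of the theorem in \cite{Y:twist1} that the paper's proof invokes verbatim once the two vanishing statements above are verified. You correctly name the delicate points (preimages of subsheaves are only complexes; the role of $v(\varepsilon)\in v_1^{\perp}\cap\varrho_X^{\perp}$ and $|\langle v(\varepsilon)^2\rangle|\ll 1$ in staying inside one chamber) but do not resolve them. To complete the argument you should either carry out that analysis or, as the paper does, reduce to the cited theorem and verify only its two hypotheses --- in which case the exceptional case $E^{\vee\vee}\cong E_0,E_0(K_X)$ and the genericity of $H$ with respect to $v(E_0)$ are the only things left to check, and they are exactly what your write-up omits.
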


\begin{proof}
Since $H$ is general with respect to $v(E_0)$,
we see that ${\cal E}_{|\{x \} \times X}$ is $G_1$-twisted
stable. Then we see that 
Lemma \ref{lem:G^2-2} holds.
We next show that Lemma \ref{lem:G^0-2} holds.
We may assume that $E$ is $\mu$-stable. 
If $E^{\vee \vee} \ne E_0, E_0(K_X)$, 
then $\Hom(E,E_0)=\Hom(E,E_0(K_X))=0$.
If $E^{\vee \vee}=E_0, E_0(K_X)$, then
$\Hom(E,{\cal E}_{|\{x \} \times X})=0$ for
$x \in X \setminus \Supp(E^{\vee}/E)$.
Thus Lemma \ref{lem:G^0-2} holds.
Then the same proof of Theorem \ref{thm:00/08/17}
works and we get our claim.
\end{proof}

\end{NB}

\subsection{Relative Fourier-Mukai transform}\label{subsect:relative-FM}

For $G \in K(X)$ with $\rk G>0$, 
we define $G$-twisted semi-stability replacing
the Hilbert polynomial $\chi(E(nH))$ by the $G$-twisted Hilbert
polynomial $\chi(G^{\vee} \otimes E(nH))$.
${M}_H^G(r,L,-\frac{s}{2})$ 
denotes the moduli scheme of $G$-twisted
semi-stable sheaves $E$ with $v(E)=(r,c_1(L),-\frac{s}{2})$ and
$\det E=L$.
\begin{NB}
For $L \in \NS(X)$ with $c_1(L)=c_1$,
we also set
${M}_H^G(r,L,-\frac{s}{2})=\{E \in 
{M}_H^G(r,c_1,-\frac{s}{2}) \mid \det E=L\}$.
\end{NB}
If $G={\cal O}_X$, then
we also denote ${M}_H^G(r,L,-\frac{s}{2})$ by
${M}_H(r,L,-\frac{s}{2})$. 
The $G$-twisted semi-stability is the same as the $\alpha$-twisted
semi-stability, where $\alpha=c_1(G)/\rk G$.

We have an elliptic fibration $X \to {\Bbb P}^1$
such that $2f$ is the divisor class of a fiber.   
Let $G_1$ be a locally free sheaf on $X$
such that $v(G_1)=v({\cal O}_X)+v({\cal O}_X(\sigma))+(0,0,k)$.
We set $Y:={M}_{H+nf}^{G_1}(0,2f,1)$, where $H$ is an ample
divisor on $X$ and $n \geq 0$. 
Then $\chi(G_1,E)=-\langle v(G_1),v(E) \rangle=0$
for $E \in {M}_{H+nf}^{G_1}(0,2f,1)$.
\begin{lem}
$Y$ consists of $G_1$-twisted stable sheaves.
\end{lem}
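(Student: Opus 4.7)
The plan is to assume $E\in Y$ is semi-stable but not $G_1$-twisted stable, so there is a proper subsheaf $F\subsetneq E$ realizing equality in the reduced $G_1$-twisted Hilbert polynomials, and then derive a numerical contradiction. Since $E$ is Simpson semi-stable with $v(E)=(0,2f,1)$ it is pure of dimension one with support in the class $2f$, so $F$ is pure of dimension one and $c_1(F)$ equals $f$ or $2f$ in $\NSf(X)$.

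The case $c_1(F)=2f$ is harmless: $E/F$ is a nonzero zero-dimensional sheaf, so $\chi(F)\leq\chi(E)-1=0$, and a direct Mukai pairing computation yields $\chi(G_1,F)=-\langle v(G_1),(0,2f,\chi(F))\rangle=2\chi(F)-2<0$; dividing by the common degree $2(f,H+nf)$ yields $p_F<p_E$, giving strict stability. The essential case is $v(F)=(0,f,\alpha)$. Since $\chi(G_1,E)=-\langle v(G_1),v(E)\rangle=0$ by hypothesis, and since $d(F)=(f,H+nf)=(f,H)$ while $d(E)=2(f,H)$, equality of reduced twisted Hilbert polynomials collapses to the single numerical condition $\chi(G_1,F)=-\langle v(G_1),v(F)\rangle=0$. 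With $v(G_1)=v({\cal O}_X)+v({\cal O}_X(\sigma))+(0,0,k)$ of rank $2$ and first Chern class $\sigma$, Mukai's pairing yields
\[
\langle v(G_1),(0,f,\alpha)\rangle=(\sigma,f)-2\alpha=1-2\alpha,
\]
so equality forces $\alpha=\tfrac12$.

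Finally I would invoke the numerical triviality of $K_X$ on $X$: by Riemann--Roch $\chi(F)=\ch_2(F)-\tfrac12(c_1(F),K_X)=\ch_2(F)$ for any rank-zero sheaf, so the third Mukai coordinate $\alpha$ equals $\chi(F)\in{\Bbb Z}$ and cannot be $\tfrac12$. This contradicts the assumed equality and shows that every semi-stable object of $Y$ is already $G_1$-twisted stable. The only real obstacle is bookkeeping in the second step; once one verifies that the twisted reduced Hilbert polynomials for $F$ and $E$ differ by the constant $(1-2\alpha)/(f,H)$, the conclusion is a one-line integrality (parity) argument.
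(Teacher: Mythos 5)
Your argument is correct and essentially reproduces the paper's proof: both reduce the failure of strict stability to the parity statement that a destabilizing $F$ must satisfy $\chi(G_1,F)=0$, hence $(\sigma,c_1(F))=2\chi(F)$ even, while $(\sigma,f)=1$ is odd. The one step you pass over silently is the justification that $c_1(F)\in\{f,2f\}$ in $\NSf(X)$ — this is precisely where unnodality enters (every fiber of the elliptic fibration is irreducible since there are no $(-2)$-curves), and it is the hypothesis the paper's proof makes explicit; without it the lemma can fail on a reducible fiber.
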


\begin{proof}
If $E \in {M}_{H+nf}^{G_1}(0,2f,1)$ is properly $G_1$-twisted
semi-stable, then there is a proper subsheaf $E_1$ of $E$
such that $\chi(G_1,E_1)=0$ and $E/E_1$ is also purely 1-dimensional.
We set $v(E_1)=(0,\xi_1,a)$, $a \in {\Bbb Z}$.
Then $(\xi_1,c_1(G_1))=2a \in 2{\Bbb Z}$.  
Since $(c_1(E_1),c_1(G_1)),(c_1(E/E_1),c_1(G_1)) \geq 0$
and $(c_1(E),c_1(G_1))=2$,
$(c_1(E_1),c_1(G_1))=0$ or 
$(c_1(E/E_1),c_1(G_1))=0$. 
If every singular fiber is irreducible,
then $(c_1(E_1),c_1(G_1))>0$ and 
$(c_1(E/E_1),c_1(G_1))>0$.
Therefore $Y$ consists of $G_1$-twisted stable sheaves.
\end{proof}
By
\cite{Br:1}, $Y$ is a smooth projective surface which is a 
compactification of $\Pic_{X/C}^1$.
Hence $Y \cong X$.
Let ${\cal E}$ be a universal family.
Let $\Psi:{\bf D}(X) \to {\bf D}(Y)$
be a contravariant Fourier-Mukai transform
defined by
\begin{equation}
\Psi(E):={\bf R}\Hom_{p_Y}(p_X^*(E),{\cal E}),
\end{equation}
where $p_X$ and $p_Y$ are the projections from $X \times Y$ to $X$ and $Y$
respectively.

Let $L_1$ be a line bundle on $C \in |H|$
and set $G_2:=\Psi(L_1)[1]$ 
(see the above of \cite[Lem.\ 3.2.3]{PerverseII}).
We also set $\widehat{H}:=-c_1(\Psi(G_1))$
(\cite[Lem.\ 3.2.1]{PerverseII}).
\begin{prop}[{\cite[Prop.\ 3.4.5]{PerverseII}}]\label{prop:elliptic}
Assume that $(c_1(L),f)=\frac{r}{2} \in {\Bbb Z}$ and 
$\chi(E,L_1)<0$.
$\Psi$ induces an isomorphism
$$
{\cal M}_{H+nf}^{G_1}\left(r,L,-\frac{s}{2}\right) \cong
{\cal M}_{\widehat{H}+nf}^{G_2}\left(0,D,-\frac{s'}{2}\right)$$
 for $n \gg 0$,
where 
$D$ is an effective divisor such that
$(D^2)=(c_1(L)^2)+rs$ and $(D,2f)=r$.
\end{prop}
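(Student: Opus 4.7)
The plan is to apply the relative Fourier--Mukai transform framework developed in \cite{PerverseII}; indeed the statement is essentially \cite[Prop.\ 3.4.5]{PerverseII}, so the task is to outline why our setup satisfies the hypotheses of that result and to sketch the main steps.

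First I would analyze $\Psi(E)$ cohomologically. Using the projection formula together with the known Mukai vector of $\mathcal{E}$, the cohomological Fourier--Mukai transform sends $v(E) = (r, c_1(L), -\tfrac{s}{2})$ with $(c_1(L), f) = \tfrac{r}{2}$ to a class on $Y$ of the form $(0, D, -\tfrac{s'}{2})$ satisfying $(D, 2f) = r$; preservation of Mukai pairing (up to sign, since $\Psi$ is contravariant) gives $(D^2) = (c_1(L)^2) + rs$. The numerical condition $(c_1(L), f) = \tfrac{r}{2}$ is precisely what forces the rank of $\Psi(E)$ to vanish, and effectivity of $D$ will follow once $\Psi(E)$ is known to be a genuine sheaf of positive Euler characteristic on its support.

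Second, I would establish the WIT (single-degree) property: $\Psi(E)$ should be a sheaf placed in a fixed cohomological degree. The hypothesis $\chi(E, L_1) < 0$ is tailored so that, for the universal family $\mathcal{E}_y$ restricted to a general smooth fiber $F \subset X$, the relevant $\Hom$ vanishes; the classical Fourier--Mukai equivalence on the elliptic curve $F$ then pins down the degree of concentration. Uniform vanishing over all of $Y$ is where $G_1$-twisted semistability of $E$ enters: it rules out destabilizing subsheaves of $E$ that would produce a nontrivial $\mathcal{H}^0(\Psi(E))$ or $\mathcal{H}^2(\Psi(E))$. Having established WIT, the map from the $X$-side to the $Y$-side is set-theoretically well defined.

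Third, and most crucially, I would check that $(H+nf, G_1)$-twisted Gieseker semistability on the $X$-side transports exactly to $(\widehat{H}+nf, G_2)$-twisted Gieseker semistability on the $Y$-side, for $n$ sufficiently large. The definitions $G_2 = \Psi(L_1)[1]$ and $\widehat{H} = -c_1(\Psi(G_1))$ are tailored so that the $G_i$-twisted Hilbert polynomials match under $\Psi$ (essentially by adjunction, $\chi(G_2^\vee \otimes \Psi(E)) = \pm\chi(G_1^\vee \otimes E \otimes L_1^\vee)$ up to sign and $L_1$-twist corrections). For $n \gg 0$ the polarization $H + nf$ becomes essentially fiber-like, so Gieseker semistability reduces to fiberwise behavior, and the classical fiberwise Fourier--Mukai equivalence on the elliptic curve intertwines the two notions of twisted stability. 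Invertibility of $\Psi$ via the adjoint kernel then supplies the inverse map, giving the isomorphism of stacks. The main obstacle is the uniformity in $n$: the threshold beyond which twisted stability is preserved must be chosen simultaneously for every $G_1$-twisted semistable sheaf with the fixed Mukai vector, which rests on a standard boundedness argument for the moduli family of $E$'s and their destabilizing subsheaves.
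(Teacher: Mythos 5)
The paper offers no proof of this proposition: it is imported verbatim from \cite[Prop.\ 3.4.5]{PerverseII}, so there is nothing in the text to compare your argument against except that citation. Your outline correctly reconstructs the standard strategy behind such relative Fourier--Mukai statements --- the cohomological computation showing $(c_1(L),f)=\tfrac{r}{2}$ forces $\Psi(E)$ to have rank $0$ with $(D,2f)=r$ and $(D^2)=(c_1(L)^2)+rs$ by invariance of the Mukai pairing, the WIT/concentration step driven by $\chi(E,L_1)<0$, and the matching of $G_1$- and $G_2$-twisted semistability for the suited polarizations $H+nf$, $\widehat{H}+nf$ with $n\gg 0$ --- and you correctly identify the genuinely technical point (uniform choice of $n$ via boundedness, and the exact intertwining of the two twisted Hilbert polynomials), which is precisely the content carried by the cited result rather than by this note.
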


\begin{rem}
Replacing $E$ by $E(mf)$ ($m \gg 0$),
$\chi(E,L_1)<0$ holds.
\end{rem}

\begin{rem}
Although $G_1$ is fixed, $H$ is not fixed. 
So we can change $H$ to be general.
\end{rem}

\begin{cor}\label{cor:elliptic}
Assume that $2 \nmid c_1(L)$. Then 
${\cal M}_{H+nf}^{G_1}(2,L,-\frac{s}{2}) \ne \emptyset$ if and only if 
$(c_1(L)^2)+2s \geq 0$.
\end{cor}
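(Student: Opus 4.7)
The plan is to use Proposition~\ref{prop:elliptic} to trade the rank~$2$ problem for a rank~$0$ problem on $X$, which can then be resolved by a direct construction on a curve.

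After replacing $E$ by $E(mf)$ for $m\gg 0$ (which preserves the hypothesis $2\nmid c_1(L)$, the value $(c_1(L)^2)+2s$ and the non-emptiness of the stack), we may assume $\chi(E,L_1)<0$. Proposition~\ref{prop:elliptic} then yields an isomorphism
\[
\mathcal{M}_{H+nf}^{G_1}\!\left(2,L,-\tfrac{s}{2}\right)
\;\cong\;
\mathcal{M}_{\widehat{H}+nf}^{G_2}\!\left(0,D,-\tfrac{s'}{2}\right),
\]
where $D$ is a class with $(D,2f)=2$ and $(D^2)=(c_1(L)^2)+2s$. In particular $(D,f)=1$, so $D$ is primitive in $\NS(X)$ and the target Mukai vector is primitive.

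For the necessity direction, any object of the right-hand stack is a pure one-dimensional sheaf whose scheme-theoretic support is an effective divisor in the class $D$. On an unnodal Enriques surface every irreducible curve $C$ satisfies $(C^2)=2p_a(C)-2\geq 0$, since $(C,K_X)=0$ and there are no $(-2)$-curves; hence every effective divisor has non-negative self-intersection, and $(c_1(L)^2)+2s=(D^2)\geq 0$.

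For the sufficiency direction, assume $(D^2)\geq 0$. Riemann--Roch gives $\chi(\mathcal{O}_X(D))=(D^2)/2+1\geq 1$, and $(D,f)=1$ with $f$ nef prevents $K_X-D$ from being effective, so $h^2(\mathcal{O}_X(D))=0$ and $|D|\neq\varnothing$. The condition $(D,f)=1$ also means $D$ is a bisection of the elliptic fibration and is in particular not purely vertical, so a Bertini argument on the unnodal Enriques surface $X$ produces a smooth irreducible curve $C\in|D|$. For any line bundle $N$ on $C$ with $\chi(N)=-s'/2$, the push-forward $i_{C,*}N$ is a pure one-dimensional sheaf of Mukai vector $(0,D,-s'/2)$; since $C$ is irreducible, $i_{C,*}N$ is automatically Gieseker stable and hence $G_2$-twisted semistable, yielding the desired point of the right-hand moduli stack.

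The main obstacle is the Bertini step: realizing the numerical class $D$ by a smooth irreducible curve from only $(D,f)=1$ and $(D^2)\geq 0$. The unnodal hypothesis is essential here, since otherwise $(-2)$-curves could occur as fixed components of $|D|$ and raise the non-emptiness threshold. With such a smooth $C\in|D|$ in hand, both directions of the corollary follow formally from Proposition~\ref{prop:elliptic} and the automatic stability of line bundles supported on an irreducible curve.
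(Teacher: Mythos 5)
Your overall strategy is exactly the paper's: apply Proposition~\ref{prop:elliptic} to convert the rank-$2$ stack into a rank-$0$ stack with Mukai vector $(0,D,-\tfrac{s'}{2})$, observe that $(D,f)=1$ forces $D$ to be primitive, get necessity from the absence of $(-2)$-curves on an unnodal surface, and get sufficiency by exhibiting an irreducible curve in $|D|$ and pushing forward a line bundle of the right Euler characteristic. Your effectivity argument via Riemann--Roch and $h^0(K_X-D)=0$ is fine, and the necessity argument (every irreducible curve on an unnodal Enriques surface has non-negative self-intersection, hence so does every effective divisor) is correct.

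The one step that does not hold up as written is the production of the curve $C$. You invoke ``a Bertini argument'' to get a \emph{smooth} irreducible member of $|D|$, and you yourself flag this as the main obstacle without closing it. Bertini cannot do this job: when $(D^2)=0$ one has $\chi({\cal O}_X(D))=1$, so $|D|$ is typically a single point and there is no ``general member'' to speak of; and even when $(D^2)>0$, Bertini on an Enriques surface only gives smoothness away from the base locus and says nothing about irreducibility without knowing $|D|$ is not composed with a pencil. The paper fills this gap by citing the structure theory of linear systems on Enriques surfaces, \cite[Thm.\ 3.2.1]{CD}, which guarantees that $|D|$ contains a \emph{reduced and irreducible} curve, using $(D,f)=1$ precisely to handle the case $(D^2)=0$ (there $D$ is a half-fiber class of some elliptic pencil, and unnodality makes the half-fiber irreducible). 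Note also that smoothness is neither available in general (a half-fiber can be a nodal rational curve) nor needed: a line bundle on a reduced irreducible curve already pushes forward to a stable pure one-dimensional sheaf. So your argument becomes correct once the Bertini appeal is replaced by the citation of \cite[Thm.\ 3.2.1]{CD} and ``smooth irreducible'' is weakened to ``reduced and irreducible.''
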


\begin{proof}
Let $D$ be the divisor in Proposition \ref{prop:elliptic}.
Since $(D^2)=(c_1(L)^2)+2s$, we shall prove that
the condition is $(D^2) \geq 0$.
Obviously the condition is necessary.
Conversely assume that $(D^2) \geq 0$.  
Since $Y \cong X$ is unnodal,
$|D|$ contains a reduced and irreducible
curve $C$ by \cite[Thm. 3.2.1]{CD}, where we also use 
$(D,f)=1$ if $(D^2)=0$.
Then a line bundle $F$ on $C$ with $\chi(F)=-\tfrac{s'}{2}$
is a member of ${\cal M}_{\widehat{H}+nf}^{G_2}(0,D,-\tfrac{s'}{2})$.
\end{proof}

\begin{NB}
\begin{rem}
If $(c_1^2)-2s=0$, then
$D$ is the support of a multiple
fiber of an elliptic fibration.
\end{rem}
\end{NB}

\subsection{Rank 2 case}\label{subsect:unnodal:rank2}

\begin{prop}\label{prop:exist1}
Assume that $2 \nmid c_1(L)$ is primitive.
Then ${\cal M}_H(2,L,-\frac{s}{2}) \ne \emptyset$ for
a general $H$ if and only if 
$(c_1(L)^2)+2s \geq 0$.
\end{prop}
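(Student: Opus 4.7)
The strategy is to combine Corollary \ref{cor:elliptic} (which gives the non-emptiness criterion for the $G_1$-twisted moduli via the relative Fourier--Mukai transform) with the invariance of the virtual Hodge ``polynomial'' under change of general polarization and twist (Proposition \ref{prop:indep}), and then upgrade Hodge-polynomial equality to non-emptiness by exploiting primitivity of $v$.

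First, since $2\nmid c_1(L)$, the Mukai vector $v=(2,c_1(L),-\tfrac{s}{2})$ is primitive by Lemma \ref{lem:Hauzer}(1). Recall that $G_1$-twisted semistability coincides with $\alpha$-twisted semistability for $\alpha=c_1(G_1)/\mathrm{rk}(G_1)=\sigma/2$. After an arbitrarily small perturbation, both $(H,0)$ (for general $H$) and $(H+nf,\sigma/2)$ may be assumed general with respect to $v$. Proposition \ref{prop:indep}, in the refined form fixing the determinant $L$ noted after its statement, then gives
\begin{equation*}
e\!\left({\cal M}_H\!\left(2,L,-\tfrac{s}{2}\right)\right)
=
e\!\left({\cal M}_{H+nf}^{G_1}\!\left(2,L,-\tfrac{s}{2}\right)\right).
\end{equation*}

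Since $v$ is primitive and the pair is general, every semistable sheaf in either moduli is stable, so both stacks are $\mathbb{G}_m$-gerbes over smooth projective moduli schemes $M$ of stable sheaves with fixed determinant. Hence $e({\cal M})=e(M)/(xy-1)$, and a non-empty smooth projective variety satisfies $h^{d,d}\geq 1$, so $e(M)\ne 0$ if and only if $M\ne\emptyset$. Combined with Corollary \ref{cor:elliptic}, this yields
\begin{equation*}
{\cal M}_H\!\left(2,L,-\tfrac{s}{2}\right)\ne\emptyset
\;\Longleftrightarrow\;
{\cal M}_{H+nf}^{G_1}\!\left(2,L,-\tfrac{s}{2}\right)\ne\emptyset
\;\Longleftrightarrow\;
(c_1(L)^2)+2s\geq 0.
\end{equation*}

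The main technical point is verifying that the pair $(H+nf,\sigma/2)$ can be chosen general with respect to $v$ by a perturbation that does not break the equivalence with $G_1$-twisted stability (this is where the primitivity of $v$ is used to put us in a chamber of generic pairs). The appeal to primitivity is essential: without it, Hodge-polynomial equality could not detect non-emptiness, since strictly semistable strata could contribute cancelling terms. Granted primitivity, Corollary \ref{cor:elliptic} does the heavy lifting and the remainder is routine.
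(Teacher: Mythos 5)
There is a genuine gap: you apply Corollary \ref{cor:elliptic} as if its only hypothesis were $2\nmid c_1(L)$, but that corollary rests on Proposition \ref{prop:elliptic}, whose hypothesis for $r=2$ is $(c_1(L),f)=1$ for the half-fiber $f$ of the chosen elliptic fibration. This is a nontrivial constraint: for rank $2$, tensoring by a line bundle ${\cal O}_X(D)$ changes the determinant by $2D$, so the parity of $(c_1(L),f)$ is an invariant of the moduli problem and cannot be normalized away. If, say, $c_1(L)$ is a primitive vector lying in the $E_8(-1)$ summand, then $(c_1(L),f)$ and $(c_1(L),\sigma)$ are both even and Corollary \ref{cor:elliptic} simply does not apply to either of the two obvious fibrations $|2f|$, $|2\sigma|$; your chain of equivalences breaks at its last link, in both directions. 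The content of the paper's proof of this proposition is exactly the missing step: when $2\mid(c_1(L),f)$ and $2\mid(c_1(L),\sigma)$, one uses that $E_8(-1)$ is unimodular and $c_1(L)$ is primitive to find $\eta\in E_8(-1)$ with $(c_1(L),\eta)=1$, sets $\sigma':=\sigma-\tfrac{(\eta^2)}{2}f+\eta$ (an isotropic class with $(\sigma',c_1(L))=1$), and uses unnodality of $X$ to see that $\sigma'$ is effective and $|2\sigma'|$ is an elliptic pencil, to which Proposition \ref{prop:elliptic} and Corollary \ref{cor:elliptic} can then be applied. Without this choice of a new fibration your argument does not cover all $L$ with $2\nmid c_1(L)$.

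The surrounding scaffolding of your argument (identifying $G_1$-twisted stability with $\alpha$-twisted stability for $\alpha=\sigma/2$, invoking Proposition \ref{prop:indep} to compare the general untwisted and twisted chambers via the virtual Hodge ``polynomial'', and noting that for a primitive Mukai vector and general pair the stack is a $\bb{G}_m$-gerbe over a projective scheme so that $e\ne 0$ is equivalent to non-emptiness) is consistent with what the paper leaves implicit, and is worth making explicit; note only that smoothness of the moduli scheme is not needed (and is not automatic here, since $E\cong E(K_X)$ can occur) --- projectivity already forces $e^{d,d}>0$ on a non-empty component. But these points do not repair the missing case analysis on the parity of $(c_1(L),f)$, which is the heart of the proof.
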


\begin{proof}
If $2 \nmid (c_1(L),f)$ or
$2 \nmid (c_1(L),\sigma)$, then
the claim follows from Corollary \ref{cor:elliptic}.
Otherwise we may assume that $c_1(L) \in E_8(-1)$ and $c_1(L)$ is primitive.
Then there is $\eta \in E_8(-1)$ with
$(c_1(L),\eta)=1$.
We set $\sigma':=\sigma-\frac{(\eta^2)}{2}f+\eta$.
Then ${\Bbb Z}\sigma' +{\Bbb Z}f$ spans a hyperbolic lattice
and $(\sigma',c_1(L))=1$.
Since $X$ is unnodal
and $f$ is effective, $\sigma'$ is effective and
$2\sigma'$ defines an elliptic fibration.
Therefore the claim also holds for this case.  
\end{proof}

\begin{prop}\label{prop:exist2}
Assume that $2 \mid c_1(L)$.
Then ${\cal M}_H(2,L,-\frac{s}{2}) \ne \emptyset$
if and only if 
\begin{enumerate}
\item[(i)]
$(c_1(L)^2)+2s>0$ or 
\item[(ii)]
$(c_1(L)^2)+2s=0$ and $L \equiv K_X \mod 2$. 
\end{enumerate}
\end{prop}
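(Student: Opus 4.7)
The first step is to normalize the determinant. Since $2\mid c_1(L)$ in $\NSf(X)$, write $L=2F+\epsilon K_X$ in $\Pic(X)$ with $\epsilon\in\{0,1\}$ and some line bundle $F$; then $E\mapsto E\otimes F^{-1}$ gives an isomorphism $\mathcal{M}_H(2,L,-s/2)\cong\mathcal{M}_H(2,\epsilon K_X,-s'/2)$ with $2s'=(c_1(L)^2)+2s=\langle v^2\rangle$. The assertion becomes: this moduli is non-empty iff $s'>0$, or $s'=0$ and $\epsilon=1$. Necessity of $s'\geq 0$ is the standard lower bound on $\langle v^2\rangle$ for the existence of stable sheaves of primitive Mukai vector with $\gcd(r,c_1,s)=2$.

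For $s'>0$, I would use a Serre construction: choose $C$ ample with $(C^2)$ large and $Z\subset X$ a generic 0-dimensional subscheme of length $|Z|=1+s'/2+(C^2)$; then the extension
\[
0\to\mathcal{O}_X(-C)\to E\to I_Z(C+\epsilon K_X)\to 0
\]
yields $E$ with $v(E)=(2,\epsilon K_X,-s'/2)$, locally free thanks to Cayley-Bacharach for generic $Z$. The vanishings $H^0(E)=H^0(E\otimes K_X)=0$ (forced by $|Z|>\max\{h^0(\mathcal{O}(C)),h^0(\mathcal{O}(C+K_X))\}$), together with Hodge index for generic ample $H$, block Gieseker-destabilization by every sub-line bundle of slope $\geq 0$.

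For the boundary case $s'=0,\epsilon=1$, let $\pi\colon\tilde X\to X$ be the K3 double cover and $p\in\tilde X$ generic; put $E:=\pi_*I_p$. Using $\pi_*\mathcal{O}_{\tilde X}=\mathcal{O}_X\oplus K_X$ and $\pi^*K_X=\mathcal{O}_{\tilde X}$, Grothendieck--Riemann--Roch gives $v(E)=(2,K_X,0)$. For stability, any sub-line bundle $L'\hookrightarrow E$ yields by adjunction a nonzero section of $I_p\otimes\pi^*(L')^{-1}$ on $\tilde X$; for $\mu(L')>0$ this pullback has negative $\pi^*H$-degree, killing sections; for $\mu(L')=0$ Hodge index (for generic $H$) forces $L'\in\{\mathcal{O}_X,K_X\}$, and then $H^0(\tilde X,I_p)=0$ closes out. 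For emptiness in the case $s'=0,\epsilon=0$, suppose $E$ were stable with $v(E)=(2,\mathcal{O}_X,0)$: then $E\cong E^\vee$ so by Serre duality $h^2(E)=h^0(E\otimes K_X)$, and $\chi(E)=1$ forces $h^0(E)+h^0(E\otimes K_X)\geq 1$. Either case yields a nonzero map $L'\to E$ with $L'\in\{\mathcal{O}_X,K_X\}$ of reduced Hilbert polynomial $n^2 H^2/2+1$, which strictly exceeds $p_E(n)=n^2 H^2/2+1/2$ and destroys Gieseker stability, a contradiction.

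The main obstacle is verifying Gieseker stability in the Serre construction for $s'>0$, since $\mu(E)=0$ and so $\mu$-stability is unavailable; one must eliminate all sub-line bundles of slope exactly $0$, which for generic ample $H$ reduces (via Hodge index) to ruling out $L'\in\{\mathcal{O}_X,K_X\}$, and this is closed out by the $H^0$-vanishing forced by the length of $Z$.
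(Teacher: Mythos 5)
Your construction in the main case $s'>0$ does not produce the bundle you need. With $L_1=\mathcal{O}_X(-C)$ and $L_2=\mathcal{O}_X(C+\epsilon K_X)$, the extensions are classified by $\Ext^1(I_Z(C+\epsilon K_X),\mathcal{O}_X(-C))\cong H^1(I_Z(2C+(1+\epsilon)K_X))^{\vee}$, and $h^0(2C+(1+\epsilon)K_X)=1+2(C^2)$ while $|Z|=1+\tfrac{s'}{2}+(C^2)$. For $(C^2)$ large (as you prescribe) one has $|Z|<h^0(2C+(1+\epsilon)K_X)$, so a \emph{generic} $Z$ imposes independent conditions on $|2C+(1+\epsilon)K_X|$; hence $H^1(I_Z(2C+(1+\epsilon)K_X))=0$ and the only extension is the split one $\mathcal{O}_X(-C)\oplus I_Z(C+\epsilon K_X)$, which is destabilized by its second summand. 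Equivalently, Cayley--Bacharach fails for generic $Z$ in this numerical range: a generic curve in $|2C+(1+\epsilon)K_X|$ through a colength-one subscheme $Z'\subset Z$ misses the remaining point. Making C--B hold for generic $Z$ forces $|Z|\geq h^0(2C+(1+\epsilon)K_X)+1$, i.e.\ $(C^2)\leq \tfrac{s'}{2}-1$, which is impossible for the smallest case $s'=4$ (recall $4\mid s'$ and $(C^2)\geq 2$ for $C$ ample). So the heart of case (i) is not established; one would have to use special (non-generic) $Z$, or a different construction. The paper avoids this entirely: it observes that $E_1\oplus E_2$ with $v(E_i)=(1,0,-\tfrac{s}{4}\mp\tfrac12)$ lies in the stack of $\mu$-semistable sheaves, that every irreducible component of that stack has dimension $\geq\langle v^2\rangle$, while each Harder--Narasimhan stratum of non-stable sheaves has dimension $\langle v^2\rangle-\langle v_1,v_2\rangle<\langle v^2\rangle$ (here $\langle v_1,v_2\rangle=\tfrac{s}{2}>0$); hence stable sheaves must exist. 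You should either adopt such a dimension count or substantially rework the Serre construction.

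Two smaller points. In the emptiness argument for $(s',\epsilon)=(0,0)$ you assert $E\cong E^{\vee}$ for a stable $E$ with $v(E)=(2,0,0)$; this is not justified ($E$ need not be locally free a priori, and even for locally free $E$ the dual is merely another stable sheaf with the same Mukai vector). Without it, $h^2(E)=\hom(E,K_X)$ gives a map \emph{out of} $E$, which does not directly contradict Gieseker stability since $p_{K_X}>p_E$; you would need the extra step that such a surjection $E\to K_X$ forces $E\cong I_W(K_X)\oplus K_X$ (using $\Ext^1(K_X,I_W(K_X))=H^1(I_W)=0$), or simply quote, as the paper does, that every stable sheaf with $v\equiv(2,0,0)\bmod K_X$ lies in $M_H(2,K_X,0)$. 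Your treatment of the boundary case $(s',\epsilon)=(0,1)$ via $\pi_*I_p$ is a nice constructive alternative to the paper's citation of $M_H(2,K_X,0)\cong X$, and your necessity claim is fine once you add the divisibility $4\mid s'$ needed to upgrade $\langle v^2\rangle\geq -2$ to $\langle v^2\rangle\geq 0$.
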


\begin{proof}
We may assume that $L=0, K_X$.
If there is a stable sheaf $E$, then
$E \cong E(K_X)$ and $(c_1(L)^2)+2s \geq -2$, or
$E \not \cong E(K_X)$ and $(c_1(L)^2)+2s \geq -1$.
Since $4 \mid s$,
$(c_1(L)^2)+2s =2s \geq 0$.

Assuming $(c_1(L)^2)+2s >0$, we first prove 
${\cal M}_H(2,L,-\frac{s}{2}) \ne \emptyset$
for a general $H$. 
We set $k:=\frac{s}{4}>0$.
Then 
$E_1 \oplus E_2$ with 
$v(E_1)=(1,0,-k-\frac{1}{2}), v(E_2)=(1,0,-k+\frac{1}{2})$
belongs to the moduli stack
${\cal M}_H(2,L,-\frac{s}{2})^{\mu\text{-}ss}$
of $\mu$-semi-stable sheaves.
Let ${\cal F}(v_1,v_2)$ be the substack of
${\cal M}_H(2,L,-\frac{s}{2})^{\mu\text{-}ss}$ consisting
of $E$ whose Harder-Narasimhan filtration
$0 \subset F_1 \subset F_2=E$ satisfies
$v(F_1)=v_1$ and $v(F/F_1)=v_2$.
Then 
\begin{equation}
\begin{split}
\dim {\cal F}(v_1,v_2)= &\langle v_1,v_2 \rangle+\dim {\cal M}_H(v_1)
+\dim {\cal M}_H(v_2)\\
=& \langle v^2 \rangle-\langle v_1,v_2 \rangle.
\end{split}
\end{equation} 
We set 
$v_1=(1,\xi_1,-\frac{s_1}{2})$, 
$v_2=(1,\xi_2,-\frac{s_2}{2})$.
Then $\xi_1$ and $\xi_2$ are numerically trivial, 
$s_1<s_2$ and $s_1+s_2=s$.
Then
$\langle v_1,v_2 \rangle=\frac{s_1+s_2}{2}=\frac{s}{2}>0$.
By the deformation theory,
each irreducible component ${\cal M}$
of ${\cal M}_H(v)^{\mu\text{-}ss}$
satisfies $\dim {\cal M}\geq \langle v^2 \rangle$.
Hence there is a stable sheaf.

We next treat the case where 
$(c_1(L)^2)+2s=0$.
By \cite{Y:twist1}, $M_H(2,K_X,0) \cong X$
and $E(K_X) \cong E$ for all $E \in M_H(2,K_X,0)$.
Moreover there is a universal family 
which defines a Fourier-Mukai transform.
Then for a stable sheaf $E$ with 
$v(E)\equiv v \mod K_X$,
we see that $E \in M_H(2,K_X,0)$.     
In particular, $M_H(2,0,0)=\emptyset$.
\end{proof}

Therefore Proposition \ref{prop:Kim} holds by
Proposition \ref{prop:exist1},\ \ref{prop:exist2},
and we complete the proof of
Theorem \ref{thm:exist:intro} for $r>0$.

\begin{rem}
Nuer constructed $\mu$-stable vector bundles of rank 4 in
\cite[Thm. 5.1]{N}.
This reuslt (\cite[Thm. 5.1]{N})
does not follow from our method.
\end{rem}

\subsection{Rank 0 case}\label{subsect:unnodal:rank0}
We shall prove Theorem \ref{thm:exist:intro}
for $r=0$.
We first note that if 
${\cal M}_H(0,L,-\tfrac{s}{2}) \ne \emptyset$, then
$L$ is effective.
For the proof of Theorem \ref{thm:exist:intro},
we use Proposition \ref{prop:elliptic}. 
By choosing a suitable elliptic fibration,
we may assume that $(c_1(L),f)>0$.
Then we have
 $$
e\left({\cal M}_H \left(0,L,-\frac{s}{2}\right)\right)
=e\left({\cal M}_H \left(r,L',-\frac{s'}{2}\right)\right),
$$ where
$(c_1(L'),2f)=r$.
Then the case of $r=0$ is reduced to the case of $r>0$
at least
for $\gcd(c_1(L),s)=1$ or 
$(c_1(L)^2) >0$. 
\begin{NB}
Let $2F_A,2F_B$ be multiple fibers of $X \to {\Bbb P}^1$.
If $r$ is odd, then
$rF_A+K_X=rF_B$, since $K_X=rK_X=rF_A-rF_B$.
Hence 
${\cal M}_H(0,rF_A+K_X,-s/2)$ consists of stable vector bundles
on $F_B$.
\end{NB}
Assume that $\gcd(c_1(L),s)=2$ and $(c_1(L)^2)=0$.
Then ${\cal M}_H(0,L,-\frac{s}{2}) = \emptyset$
or ${\cal M}_H(0,L+K_X,-\frac{s}{2}) = \emptyset$.
If $L \equiv 0 \mod 2$, then
 there is $\frac{r}{2}C \in |L|$ such that
$C$ is a smooth fiber of the elliptic fibration, and
a stable vector bundle $F$ of rank $\frac{r}{2}$ and $\chi(F)=-\frac{s}{2}$
on $C$ is a member of ${\cal M}_H(0,L,-\frac{s}{2})$.
Hence 
${\cal M}_H(0,L,-\frac{s}{2}) \ne \emptyset$
if and only if $L \equiv 0 \mod 2$ as we claimed in 
Theorem \ref{thm:exist:intro}.

\begin{rem}\label{rem:rank0}
It is easy to see that \cite[Thm.\ 1.7]{Stability}
holds for Enriques surfaces.
Indeed a similar claim to \cite[Prop.\ 2.7]{Stability}
(see Appendix) holds and \cite[Prop.\ 2.8, Prop.\ 2.11]{Stability}
hold if we modify the number $N$ in the claims suitably.
 
Then Theorem \ref{thm:exist:intro} for $r=0$ can also 
be reduced to the claim for $r>0$.
\end{rem}

\begin{rem}\label{rem:r=0}
Since $X$ is unnodal, effectivity implies 
$(c_1(L)^2) \geq 0$ and $(c_1(L),H)>0$.
Conversely if $(c_1(L)^2) \geq 0$ and $(c_1(L),H)>0$, then
$L$ is effective by the Riemann-Roch theorem.
\end{rem}

\section{A nodal case}\label{sect:nodal}

We shall treat the nodal case by adding a deformation argument
and results of Kim \cite{Kim1} and \cite{Kim:excep}.
\begin{thm}\label{thm:exist:nodal}
Let $X$ be a nodal Enriques surface over ${\Bbb C}$.
We take $r,s \in {\Bbb Z}$ $(r>0)$ and $L \in \NS(X)$ such
that $r-s$ is even.
Assume that 
$\gcd(r,c_1(L),\frac{r-s}{2})=1$, i.e., the Mukai vector
is primitive.
Then
${\cal M}_H(r,L,-\tfrac{s}{2}) \ne \emptyset$ for a general $H$
if and only if
\begin{enumerate}
\item
$\gcd(r,c_1(L),s)=1$ and $(c_1(L)^2)+rs \geq -1$ or 
\item 
$\gcd(r,c_1(L),s)=2$ and $(c_1(L)^2)+rs \geq 2$ 
or 
\item
$\gcd(r,c_1(L),s)=2$,
$(c_1(L)^2)+rs =0$ and $L \equiv \frac{r}{2}K_X \mod 2$ or
\item
$(c_1(L)^2)+rs =-2$,
$L \equiv D+\frac{r}{2}K_X \mod 2$, where 
$D$ is a nodal cycle, i.e.,
$D$ is effective, $(D^2)=-2$ and $|D+K_X| =\emptyset$.
\end{enumerate}
\end{thm}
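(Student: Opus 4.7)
The plan is to combine a deformation from the unnodal case (Theorem \ref{thm:exist:intro}) with a direct construction of exceptional bundles handling the new case (iv). First I will set up a one-parameter smoothing $\pi:\mathcal{X}\to B$ of Enriques surfaces with $\mathcal{X}_0=X$ nodal and generic fiber $\mathcal{X}_\eta$ unnodal. After shrinking $B$, the local system $R^2\pi_*\mathbb{Z}$ is constant, so $L$, the polarization $H$, and the Mukai vector $v$ extend flatly over $B$, and the relative Gieseker moduli $\mathcal{M}_{\mathcal{H}}(v)\to B$ is proper over $B$.

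For sufficiency in cases (i)--(iii), Theorem \ref{thm:exist:intro} gives a stable sheaf $E_\eta$ on $\mathcal{X}_\eta$ with $v(E_\eta)=v$. Passing to a trait through $0$ and applying Langton's theorem, $E_\eta$ extends to an $H$-semistable sheaf $E_0$ on $X$ with the same Mukai vector. Primitivity of $v$ together with generality of $H$ then forces $E_0$ to be stable, establishing non-emptiness of $\mathcal{M}_H(v)$ on $X$. For case (iv), the corresponding moduli on the unnodal side is empty, so deformation cannot be used; instead I will follow Kim \cite{Kim1,Kim:excep}. Starting from the exceptional sheaf $\mathcal{O}_D$ (or an appropriate twist by a line bundle on $D$), successive Mukai reflections and line bundle twists produce an exceptional bundle $E$ on $X$ realizing any primitive $v$ satisfying (iv), with the correct $K_X$-coset traced through the reflections.

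For necessity, the conditions on $\gcd(r,c_1(L),s)$ and the parity of $L$ come directly from the Mukai-lattice calculations of Lemma \ref{lem:Hauzer}, independent of whether $X$ is nodal. The inequality $(c_1(L)^2)+rs\geq -2$ will follow from Serre duality on $X$: for a stable $E$ with $v(E)=v$, $\dim\Ext^1(E,E)\geq \langle v^2\rangle+2$, so non-emptiness forces $\langle v^2\rangle\geq -2$. When $\langle v^2\rangle=-2$ the sheaf $E$ is necessarily rigid with $\Hom(E,E(K_X))=0$; by Kim's classification of exceptional sheaves on nodal Enriques surfaces \cite{Kim:excep}, any such $E$ satisfies $v(E)\equiv v(\mathcal{O}_D)+\tfrac{r}{2}v(K_X)\pmod 2$ for some nodal cycle $D$, which is exactly condition (iv).

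The main obstacle will be controlling the specialization in the deformation step: one needs $H$ to be simultaneously general on $X$ and on nearby unnodal fibers with respect to $v$, so that Langton's theorem produces a stable rather than properly semistable limit, and one must verify that the parity condition in case (iii) is preserved, since the $K_X$-coset of $L$ must be tracked flatly across the family. Both points can be handled by observing that the walls in $\Amp(\mathcal{X}_t)$ and the $K_X$-torsion of $\NS(\mathcal{X}_t)$ vary flatly with $t$, but making this rigorous is the delicate technical ingredient. A secondary issue is verifying in case (iv) that Kim's reflection orbits cover every admissible class in the stated coset; this is a finite-dimensional lattice calculation using that the reflections generated by nodal cycles act transitively on primitive classes with $\langle v^2\rangle=-2$ in each coset of $2v(K(X))+v(K_X)\mathbb{Z}$.
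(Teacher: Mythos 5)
Your deformation argument for cases (i)--(iii) matches the paper's: the paper also deforms $(X,H)$ to unnodal surfaces (unobstructed because $H^2(X,T_X)=H^2(X,{\cal O}_X)=0$) and concludes by properness of the relative moduli $f:M_{({\cal X},{\cal H})}(v)\to S$, which is interchangeable with your Langton step. The genuine gap is in your necessity argument for case (iii). You assert that the parity condition $L\equiv\frac{r}{2}K_X \bmod 2$ ``comes directly from Lemma \ref{lem:Hauzer}'', but that lemma only constrains $\gcd(r,c_1,s)$ in $\NSf(X)=\NS(X)/{\Bbb Z}K_X$ and cannot distinguish $L$ from $L+K_X$. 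When $\gcd(r,c_1(L),s)=2$ and $(c_1(L)^2)+rs=0$, exactly one of the two determinant classes gives a nonempty moduli, and deciding which one requires a geometric input: the paper shows that $M_H(r,L+K_X,-\frac{s}{2})$ is itself an Enriques surface whose universal family is a Fourier--Mukai kernel, so every stable sheaf with Mukai vector $v$ already lies in it and the moduli with determinant $L$ is empty. No lattice computation yields this. (A small slip in the same passage: you need $\dim\Ext^1(E,E)=\langle v^2\rangle+1+\dim\Hom(E,E(K_X))\leq\langle v^2\rangle+2$, not ``$\geq$'', in order for $\Ext^1\geq 0$ to force $\langle v^2\rangle\geq-2$.)

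For case (iv) your construction is under-specified at the decisive point and misdescribes Kim's method. Since $\langle v^2\rangle=-2$ forces $E\cong E(K_X)$, the paper (following Kim) writes $E=\pi_*(F)$ for the K3 double cover $\pi:\widetilde{X}\to X$ and reduces existence for rank $r$ to rank $2$ (Lemma \ref{lem:exceptional}), which needs the determinant bookkeeping of Lemma \ref{lem:determinant} and a polarization with $\gcd(r,(c_1(L),H))=1$; Kim's rank-2 classification by nodal cycles then finishes. Your substitute --- that reflections generated by nodal cycles act transitively on primitive $(-2)$-classes in each coset --- is asserted, not proved, and is not how either Kim or the paper proceeds. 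If you instead want a reflection-and-twist reduction to rank $2$ as in Theorem \ref{thm:e-poly}, you must first justify Proposition \ref{prop:hodge-enriques} on a nodal surface, where the divisor orthogonal to $c_1$ used there need not be ample; the paper does this only in the appendix via Bridgeland stability. As written, case (iv) is not established.
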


\begin{rem}
If $(c_1(L),H')>0$ for an ample divisor $H'$, then
the same claim holds for $r=0$.
\end{rem}

\begin{NB}
For $L$ satisfying the condition,
${\cal M}_H(2,L+K_X,0)$ consists of
$\pi_*(F)$, where
$F$ is a line bundle on the covering K3 surface.
Since $v(F)=(1,C,0)$ with $(C^2)=-2$,
$(C,\pi^*(H'))>0$ implies $C$ is effective.
Then $\pi_*(F/{\cal O}_{\widetilde{X}})$
is a 1-dimensional sheaf with
$\det \pi_*(F/{\cal O}_{\widetilde{X}})=\det F(-K_X)=
{\cal O}_X(L)$. Thus $L$ is effective.
\end{NB}

Obviously $(c_1(L)^2)+rs \geq -2$ is necessary for the
non-emptyness of the moduli stack.
We first assume that 
$(c_1(L)^2)+rs \geq -1$.
In his case, the existence is a consequence of
Theorem \ref{thm:exist:intro}.
Let $(X,H)$ be an Enriques surface $X$ and an ample divisor
$H$ on $X$.
By \cite[Prop.\ 1.4.1]{CD},
$H^1(X,T_X) \cong {\Bbb C}^{\oplus 10}$ and $H^2(X,T_X)=0$.
We also have $H^2(X,{\cal O}_X)=0$.
Hence a polarized deformation of the pair
$(X,H)$ is unobstructed.
\begin{NB}
If we embed $X$ into a projective space ${\Bbb P}^n$ 
by a complete linear system
$|mH|$, then
the Hilbert scheme is unobstructed at $X$ and
the map of the tangent spaces is surjective:
We first note that $H^1(X,{\cal O}_{{\Bbb P}^n})=0$
by Kodaira vanishing theorem and the ampleness of
$mH-K_X$.
By the Euler sequence
\begin{equation}
0 \to {\cal O}_X \to {\cal O}_X(1) \to (T_{{\Bbb P}^n})_{|X} \to 0
\end{equation}
and $H^1(X,{\cal O}_X)=H^2(X,{\cal O}_X)=0$,
$H^1(X,(T_{{\Bbb P}^n})_{|X})=0$.
Then by the exact sequence
\begin{equation}
0 \to T_X \to (T_{{\Bbb P}^n})_{|X} \to N_{X/{{\Bbb P}^n}} \to 0,
\end{equation}  
we have a long exact sequence
\begin{equation}
H^0(X,N_{X/{{\Bbb P}^n}}) \to H^1(X,T_X) \to H^1(X,(T_{{\Bbb P}^n})_{|X})
\to H^1(X,N_{X/{{\Bbb P}^n}}) \to H^2(X,T_X). 
\end{equation}
Hence $H^1(X,N_{X/{{\Bbb P}^n}})=0$ and the map of tangent
spaces is surjective.
\end{NB}
Let $({\cal X}, {\cal H}) \to S$ be a general
deformation of $(X,H)$ such that
a general member is not nodal and
$({\cal X}_0,{\cal H}_0)=(X,H)$ $(0 \in S)$.
Then we have a family of moduli spaces of semi-stable
sheaves
$f:{M}_{({\cal X},{\cal H})}(v) \to S$.
Under the assumption (i), (ii), (iii) in Theorem \ref{thm:exist:intro},
${M}_{({\cal X},{\cal H})}(v)_s \ne \emptyset$
for unnodal ${\cal X}_s$.
\begin{NB}
We don't need the genericity of ${\cal H}_s$.
\end{NB}
Hence $f$ is dominant.
By the projectivity of $f$, 
$\im f=S$. Hence 
${M}_{({\cal X},{\cal H})}(v)_s \ne \emptyset$
for all $s$.
\begin{prop}
Let $X$ be an Enriques surface.
Under the conditions (i), (ii), (iii) of Theorem \ref{thm:exist:intro},
${\cal M}_H(r,L,-\frac{s}{2}) \ne \emptyset$
for a general $H$. 
\end{prop}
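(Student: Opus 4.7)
The plan is to execute precisely the deformation argument sketched in the paragraph preceding the statement, now formalized as a proof. First I would produce a smooth connected base $S$ with distinguished point $0 \in S$ and a family $({\cal X},{\cal H}) \to S$ of polarized Enriques surfaces such that $({\cal X}_0,{\cal H}_0)=(X,H)$ and ${\cal X}_s$ is unnodal for $s$ in a dense open subset $U \subset S$. The existence of such a family is granted by the cohomology inputs already recorded: $H^2(X,{\cal O}_X)=0$ lets one lift $H$ to a relatively ample class, while $H^1(X,T_X)\cong {\Bbb C}^{10}$ and $H^2(X,T_X)=0$ make polarized deformations of $(X,H)$ unobstructed, so the local deformation space is smooth of dimension $10$; as the locus of nodal Enriques surfaces is a proper closed subvariety of the moduli of polarized Enriques surfaces, a generic tangent direction at $[(X,H)]$ gives an unnodal general fiber.

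Second, I would extend the line bundle $L$ to a section of the relative Néron--Severi group over a suitable étale neighborhood of $0$ (shrinking $S$ if necessary); this is possible because $L$ is a flat deformation of itself thanks to $H^2(X,{\cal O}_X)=0$, which also ensures that $\NS({\cal X}_s)$ contains a class specializing to $c_1(L)$, preserving the numerical conditions (i), (ii), (iii) across $S$. With the determinant thus fixed, I form the relative moduli stack ${\cal M}_{({\cal X},{\cal H})}(r,{\cal L},-\tfrac{s}{2})\to S$ of ${\cal H}_s$-semistable sheaves with Mukai vector $v=(r,c_1(L),-\tfrac{s}{2})$ and determinant ${\cal L}_s$. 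Passing to the associated moduli space of $S$-equivalence classes yields a projective morphism $f \colon M \to S$ by standard Gieseker--Maruyama theory.

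Third, I would apply Theorem \ref{thm:exist:intro} fiberwise: for every $s \in U$ the surface ${\cal X}_s$ is unnodal and the numerical hypotheses continue to hold, so $f^{-1}(s)\ne \emptyset$. Hence $\mathrm{im}(f) \supset U$, and by properness of $f$ the image is closed in $S$, which forces $\mathrm{im}(f)=S$. In particular $f^{-1}(0) = M_H(r,L,-\tfrac{s}{2}) \ne \emptyset$, so the underlying moduli stack is nonempty, as claimed.

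The main technical obstacle is not conceptual but bookkeeping: one must make sure that (a) $L$ really does extend to a section of the relative $\NS$, not merely of a numerical class, and (b) the chosen polarization remains "general" with respect to $v$ on unnodal fibers so that Theorem \ref{thm:exist:intro} truly applies there. Point (a) is handled by local constancy of $\NS$ on the analytic/étale neighborhood above, and (b) by shrinking $S$ so that a single wall-chamber decomposition for $v$ persists throughout the family; once $H$ is chosen general for $v$ at the origin, continuity and the discreteness of the wall structure keep ${\cal H}_s$ general for generic $s$, which is all that is needed to invoke Theorem \ref{thm:exist:intro} and conclude.
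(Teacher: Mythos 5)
Your argument is correct and is essentially the paper's own proof: the author uses exactly this deformation argument (unobstructedness of polarized deformations via $H^1(X,T_X)\cong {\Bbb C}^{10}$, $H^2(X,T_X)=H^2(X,{\cal O}_X)=0$, a general deformation with unnodal general fiber, the relative moduli space $f\colon M_{({\cal X},{\cal H})}(v)\to S$, non-emptiness over unnodal fibers from Theorem \ref{thm:exist:intro}, and projectivity of $f$ to conclude $\im f=S$). Your extra bookkeeping on extending $L$ and on the genericity of ${\cal H}_s$ only makes explicit what the paper leaves implicit.
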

If $\gcd(r,c_1(L),a)=2$, $(c_1(L)^2)+rs=0$ and
$L \not \equiv \frac{r}{2}K_X  \mod 2$,
then ${\cal M}_H(r,L,-\frac{s}{2}) =\emptyset$.
Indeed since $M_H(r,L+K_X,-\frac{s}{2}) (\ne \emptyset)$ 
is an Enriques surface
for a general $H$ and the universal family 
induces a Fourier-Mukai transform,
we see that every stable sheaf $E$ with 
$v(E)=(r,c_1(L),-\frac{s}{2})$ belongs to
$M_H(r,L+K_X,-\frac{s}{2})$.
Therefore Theorem \ref{thm:exist:nodal}
holds if $(c_1(L)^2)+rs \geq -1$.

\begin{rem}
If $r$ is odd and $H$ is general, then $\Ext^2(E,E)=0$ for
$E \in {\cal M}_H(r,c_1,-\frac{s}{2})$.
In this case, $f$ is a smooth morphism in a neighborhood of $0$. 
\begin{NB}
If $E \cong E(K_X)$ and simple (which occurs for even rank cases), 
then $\Ext^2(E,E(K_X)) \cong {\Bbb C}$.
Since $\tr:\Ext^2(E,E) \to H^2(X,{\cal O}_X)$
is a 0-map, the obstruction for infinitesimal deformation
may be non-zero. 
So $E$ may not deform under the deformation of $X$ in general.
\end{NB}
\end{rem}

\begin{NB}
\begin{prop}[{Kim \cite[Thm. 1]{Kim:excep}}]
Assume that 
$r$ is even and $(c_1(L)^2)-rs=-2$.
${\cal M}_H(r,L,-\frac{s}{2}) \ne \emptyset$
for a general $H$ if and only if 
$L \equiv N+\frac{r}{2}K_X \mod 2$, where 
$N$ is a nodal cycle.
\end{prop}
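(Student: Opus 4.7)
The plan is to reduce both directions to the analysis of stable sheaves on the K3 double cover $\pi:\widetilde{X}\to X$ associated to $K_X$. First I would observe that $\langle v^2\rangle=(c_1(L)^2)+rs=-2$ forces any stable $E\in {\cal M}_H(r,L,-\tfrac{s}{2})$ to satisfy $E\cong E\otimes K_X$. Indeed, if $E\not\cong E\otimes K_X$, then (being stable sheaves of the same Hilbert polynomial) $\Hom(E,E\otimes K_X)=0$, so Serre duality gives $\Ext^2(E,E)=0$; but Riemann--Roch yields $\chi(E,E)=-\langle v^2\rangle=2$, forcing $\dim\Ext^1(E,E)=-1$, absurd. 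Hence every stable $E$ in the moduli is $K_X$-invariant, and therefore arises as $\pi_*\widetilde{E}$ for some sheaf $\widetilde{E}$ on $\widetilde{X}$ with $\widetilde{E}\not\cong\iota^*\widetilde{E}$, where $\iota$ is the covering involution.

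For necessity, from $\pi^*E\cong\widetilde{E}\oplus\iota^*\widetilde{E}$ one computes $\langle v(\widetilde{E})^2\rangle=-2$ on the K3 surface $\widetilde{X}$. Thus $\widetilde{E}$ is spherical and, by Mukai's theory, after an equivariant $(-1)$-reflection it can be identified with a line bundle ${\cal O}_{\widetilde{X}}(\widetilde{N})$ satisfying $(\widetilde{N}^2)=-2$; equivariance ensures that the reflection descends to $X$ without changing the statement. Tracing $c_1$ through $\pi_*$ then gives $c_1(L)\equiv \pi_*\widetilde{N}+\tfrac{r}{2}K_X\pmod 2$, and the condition $\widetilde{E}\not\cong\iota^*\widetilde{E}$ translates into $N:=\pi_*\widetilde{N}$ being a nodal cycle, the crucial point being that $|N+K_X|=\emptyset$ records exactly that $\widetilde{N}$ does not descend to $X$.

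For sufficiency, given $L\equiv N+\tfrac{r}{2}K_X\pmod 2$ with $N$ a nodal cycle, lift $N$ to a divisor $\widetilde{N}$ on $\widetilde{X}$ with $(\widetilde{N}^2)=-2$ and $\widetilde{N}\not\sim\iota^*\widetilde{N}$; then $\pi_*{\cal O}_{\widetilde{X}}(\widetilde{N})$ is a $\mu$-stable rank-$2$ bundle on $X$ with the required Chern character, settling the $r=2$ case. For general even $r$ I would reduce to $r=2$ by a determinant-tracking refinement of Theorem \ref{thm:e-poly}, using that the $(-1)$-reflections of Proposition \ref{prop:hodge-enriques}(2) transform $L+\tfrac{r}{2}K_X$ to $-(L+\tfrac{s}{2}K_X)$ and so preserve the nodal class modulo $2$ along the chain of reductions. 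The main obstacle will be this higher-rank step: because $\langle v^2\rangle=-2$ places us in the exceptional regime, the virtual-Hodge-polynomial technology alone does not guarantee non-emptiness at intermediate stages (each moduli stack is a point, or even empty generically), so one must instead produce explicit stable sheaves at each reduction, most naturally by Serre constructions built on top of the rank-$2$ exceptional bundle obtained above, and verify $H$-stability of the resulting pushforwards directly.
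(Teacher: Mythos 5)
Your opening reduction is sound and is exactly how the paper begins: $\langle v^2\rangle=-2$ gives $\chi(E,E)=2$, so $\Ext^2(E,E)=\Hom(E,E(K_X))^{\vee}\ne 0$, hence $E\cong E(K_X)$ and $E=\pi_*(F)$ for a simple, rigid, $\pi^*(H)$-stable sheaf $F$ on the K3 cover with $F\not\cong\iota^*(F)$ (via \cite[Lem.\ 1.12]{Ta}). The necessity direction, however, has a genuine gap at the step ``after an equivariant $(-1)$-reflection $\widetilde{E}$ can be identified with a line bundle ${\cal O}_{\widetilde{X}}(\widetilde{N})$.'' A spherical bundle on a K3 surface is not in general one reflection away from a line bundle; an $\iota$-equivariant reflection would require the reflecting spherical object itself to be $\iota$-invariant; and $(-1)$-reflections do not preserve $c_1$ modulo $2$, so ``tracing $c_1$ through'' does not yield $c_1(L)\equiv \pi_*\widetilde{N}+\frac{r}{2}K_X \pmod{2}$. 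The paper (Lemma \ref{lem:exceptional}) avoids reflections entirely: from $\langle v(F)^2\rangle=-2$ and $\chi(F,\iota^*F)=0$ one gets, for $C:=\det F$, the identity $(C^2)=(C,\iota^*(C))-2$, which says precisely that $E':=\pi_*({\cal O}_{\widetilde{X}}(C))$ is a rank-$2$ exceptional bundle with $\det E'=L-(\frac{r}{2}-1)K_X$ by Lemma \ref{lem:determinant}; the nodal-cycle characterization then comes from Kim's classification of rank-$2$ exceptional bundles \cite{Kim1}, not from any reduction of $\widetilde{E}$ itself to a line bundle.

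For sufficiency you only handle $r=2$ and explicitly leave the higher-rank step open, proposing Serre constructions on top of the rank-$2$ bundle. The paper's construction is uniform in $r$ and requires no induction: writing $L\equiv N+\frac{r}{2}K_X\pmod{2}$ with $N$ a nodal cycle, the pullback $\pi^*(N)$ decomposes as $N_1+N_2$ with $N_1,N_2$ disjoint nodal cycles on $\widetilde{X}$ interchanged by $\iota$ (this is where $|N+K_X|=\emptyset$ enters), so $\pi^*(v)=w+\iota^*(w)$ with $w=(\frac{r}{2},N_1+\pi^*(M),-\frac{s}{2})$ and $\langle w^2\rangle=-2$; on a K3 surface such a $w$ is always represented by a (rigid, stable) sheaf $F$, and $\pi_*(F)$ is the required semistable sheaf with determinant $L$ by Lemma \ref{lem:determinant}. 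Incidentally, your stated obstacle --- that the $e$-polynomial machinery cannot certify non-emptiness when $\langle v^2\rangle=-2$ --- is not the real issue: in this regime the stack is either empty or has nonzero $e$, and the paper's Appendix (Corollary \ref{cor:appendix:exceptional}, Proposition \ref{prop:appendix:exceptional}) does run exactly such a determinant-tracking reduction to rank $2$ on nodal surfaces; the genuine missing ingredient in your write-up is the explicit equivariant splitting of $\pi^*(N)$ and the pushforward construction above.
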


\begin{proof}
If there is $E \in {\cal M}_H(r,L,-\frac{s}{2})$, then
$E \cong E(K_X)$. Hence $L \equiv N+\frac{r}{2}K_X$ by
\cite[Thm. 1 (1)]{Kim:excep}.
We next prove the converse direction.
This is also contained in the proof of
\cite[Thm. 1 (2)]{Kim:excep}, although 
a technical assumption $\gcd(\frac{r},(H,c_1(L)),-\frac{s}{2})$
is added. 
For completeness, we give an outline of the proof. 
Assume that $L$ satisfies
$L \equiv N+\frac{r}{2}K_X \mod 2$, where 
$N$ is a nodal cycle.
Then we have a decomposition
$\pi^*(N)=N_1+N_2$ such that $N_1$ and $N_2$ are nodal cycles on 
$\widetilde{X}$, 
$N_2=\iota^*(N_1)$,
$N_1 \cap N_2=0$.
Hence $\pi^*(v)=(r,\pi^*(N)+2\pi^*(M),-s)
=(\tfrac{r}{2},N_1+\pi^*(M),-\frac{s}{2})+
(\tfrac{r}{2},N_2+\pi^*(M),-\frac{s}{2})$.
We set $w:=(\tfrac{r}{2},N_1+\pi^*(M),-\frac{s}{2})$.
Since $\langle w^2 \rangle=-2$,
there is a semi-stable sheaf $E$ with $v(E)=w$ with respect to
$\pi^*(H)$.
Then $\pi_*(E)$ is also semi-stable with respect to $H$.
\begin{NB2}
$\pi^*(\pi_*(E))=E \oplus \iota^*(E)$ and
the Hilbert polynomial of $E$ and $\iota^*(E)$ are the same.
Hence $\iota^*(E)$ and $\pi^*(\pi_*(E))$ are semi-stable
with respect to $\pi^*(H)$.
Hence $\pi_*(E)$ is semi-stable. 
\end{NB2}
\end{proof}

\begin{lem}
Let $C$ be an effective divisor on $\widetilde{X}$
such that $\iota(C) \cap C =\emptyset$.
Then $\det \pi_*({\cal O}_C)=\pi_*(C)$. 
\end{lem}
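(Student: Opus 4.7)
The plan is to show that $\pi|_C \colon C \to \pi(C)$ is an isomorphism of schemes, so that $\pi_\ast({\cal O}_C)$ equals the structure sheaf of an effective divisor on $X$, whose class is by definition $\pi_\ast(C)$, and then read off the determinant from the standard structure-sheaf resolution.

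\smallskip

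Step 1 (local picture). Since $\pi\colon \widetilde X \to X$ is étale of degree $2$, it is locally trivial for the étale topology on $X$: every point of $X$ has an étale neighbourhood $U$ such that $\pi^{-1}(U)\cong U\sqcup U$, with the two sheets interchanged by $\iota$. The hypothesis $\iota(C)\cap C=\emptyset$ means that in each such local trivialization the restriction $C|_{\pi^{-1}(U)}$ is supported entirely in one of the two sheets; in particular it corresponds scheme-theoretically to an effective Cartier divisor $D\subset U$. Consequently $\pi|_C\colon C\to \pi(C)$ identifies $C$ with $D$, showing that $\pi|_C$ is an isomorphism onto the scheme-theoretic image $\bar C:=\pi(C)$, and that $\bar C$ is an effective Cartier divisor on $X$ with $\pi^\ast(\bar C)=C+\iota(C)$.

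\smallskip

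Step 2 (computing the pushforward). From Step 1 we get $\pi_\ast({\cal O}_C)={\cal O}_{\bar C}$, and the standard resolution
\[
0 \longrightarrow {\cal O}_X(-\bar C)\longrightarrow {\cal O}_X \longrightarrow {\cal O}_{\bar C}\longrightarrow 0
\]
yields
\[
\det \pi_\ast({\cal O}_C)=\det {\cal O}_{\bar C}={\cal O}_X(\bar C).
\]
It remains to identify ${\cal O}_X(\bar C)$ with the line bundle associated to the divisor $\pi_\ast(C)$. Since $\pi|_C\colon C\to \bar C$ is an isomorphism, the usual formula for the proper pushforward of a Cartier divisor under a finite map gives $\pi_\ast(C)=\bar C$ as a Weil divisor, hence as an element of $\Pic(X)$.

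\smallskip

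The only real point to verify carefully is Step 1, namely the scheme-theoretic isomorphism $\pi|_C\simto \bar C$. This follows because $\pi|_C$ is finite étale (as a base change of an étale morphism) and set-theoretically bijective (each fibre of $\pi$ over a point of $\bar C$ consists of one point in $C$ and one in $\iota(C)$, and the latter misses $C$); a finite étale morphism that is a universal bijection is automatically an isomorphism. Everything else is a routine determinant computation.
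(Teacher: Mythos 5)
The paper itself offers no proof of this lemma --- it appears only inside an excluded \verb+NB+ note environment, stated without justification --- so there is no argument of the author's to compare yours against; your étale-local computation is a legitimate way to supply one, and it is correct in substance. Two points deserve tightening. First, over a trivializing étale neighbourhood $U$ with $\pi^{-1}(U)\cong U_1\sqcup U_2$, the hypothesis $\iota(C)\cap C=\emptyset$ does \emph{not} force $C|_{\pi^{-1}(U)}$ to lie in a single sheet; it only forces the two pieces $C\cap U_1$ and $C\cap U_2$ to have disjoint images in $U$ (if $x$ lay in both images, the fibre $\{y,\iota(y)\}$ over $x$ would have $y\in C$ and $\iota(y)\in C$, so $\iota(y)\in C\cap\iota(C)$). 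So either shrink $U$ around a fixed point of $\pi(C)$ so that only one sheet meets $C$, or note directly that $\pi_*(\mathcal{O}_C)|_U=\mathcal{O}_{D_1}\oplus\mathcal{O}_{D_2}=\mathcal{O}_{D_1+D_2}$ with $D_1\cap D_2=\emptyset$ and $\pi_*(C)|_U=D_1+D_2$, which is all the determinant computation needs. Second, $\pi|_C\colon C\to\bar C$ is not literally a base change of $\pi$: the base change $\pi^{-1}(\bar C)\to\bar C$ also contains a copy of $\iota(C)$, so calling $\pi|_C$ ``étale as a base change'' is circular. The clean abstract justification is that $\pi|_C\colon C\to X$ is finite, unramified, and universally injective (your fibre argument gives injectivity on closed points, which suffices over $\mathbb{C}$), hence a proper monomorphism, hence a closed immersion; étale-locally one then sees its image is Cartier of the same multiplicities, i.e.\ equals $\pi_*(C)$. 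With either repair, the identification $\pi_*(\mathcal{O}_C)=\mathcal{O}_{\bar C}$ and the exact sequence $0\to\mathcal{O}_X(-\bar C)\to\mathcal{O}_X\to\mathcal{O}_{\bar C}\to 0$ finish the proof as you say.
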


Let $D$ be an effective divisor of $(D^2)=-2$ such that
$D+\iota^*(D)=\pi^*(y)$, $(y^2)=-2$.
Then $Z:=\frac{1}{2}\pi_*(D+\iota^*(D))$ 
is an effective divisor such that 
$(Z^2)=-2$.
If $D+\iota^*(D)=\sum_i n_i C_i$, then
$Z=\frac{1}{2}\sum_i \pi_*(C_i)$.
Since $n_i \iota_*(C_i)=n_j C_j$ for a $j$,
$Z$ is an integral divisor such that
$\pi_*(D)=y$. Since $C_i$ are smooth rational curves,
$\pi_*(C_i)$are smooth rational curves.
Since $D$ is simple normal crossing,

 \end{NB} 

We treat the remaining case, i.e., $(c_1(L)^2)+rs=-2$.
This case is completely studied by Kim in \cite{Kim1} and \cite{Kim:excep}.
For completeness of the proof, 
we add an outline of the proof in \cite{Kim:excep}.
Let $\pi:\widetilde{X} \to X$ be the universal cover of $X$.
$\widetilde{X}$ is a K3 surface.
We need the following elementary fact. 
\begin{lem}\label{lem:determinant}
For a locally free sheaf $F$ of rank $r$ on $\widetilde{X}$,
$$
\det \pi_*(F) \cong \det(\pi_*(\det F))((r-1)K_X).
$$
\end{lem}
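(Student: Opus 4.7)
The plan is to pull back to $\widetilde X$, reduce the ambiguity to a $K_X$-twist, then pin down the twist by a direct computation.

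First, since $\pi$ is an \'etale double cover with Galois involution $\iota$, there is a canonical isomorphism $\pi^*\pi_* F\cong F\oplus \iota^* F$; taking determinants yields $\pi^*(\det \pi_*F)\cong \det F\otimes \iota^*\det F$. Applying the same identity with $\det F$ in place of $F$ gives the same right-hand side, and since $\pi^*K_X\cong \mathcal{O}_{\widetilde X}$ we conclude that both sides of the desired identity have isomorphic pullbacks to $\widetilde X$. Hence they differ by an element of $\ker(\pi^*\colon\Pic(X)\to\Pic(\widetilde X))=\{\mathcal{O}_X,K_X\}$, and we may write $\det\pi_*F\cong\det\pi_*(\det F)\otimes \epsilon(F)K_X$ for some $\epsilon(F)\in\mathbb{Z}/2$.

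Next, to show $\epsilon(F)\equiv r-1\pmod 2$, we use exactness of $\pi_*$ (as $\pi$ is finite) to reduce to the line-bundle case. For line bundles $L_1,L_2$ on $\widetilde X$ the identity
\[
\det \pi_*L_1\otimes \det\pi_* L_2\cong \det\pi_*(L_1\otimes L_2)\otimes K_X
\]
follows by the same pullback argument, together with the check at $L_1=L_2=\mathcal{O}_{\widetilde X}$: here $\pi_*\mathcal{O}_{\widetilde X}=\mathcal{O}_X\oplus K_X$, so $\det\pi_*\mathcal{O}_{\widetilde X}=K_X$ and both sides equal $K_X^{\otimes 2}=\mathcal{O}_X$. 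Given any filtration $0=F_0\subset F_1\subset\cdots\subset F_r=F$ with line-bundle quotients $L_i:=F_i/F_{i-1}$, pushforward gives $\det\pi_*F=\bigotimes_i \det\pi_*L_i$; iterating the line-bundle identity shows this equals $\det\pi_*(\bigotimes_i L_i)\otimes (r-1)K_X=\det\pi_*(\det F)\otimes (r-1)K_X$, as required.

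The only technical point is producing a line-bundle filtration of $F$, since a generic rank-$r$ bundle on the K3 surface $\widetilde X$ need not admit one globally. This is resolved either by replacing $F$ with $F\otimes\pi^*A$ for a sufficiently ample $A$ on $X$ (both sides of the lemma transform identically, and the twisted bundle admits such a filtration via generic sections), or by noting that $\epsilon(F)\in\mathbb{Z}/2$ is locally constant in flat families of rank-$r$ locally free sheaves and evaluating directly at $F=\mathcal{O}_{\widetilde X}^{\oplus r}$, where $\det\pi_*F=K_X^{\otimes r}$ and $\det\pi_*(\det F)=K_X$, yielding $\epsilon=r-1\pmod 2$.
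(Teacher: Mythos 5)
Your first step is correct and clean: flat base change along the \'etale double cover gives $\pi^*\pi_*F\cong F\oplus\iota^*F$, so both sides of the asserted identity pull back to $\det F\otimes\iota^*(\det F)$ on $\widetilde{X}$, hence differ by an element $\epsilon(F)K_X$ of $\ker(\pi^*)=\{{\cal O}_X,K_X\}$. The gaps are all in how you pin down $\epsilon(F)$. First, the identity $\det\pi_*L_1\otimes\det\pi_*L_2\cong\det\pi_*(L_1\otimes L_2)\otimes K_X$ is not established by checking it at $L_1=L_2={\cal O}_{\widetilde{X}}$: your pullback argument only shows that for \emph{each} pair $(L_1,L_2)$ the discrepancy lies in the two-element group $\{{\cal O}_X,K_X\}$, and a single example does not determine it for other pairs. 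Since $\Pic$ of a K3 surface (and of an Enriques surface) is discrete, there is no connectedness argument to propagate the check; the identity is really the multiplicativity of the norm map and needs its own proof. Second, the fallback in your last paragraph fails outright for the same reason: flat families of locally free sheaves preserve Chern classes, so no $F$ with $c_1(F)\neq 0$ or $c_2(F)\neq 0$ lies in a connected family containing ${\cal O}_{\widetilde{X}}^{\oplus r}$, and the (correct) local constancy of $\epsilon$ gives nothing beyond the component of the trivial bundle. Third, even after twisting by $\pi^*A$ with $A$ very ample, a rank-$r$ bundle on a K3 surface need not admit a filtration with line-bundle quotients: generic sections produce a trivial subsheaf ${\cal O}_{\widetilde{X}}^{\oplus(r-1)}$ whose quotient is a rank-one torsion-free sheaf $I_Z(D)$ with $Z$ of length $c_2$, in general nonempty.

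The repair is exactly the paper's proof, and it shows your filtration idea survives without the general bilinear identity: for $n\gg 0$ there is an exact sequence $0\to{\cal O}_{\widetilde{X}}(-n\pi^*H)^{\oplus(r-1)}\to F\to I_Z(D)\to 0$. Pushing forward (exact, since $\pi$ is finite) and using the projection formula together with $\pi_*{\cal O}_{\widetilde{X}}={\cal O}_X\oplus{\cal O}_X(K_X)$ gives $\det\pi_*F\cong\det\pi_*(I_Z(D))\otimes{\cal O}_X((r-1)(K_X-2nH))$, while $\det\pi_*(I_Z(D))\cong\det\pi_*({\cal O}_{\widetilde{X}}(D))$ because the cokernel of $I_Z(D)\subset{\cal O}_{\widetilde{X}}(D)$ is $0$-dimensional. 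Comparing with $\det\pi_*(\det F)\cong\det\pi_*({\cal O}_{\widetilde{X}}(D))(-2(r-1)nH)$ yields the lemma. Note that the only instances of your ``bilinearity'' actually needed are those where one factor is pulled back from $X$, and there the projection formula does all the work.
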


\begin{proof}
Let $H$ be an ample divisor on $X$.
Since $\pi^*(H)$ is ample,
we have an exact sequence
\begin{equation}
0 \to {\cal O}_{\widetilde{X}}(-n \pi^*(H))^{\oplus (r-1)}
\to F \to I_Z(D) \to 0,
\end{equation} 
where $D$ is a divisor, $Z$ is a 0-dimensional subscheme of 
$\widetilde{X}$ and $n$ is sufficiently large.
Since $\pi_*({\cal O}_{\widetilde{X}})={\cal O}_X \oplus {\cal O}_X(K_X)$
and ${\cal O}_{\widetilde{X}}(D-(r-1)n\pi^*(H))=\det F$,
we get the claim.
\begin{NB}
Since
$\pi_*(\det F)=\pi_*({\cal O}_{\widetilde{X}}(D))(-(r-1)nH)$,
$\det \pi_*(\det F)=(\det \pi_*({\cal O}_{\widetilde{X}}(D)))(-2(r-1)nH)$.
\end{NB} 
\end{proof}

\begin{NB}
\begin{lem}
Let $D$ be a divisor on $\widetilde{X}$.
Then the following three conditions are equivalent.
\begin{enumerate}
\item[(1)]
$(D^2)=(D,\iota^*(D))-2$
\item[(2)]
$\pi_*({\cal O}_{\widetilde{X}}(D))$ is
a stable vector bundle with 
$\langle v(\pi_*({\cal O}_{\widetilde{X}}(D)))^2 \rangle=-2$.
\item[(3)] 
$\langle v(\pi_*({\cal O}_{\widetilde{X}}(D)))^2 \rangle=-2$
and $\det \pi_*({\cal O}_{\widetilde{X}}(D))={\cal O}_X(C+2A+K_X)$,
where $C$ is a nodal cycle.
\end{enumerate}
\end{lem}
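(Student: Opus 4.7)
The plan is to reduce the three-way equivalence to a single numerical identity and then upgrade this identity separately to the stability and determinant assertions.

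\emph{Step 1 (Mukai vector).} First I compute $v(\pi_*\mathcal O_{\widetilde X}(D))$. Since $\pi$ is \'etale of degree~$2$ and $\pi^*\td_X=\td_{\widetilde X}$, Grothendieck--Riemann--Roch gives
\[
\rk=2,\quad c_1(\pi_*\mathcal O(D))=\pi_*D,\quad \ch_2(\pi_*\mathcal O(D))=\tfrac12(D^2)\varrho_X.
\]
The projection formula yields $(\pi_*D,\pi_*D)_X=(D,\pi^*\pi_*D)_{\widetilde X}=(D^2)+(D,\iota^*D)$, so expanding the Mukai pairing gives
\[
\langle v(\pi_*\mathcal O(D))^2\rangle=(D,\iota^*D)-(D^2)-4.
\]
Hence (1) is equivalent to $\langle v^2\rangle=-2$. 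In particular, $(2)\Rightarrow(1)$ and $(3)\Rightarrow(1)$ are automatic.

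\emph{Step 2 ($(1)\Rightarrow(2)$: stability).} Assume (1). By saturation, a potential destabilizer may be taken as a sub-line bundle $L'\hookrightarrow\pi_*\mathcal O(D)$. Adjunction gives a nonzero map $\pi^*L'\to\mathcal O(D)$, so $E:=D-\pi^*L'$ is effective on $\widetilde X$; applying $\iota^*$, $\iota^*E$ is effective as well. Using the projection formula with the ample class $\pi^*H$,
\[
2(\pi_*D-2L',H)=(E+\iota^*E,\pi^*H)=2(E,\pi^*H)\ge 0,
\]
so $\mu(L')\le\mu(\pi_*\mathcal O(D))$ with equality forcing $E=0$, i.e.\ $D=\pi^*L'$. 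But then $(D,\iota^*D)=(D^2)$, contradicting~(1). Therefore $\pi_*\mathcal O(D)$ is $\mu$-stable, hence Gieseker stable.

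\emph{Step 3 ($(1)\Rightarrow(3)$: determinant).} A short direct computation, verifying on $D=0$ via $\pi_*\mathcal O_{\widetilde X}=\mathcal O_X\oplus\mathcal O_X(K_X)$ and using that $\pi^*\Pic(X)$-shifts change the determinant only by a square, yields $\det\pi_*\mathcal O(D)=\mathcal O_X(\pi_*D+K_X)$ in $\Pic(X)$. By Step~2, $\pi_*\mathcal O(D)$ is an exceptional rank-$2$ bundle (stable with $\langle v^2\rangle=-2$). Kim's classification of exceptional bundles on Enriques surfaces \cite{Kim:excep} then asserts that the first Chern class of every such bundle has the form $C+2A+K_X$ with $C$ a nodal cycle, giving~(3).

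\emph{Main obstacle.} The delicate point is Step~3: the numerical consequence $(\pi_*D)^2=2(D^2)+2$ is consistent with $\pi_*D=C+2A$ for some $(-2)$-class $C$, but producing an \emph{effective} $C$ with $|C+K_X|=\emptyset$ is genuinely geometric and does not follow from Step~1 alone. Invoking Kim's exceptional-bundle classification short-circuits this; a self-contained route would decompose $D-\iota^*D$ into $\iota$-paired components on $\widetilde X$ and identify $C$ as the $\pi$-image of the anti-invariant part, which requires an effectivity/Hodge-index analysis on the K3 cover.
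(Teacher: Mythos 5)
Your proposal is correct in substance and its skeleton (numerical identity, then stability, then determinant) matches the paper's, but it diverges at the two nontrivial steps. For Step 1 the paper gets the same identity not by Grothendieck--Riemann--Roch but from $\pi^*\pi_*{\cal O}_{\widetilde X}(D)\cong {\cal O}_{\widetilde X}(D)\oplus{\cal O}_{\widetilde X}(\iota^*D)$ and $2\langle v(\pi_*{\cal O}(D))^2\rangle=\langle(v({\cal O}(D))+v({\cal O}(\iota^*D)))^2\rangle$; the two computations are equivalent. For stability, your adjunction argument (a saturated sub-line bundle $L'$ gives an effective $D-\pi^*L'$, whence $\mu(L')\le\mu$ with equality forcing $D=\pi^*L'$ and hence $(D^2)=(D,\iota^*D)$) is more elementary and self-contained than the paper's route, which first notes $\mu$-semistability for \emph{every} polarization and then uses primitivity of the Mukai vector plus a perturbation of $H$ to exclude proper semistability; your version has the small advantage of not needing the chamber structure, while the paper's gives $\mu$-stability for all ample $H$ uniformly. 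The real divergence is in the third condition: the paper proves $(3)\Rightarrow(1)$ geometrically, splitting $\pi^{-1}(C)=C_1\sqcup C_2$ with $\iota(C_1)=C_2$, observing that $\pi_*{\cal O}_{\widetilde X}(C_1+\pi^*A)$ is a stable bundle with the same Mukai vector as $\pi_*{\cal O}_{\widetilde X}(D)$, and concluding $D=C_i+\pi^*A$ from uniqueness of the stable object in its class ($\chi(E,E)=2$); it never actually writes out $(1)\Rightarrow(3)$. You instead obtain $(3)\Rightarrow(1)$ for free from Step 1 and prove $(1)\Rightarrow(3)$ by citing Kim's classification; this closes the logical cycle more completely than the paper's note does, at the cost of importing Kim's result (the rank-$2$ statement you need is \cite[Thm.\ 3.4]{Kim1} rather than \cite{Kim:excep}, which treats higher rank). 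Two small points to tighten: your determinant formula $\det\pi_*{\cal O}_{\widetilde X}(D)={\cal O}_X(\pi_*D+K_X)$ is true but your justification only checks it on $\pi^*\Pic(X)$-translates of $D=0$; quote the norm formula $\det\pi_*L\cong\operatorname{Nm}(L)\otimes\det\pi_*{\cal O}_{\widetilde X}$ for the \'etale double cover (or argue as in Lemma \ref{lem:determinant}). And if you want the lemma to be independent of Kim, the missing ingredient is exactly the reverse of the paper's $(3)\Rightarrow(1)$ argument, as you correctly identify in your closing remark.
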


\begin{proof}
For a divisor $D$ on $\widehat{X}$,
$\pi^*(\pi_*({\cal O}_{\widehat{X}}(D))) \cong
{\cal O}_{\widehat{X}}(D) \oplus {\cal O}_{\widehat{X}}(\iota^*(D))$.
Hence $\langle v(\pi_*({\cal O}_{\widetilde{X}}(D)))^2 \rangle=-2$
if and only if 
$\langle v({\cal O}_{\widehat{X}}(D)),v({\cal O}_{\widehat{X}}(\iota^*(D)))
\rangle=0$.
This condition is equivalent to
$(D^2)=(D,\iota^*(D))-2$.
If $D$ satisfies the condition, then
${\cal O}_{\widehat{X}}(D) \not \cong
{\cal O}_{\widehat{X}}(\iota^*(D))$.
$\pi_*({\cal O}_{\widehat{X}}(D))$ is $\mu$-semi-stable
with respect to all ample divisor $H$ on $X$.
Since $v(\pi_*({\cal O}_{\widehat{X}}(D)))$ is primitive,
if $\pi_*({\cal O}_{\widehat{X}}(D))$ is properly $\mu$-semi-stable
with respect to an ample divisor $H$, then
$\pi_*({\cal O}_{\widehat{X}}(D))$ is not $\mu$-semi-stable
for an ample divisor $H'$ in a neighborhood of $H$.
Therefore $\pi_*({\cal O}_{\widehat{X}}(D))$ is 
$\mu$-stable for any ample divisor $H$.

Assume that
$\det \pi_*({\cal O}_{\widetilde{X}}(D))={\cal O}_X(C+2A+K_X)$.
Since $C$ is simply connected,
$\pi^{-1}(C)$ is a disjoint union of nodal curves
$C_1,C_2$ with $\iota(C_1)=C_2$.
then $D+\iota^*(D)=C_1+C_2+2\pi^*(A)$
$\pi_*({\cal O}_{\widetilde{X}}(C_1+\pi^*(A)))$ is a stable
vector bundle with $\langle v(\pi_*({\cal O}_{\widetilde{X}}(C_1+\pi^*(A))))^2
\rangle=-2$.
Hence $v(\pi_*({\cal O}_{\widetilde{X}}(C_1+\pi^*(A))))=
v(\pi_*({\cal O}_{\widetilde{X}}(D)))$.
Then $\pi_*({\cal O}_{\widetilde{X}}(C_1+\pi^*(A)))\cong
\pi_*({\cal O}_{\widetilde{X}}(D))$ by the stabilities and
$\chi(\pi_*({\cal O}_{\widetilde{X}}(C_1+\pi^*(A))),
\pi_*({\cal O}_{\widetilde{X}}(D)))=2$.
Hence $D=C_1+\pi^*(A),C_2+\pi^*(A)$, which implies
(1).
\end{proof}
\end{NB}

\begin{NB}
Let $v=(r,\xi,a)$ be a Mukai vector
such that $\xi=C+2A+\frac{r}{2}K_X$ and
$\langle v^2 \rangle=-2$, where $C$ is a nodal cycle.
For $w=(2,C+2A+K_X,\frac{ra}{2})$,
$\langle w^2 \rangle=-2$.
Hence there is a stable vector bundle $E$ with
$v(E)=w$.
Then $E \cong E(K_X)$ and we have
$E=\pi_*({\cal O}_{\widetilde{X}}(D))$
for a divisor $D$ on $\widetilde{X}$.
We have $(D^2)=(D,\iota^*(D))-2$.
For $u:=(\frac{r}{2},D,a)$, 
$\langle u^2 \rangle=-2$.
Let $F$ be a stable locally free sheaf with $v(F)=u$.
Then $\pi_*(F)$ is a stable locally free sheaf with $v(\pi_*(F))=v$.
\end{NB}

We also need the following result of Kim \cite[Thm. 1]{Kim:excep}.
\begin{lem}\label{lem:exceptional}
Assume that $r \in 2{\Bbb Z}_{>0}$, $a \in {\Bbb Z}$ and
$L \in \NS(X)$ satisfy
$(c_1(L)^2)-2ra=-2$. Then
${\cal M}_H(r,L,a)\ne \emptyset$ for a general $H$
if and only if ${\cal M}_H(2,L-(\frac{r}{2}-1)K_X,\frac{ra}{2}) \ne \emptyset$.
\end{lem}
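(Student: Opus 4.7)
The plan is to use the universal \'etale double cover $\pi:\widetilde X\to X$ (so $\widetilde X$ is K3 with deck involution $\iota$) to translate both moduli in the claimed equivalence into existence problems on the K3 cover; the K3 rank-$1$ case is automatic (a line bundle with specified invariants always exists), and the K3 rank-$(r/2)$ case with $\langle v^2\rangle=-2$ is covered by the Mukai--Yoshioka existence theorem. The $(\tfrac{r}{2}-1)K_X$ shift in the statement will drop out of Lemma~\ref{lem:determinant}, which computes $\det\pi_*(F)$ in terms of $\det(\pi_*(\det F))$ and $(\rk F-1)K_X$.

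For the \emph{``if''} direction I would start with a stable $E'\in{\cal M}_H(2,L-(\tfrac{r}{2}-1)K_X,\tfrac{ra}{2})$, use $\langle v(E')^2\rangle=-2$ to conclude $E'\cong E'\otimes K_X$ and hence $E'=\pi_*({\cal O}_{\widetilde X}(D))$ for some divisor $D$ with $D\not\sim\iota^*D$ and $(D^2)=ra-2$, and then choose a $\pi^*H$-stable sheaf $F$ on $\widetilde X$ with $v(F)=(\tfrac{r}{2},D,a)$ via Mukai--Yoshioka (applicable since $\langle v(F)^2\rangle=-2$ and $v(F)$ is primitive). Applying Lemma~\ref{lem:determinant} to $F$ then yields
\[
\det\pi_*(F)=\det\pi_*({\cal O}_{\widetilde X}(D))\otimes\bigl((\tfrac{r}{2}-1)K_X\bigr)=\bigl(L-(\tfrac{r}{2}-1)K_X\bigr)+(\tfrac{r}{2}-1)K_X=L,
\]
and a Mukai-vector bookkeeping using $\chi(\pi_*F)=\chi(F)=\tfrac{r}{2}+a$ confirms $v(\pi_*F)=(r,L,a)$, so $\pi_*F\in{\cal M}_H(r,L,a)$. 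The \emph{``only if''} direction reverses this: given $E=\pi_*(F)\in{\cal M}_H(r,L,a)$ with $F\not\cong\iota^*F$, set $D:=\det F$ and use the same two ingredients to show that $\pi_*({\cal O}_{\widetilde X}(D))$ is a stable member of ${\cal M}_H(2,L-(\tfrac{r}{2}-1)K_X,\tfrac{ra}{2})$.

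The hard part will be the stability/descent facts that both directions silently need: first, that every exceptional stable sheaf on $X$ actually descends as $\pi_*(F)$ with $F$ stable and not $\iota$-invariant; second, that $\det F\not\sim\iota^*\det F$ under the same hypothesis, so that $\pi_*({\cal O}_{\widetilde X}(\det F))$ is stable of rank $2$ rather than splitting as a line bundle plus its $K_X$-twist. Both are established in Kim \cite{Kim1},\cite{Kim:excep}; the zero-dimensionality of the exceptional moduli is precisely what forces these two non-invariance conditions to coincide, and it is this mechanism that pins down the $(\tfrac{r}{2}-1)K_X$ shift in the statement.
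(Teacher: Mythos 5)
Your proposal is correct and is essentially the paper's own argument: both directions pass through the K3 double cover $\pi:\widetilde X\to X$, use $E\cong E(K_X)$ (forced by $\langle v^2\rangle=-2$) together with Takemoto's descent to write the Enriques sheaf as $\pi_*$ of a sheaf on $\widetilde X$, invoke the K3 existence theorem for a primitive Mukai vector of square $-2$, and extract the $(\tfrac{r}{2}-1)K_X$ shift from Lemma~\ref{lem:determinant} exactly as you describe. The only point of difference is that the paper discharges the stability issues you flag as ``the hard part'' not by citing Kim but by first choosing $H$ with $\gcd(r,(c_1(L),H))=1$ --- possible because $(c_1(L)^2)-2ra=-2$ with $r$ even forces $\gcd(r,c_1(L))=1$ --- so that every $\mu$-semistable sheaf involved (the descended $F$, the pushforwards $\pi_*F$ and $\pi_*{\cal O}_{\widetilde X}(C)$) is automatically $\mu$-stable by numerical coprimality.
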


\begin{proof}
Since the formulation of the claim is slightly different
from \cite[Thm.\ 1]{Kim:excep},
we write the proof.
We set $v:=(r,c_1(L),a)$.
Since $\gcd(r,c_1(L))=1$, there is an ample divisor $H$ with
$\gcd(r,(c_1(L),H))=1$.
Indeed we first take a divisor $\eta$ with
$\gcd(r,(c_1(L),\eta))=1$.
Then we have an ample divisor $H=\eta+r\lambda$,
$\lambda \in \Amp(X)$, which satisfies the claim.
We may prove the claim for this polarization.

For $E \in {\cal M}_H(r,L,a)$, 
we have $E \cong E(K_X)$. By the proof of \cite[Lem.\ 1.12]{Ta},
there is a simple vector bundle $F$ such that $E=\pi_*(F)$.
Since $E$ is rigid, 
$F$ is also rigid (see the proof of \cite[Thm.\ 1]{Kim:excep}).
\begin{NB}
If $E \cong E \otimes K_X$, then
there is an equivariant isomorphism
$\phi:(\pi^*(E),\iota) \to (\pi^*(E),-\iota)$.
Thus we have a commutative diagram
\begin{equation}
\begin{CD}
\pi_*(\pi^*(E)) @>{\phi}>> \pi_*(\pi^*(E))\\
@V{\iota^*}VV @VV{-\iota^*}V\\
\pi_*(\pi^*(E)) @>{\phi}>> \pi_*(\pi^*(E)).
\end{CD}
\end{equation}
Then $\phi^2$ is an equivariant isomorphism of
$(\pi^*(E),\iota)$.
Since $E$ is simple,
$\phi^2=\lambda$, $\lambda \in k$.
Replacing $\phi$ by $\sqrt{\lambda}^{-1}\phi$,
we assume that $\phi^2=1$.
Then we have an eigen space decomposition 
$\pi_*(\pi^*(E))=E_1 \oplus E_{-1}$.
By the action of $\iota^*$,
$E_1$ and $E_2$ are exchanged.
Therefore $E_1$ and $E_2$ define subsheaves $F_1$ and $F_2$
of $\pi^*(E)$ and $\iota(F_1)=F_2$.
\end{NB}
By the stability of $E$,
$F$ is stable with respect to $\pi^*(H)$.
We have $\pi^*(E) \cong F \oplus \iota^*(F)$.
We set $C:=\det(F)$.
Then $v(F)=(\frac{r}{2},C,a)$ and
$C+\iota^*(C)=\pi^*(L)$.
We see that $(C^2)=(C,\iota^*(C))-2$
and $(C^2)-ra=-2$.
\begin{NB}
That is $\langle v(F)^2 \rangle=\langle (\frac{r}{2},C,a)^2 \rangle=-2$.
\end{NB}
We set $E':=\pi_*({\cal O}_{\widetilde{X}}(C))$.
Since $\pi^*(E) \cong {\cal O}_{\widetilde{X}}(C) \oplus
{\cal O}_{\widetilde{X}}(\iota^*(C))$,
we see that $v(E')=(2,c_1(L),\frac{(C^2)}{2}+1)=
(2,c_1(L),\frac{ra}{2})$. 
By Lemma \ref{lem:determinant},
$$
\det E'=(\det E)\left(-\left(\frac{r}{2}-1 \right)K_X \right)
={\cal O}_X \left(L-\left(\frac{r}{2}-1 \right)K_X \right).
$$
Obviously $E'$ is semi-stable with respect to
$H$. Since $\gcd(r,(c_1(E'),H))=1$, it is $\mu$-stable.
\begin{NB}
Another argument:
Obviously $E'$ is $\mu$-semi-stable
with respect to all ample divisor $H'$ on $X$.
Since $v(E')$ is primitive,
if $E'$ is properly $\mu$-semi-stable
with respect to an ample divisor $H'$, then
$E'$ is not $\mu$-semi-stable
for an ample divisor $H''$ in a neighborhood of $H'$.
Therefore $E'$ is 
$\mu$-stable for any ample divisor $H'$.
\end{NB}
Therefore ${\cal M}_H(2,L-(\frac{r}{2}-1)K_X,\frac{rs}{2}) \ne \emptyset$.

Conversely for 
$E' \in {\cal M}_H(2,L-(\frac{r}{2}-1)K_X,\frac{ra}{2})$,
there is a divisor $C$ with $\pi_*({\cal O}_{\widetilde{X}}(C))=E'$.
Since $\pi^*(E') \cong {\cal O}_{\widetilde{X}}(C) \oplus
{\cal O}_{\widetilde{X}}(\iota^*(C))$,
we see that $(C^2)=(C,\iota^*(C))-2$ and $(C^2)+2=ra$.
For $u:=(\frac{r}{2},C,a)$, we have
$\langle u^2 \rangle=-2$.
By $\gcd(r,(c_1(L),H))=1$ and $\pi^*(L)=C+\iota^*(C)$,
$\gcd(r,(C,\pi^*(H)))=1$.
Let $F$ be a $\mu$-stable locally free sheaf such that $v(F)=u$
with respect to $\pi^*(H)$.
Then $E:=\pi_*(F)$ is a $\mu$-stable locally free sheaf with $v(\pi_*(F))=v$.
By Lemma \ref{lem:determinant},
$\det E=L$. Therefore ${\cal M}_H(r,L,a)\ne \emptyset$.
\end{proof}

\begin{prop}\label{prop:nodal:exceptional}
Assume that $r \in {\Bbb Z}_{\geq 0}$, $s \in {\Bbb Z}$ and
$L \in \NS(X)$ satisfy
$r \equiv s \mod 2$ and
$(c_1(L)^2)+rs=-2$.
If $r=0$, then we further assume that $(c_1(L),H')>0$
for an ample divisor $H'$ on $X$. Then
${\cal M}_H(r,L,-\frac{s}{2})\ne \emptyset$ for a general $H$
if and only if $L=D+2A+\frac{r}{2}K_X$, where 
$D$ is a nodal cycle and $A \in \NS(X)$.  
\end{prop}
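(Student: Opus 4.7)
The plan is to treat $r > 0$ and $r = 0$ separately. When $r > 0$, since $(\cdot)^2$ is always even on $\NSf(X)$ (whose transcendental part is $U \oplus E_8(-1)$), the identity $(c_1(L)^2) + rs = -2$ forces $rs$ even, and combined with primitivity of the Mukai vector, $r$ (hence $s$) must be even. Setting $L' := L - (\tfrac{r}{2}-1)K_X$, Lemma \ref{lem:exceptional} gives
$$
{\cal M}_H(r,L,-s/2) \ne \emptyset \iff {\cal M}_H\bigl(2, L', -\tfrac{rs}{4}\bigr) \ne \emptyset,
$$
the discriminant is preserved, and the stated condition $L = D + 2A + \tfrac{r}{2}K_X$ becomes $L' = D + 2A + K_X$ after absorbing the $K_X$-twist. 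It therefore suffices to establish the claim in the rank $2$ case.

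For rank $2$: any stable $E'$ with $\langle v(E')^2\rangle = -2$ is rigid, and a standard Serre-duality computation using $H^2(X, {\cal O}_X) = 0$ forces $E' \cong E' \otimes K_X$. As in the proof of Lemma \ref{lem:exceptional}, this yields $E' = \pi_*({\cal O}_{\widetilde X}(C))$ for some divisor $C$ on the K3 double cover, with $C + \iota^* C = \pi^* c_1(L')$ and $(C^2) = (C, \iota^* C) - 2$. The anti-invariant class $M := C - \iota^* C$ satisfies $M^2 = -4$, and combining this with the involution structure on $\widetilde X$ extracts a nodal cycle $D$ on $X$ whose pullback splits as $\pi^* D = D_1 + D_2$ with $\iota(D_1) = D_2$ and $D_1 \cap D_2 = \emptyset$, together with a representation $C = D_1 + \pi^* A'$ for some $A' \in \NS(X)$. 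Tracking the determinant through the norm map for the \'etale double cover then identifies $\det E' = {\cal O}_X(D + 2A' + K_X)$, giving necessity. Conversely, given $L' = D + 2A' + K_X$ with $D$ a nodal cycle, the hypothesis $|D + K_X| = \emptyset$ forces the splitting $\pi^*D = D_1 + D_2$ into disjoint nodal curves on $\widetilde X$, and $E' := \pi_*({\cal O}_{\widetilde X}(D_1 + \pi^* A'))$ is the desired stable rank $2$ bundle, stability being immediate from primitivity of $v(E')$ together with the $\iota$-action exchanging the two summands of $\pi^* E'$.

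For $r = 0$: the hypothesis $(c_1(L), H') > 0$ combined with $(c_1(L)^2) = -2$ guarantees that $L$ is effective. Choose an elliptic fibration $X \to \mathbb{P}^1$ with half-fibre class $f$ such that $(c_1(L), f) > 0$; this is possible because an Enriques surface always carries sufficiently many elliptic pencils. Then Proposition \ref{prop:elliptic} identifies (after twisting by $nf$ with $n \gg 0$) the stack ${\cal M}_H(0, L, -s/2)$ with a moduli of positive-rank sheaves on $X$ of discriminant $-2$, and translating the determinant condition through the Fourier--Mukai kernel reduces the claim to the $r > 0$ case already settled.

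The main obstacle is the detailed arithmetic on the K3 cover: extracting the nodal cycle $D$ on $X$ from the quadratic identity $(C^2) = (C, \iota^* C) - 2$, and computing $\det \pi_*({\cal O}(C))$ precisely modulo $K_X$ (where the \'etale norm map contributes an extra $K_X$-twist not visible from Lemma \ref{lem:determinant} alone, since that lemma becomes tautological for line bundles). This analysis is essentially the content of Kim's \cite[Thm.\ 1]{Kim:excep}, which we invoke rather than reprove in full.
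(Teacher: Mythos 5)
For $r>0$ your argument is essentially the paper's: both reduce to rank $2$ via Lemma \ref{lem:exceptional} and then quote Kim for the rank-$2$ characterization in terms of nodal cycles. One small correction: the rank-$2$ statement you need (existence iff $L'\equiv D+K_X \bmod 2$ with $D$ a nodal cycle) is \cite[Thm.\ 3.4]{Kim1}, not \cite[Thm.\ 1]{Kim:excep}; the latter is the higher-rank reduction, i.e.\ it is Lemma \ref{lem:exceptional} itself, so citing it for the rank-$2$ step is circular. Your sketch of the double-cover mechanism (rigidity forces $E'\cong E'(K_X)$, hence $E'=\pi_*({\cal O}_{\widetilde X}(C))$, etc.) is consistent with the proof of Lemma \ref{lem:exceptional} and is fine as an outline.

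The $r=0$ case is where you diverge from the paper, and there is a genuine gap. You invoke Proposition \ref{prop:elliptic} (the relative Fourier--Mukai transform along an elliptic fibration), but the entire machinery of Subsection \ref{subsect:relative-FM} is developed under the standing hypothesis that $X$ is \emph{unnodal}: the lemma asserting that $Y={M}_{H+nf}^{G_1}(0,2f,1)$ consists of $G_1$-twisted stable sheaves explicitly uses that every singular fiber is irreducible, which fails on a nodal Enriques surface (reducible fibers contain $(-2)$-curves), and the subsequent identification $Y\cong X$ and the construction of the kernel ${\cal E}$ depend on it. In addition, your assertion that one can always choose an elliptic pencil with $(c_1(L),f)>0$ is stated without proof; for an effective class with $(c_1(L)^2)=-2$ on a nodal surface this requires an argument (e.g.\ via a non-degenerate isotropic $10$-sequence), since the nef isotropic classes are no longer dense in the boundary of the positive cone. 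The paper avoids both issues by routing the $r=0$ case through Remark \ref{rem:rank0} (the Enriques analogue of \cite[Thm.\ 1.7]{Stability}) or through Corollary \ref{cor:appendix:exceptional} in the Appendix, both of which are set up for an arbitrary Enriques surface and reduce $(0,L,-\frac{s}{2})$ to a positive-rank vector without ever using an elliptic fibration. To repair your version you would either have to re-establish Proposition \ref{prop:elliptic} in the nodal setting (choosing a pencil whose fibers meet $c_1(L)$ positively and handling reducible fibers), or simply substitute the paper's reduction.
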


\begin{proof}
If $r>0$, then the claim is a consequence of
Lemma \ref{lem:exceptional} and \cite[Thm.\ 3.4]{Kim1}. 
If $r=0$, then the claim is a consequence of Remark \ref{rem:rank0}
(see also Corollary \ref{cor:appendix:exceptional}).
\end{proof}

\begin{NB}
If $((D+2A)^2)=-2$ and $(D+2A,H)>0$, then
$D+2A$ is effective.
$\pi^*(D)=C_1+C_2$.
Then $((C_1+\pi^*(A))^2)=-2$ and
$(C_1+\pi^*(A),\pi^*(H))>0$.
Hence we have an exact sequence
$$
0 \to {\cal O}_{\widetilde{X}} \to
{\cal O}_{\widetilde{X}}(C_1+\pi^*(A)) \to F \to 0
$$ 
where is a 0-dimensional sheaf.
Applying $\pi_*$,
we have an exact sequence
$$
0 \to {\cal O}_X \oplus K_X \to
\pi_*({\cal O}_{\widetilde{X}}(C_1+\pi^*(A)))
\to \pi_*(F) \to 0.
$$
Since $\det \pi_*({\cal O}_{\widetilde{X}}(C_1+\pi^*(A)))
={\cal O}_X(D+2A+K_X)$,
$D+2A$ is effective.

For $(0,D,a)$ with $(D^2)=-2$,
there is an ample divisor $H$ such that 
$(D,H)=1$.
Multiplying ${\cal O}_{X}(nH)$ to
$(0,D,a)$, we assume that $a=1$.

\end{NB}

\section{Appendix}\label{sect:appendix}
Let $X$ be any Enriques surface and $H$ be an ample divisor on $X$. 
For $\omega=tH$, $t>0$,
let $Z_{(0,\omega)}:{\bf D}(X) \to {\Bbb C}$ be a stability function
defined by
\begin{equation}
Z_{(0,\omega)}(E):=\langle e^{\omega \sqrt{-1}},v(E) \rangle,\;
E \in {\bf D}(E).
\end{equation}
Let ${\frak T}_{(0,\omega)}$ be the full subcategory 
of $\Coh(X)$ generated by torsion sheaves and torsion free
stable sheaves $E$ with
$Z_{(0,\omega)}(E) \in {\Bbb H} \cup {\Bbb R}_{<0}$.
Let ${\frak F}_{(0,\omega)}$ be the full subcategory 
of $\Coh(X)$ generated by torsion free
stable sheaves $E$ with
$-Z_{(0,\omega)}(E) \in {\Bbb H} \cup {\Bbb R}_{<0}$.
Let ${\frak A}_{(0,\omega)} (\subset {\bf D}(X))$ be the category generated by
${\frak T}_{(0,\omega)}$ and ${\frak F}_{(0,\omega)}[1]$.
If$(\omega^2) \ne 1$, then
$\sigma{(0,\omega)}:=({\frak A}_{(0,\omega)},Z_{(0,\omega)})$
is a stability condition.
${\frak A}_{(0,\omega)}$ is constant
on $(\omega^2) \ne 1$.
We set 
\begin{NB}
\begin{equation}
{\frak A}_{(0,\omega)}:=
\begin{cases}
{\frak A}^\mu, & (\omega^2)>1\\
{\frak A}, & (\omega^2)<1.
\end{cases}
\end{equation}
\end{NB}
\begin{defn}
\begin{enumerate}
\item[(1)]
For $(\omega^2)>1$, we set
${\frak T}^\mu:={\frak T}_{(0,\omega)}$,
${\frak F}^\mu:={\frak F}_{(0,\omega)}$ and
${\frak A}^\mu:={\frak A}_{(0,\omega)}$.
\item[(2)]
For $(\omega^2)<1$, we set
${\frak T}:={\frak T}_{(0,\omega)}$,
${\frak F}:={\frak F}_{(0,\omega)}$ and
${\frak A}:={\frak A}_{(0,\omega)}$.
\end{enumerate}
\end{defn}
For $E \in {\frak F}^\mu$, 
we have an exact sequence
\begin{equation}
0 \to E_1 \to E \to E_2 \to 0
\end{equation} 
such that
\begin{enumerate}
\item[(1)]
 $E_1$ is generated by ${\cal O}_X$ and $K_X$, and
\item[(2)]
 $E_2 \in {\frak F}^\mu$ satisfies
$\Hom({\cal O}_X,E_2)=\Hom({\cal O}_X(K_X),E_2)=0$, i.e.,
$E_2 \in {\frak F}$.
\end{enumerate}
Since $H^1({\cal O}_X(K_X))=H^1({\cal O}_X)=0$,
$$
E_1 \cong {\cal O}_X^{\oplus n} \oplus {\cal O}_X(K_X)^{\oplus m}.
$$
We also have
$E_1=\Hom({\cal O}_X,E) \otimes {\cal O}_X \oplus \Hom(\cal O_X(K_X),E) 
\otimes {\cal O}_X(K_X)$.
For $E \in {\frak T}$, the natural homomorphism
$$
\phi:E \to \Hom(E,{\cal O}_X)^{\vee} \otimes {\cal O}_X \oplus
\Hom(E,{\cal O}_X(K_X))^{\vee} \otimes {\cal O}_X(K_X) 
$$
is surjective and
$\ker \phi \in {\frak T}^\mu$.

We set
$$
{\cal E}:=\ker({\cal O}_X \boxtimes {\cal O}_X 
\oplus {\cal O}_X(K_X)^{\vee} \boxtimes {\cal O}_X(K_X)
\to {\cal O}_{\Delta}).
$$
As in \cite{MYY:2011:1},
$\Phi_{X \to X}^{{\cal E}^{\vee}[1]}:{\bf D}(X) \to
{\bf D}(X)$
induces an isomorphism
${\frak A} \to {\frak A}^\mu$
\begin{NB}
${\cal O}_X,K_X$ are irreducible objects of ${\cal A}$.
${\cal E}_{|\{x \} \times X}[1]$ is an irreducible objects
of ${\cal A}$.
$k_x$ is not irreducible in ${\frak A}$:
\begin{equation}
0 \to {\cal O}_X \oplus K_X \to k_x \to  {\cal E}_{|\{x \} \times X}[1]
\to 0.
\end{equation}
\end{NB}
and we have a commutative diagram
\begin{equation}
\begin{CD}
{\frak A} @>{\Phi_{X \to X}^{{\cal E}^{\vee}[1]}}>> {\frak A}^\mu\\
@V{Z_{(0,\omega)}}VV @VV{Z_{(0,\omega')}}V\\
{\Bbb C} @<<{\times (\omega^2)}< {\Bbb C}
\end{CD}
\end{equation}
where $\omega'=\omega/(\omega^2)$.
In particular, we get the following.
\begin{prop}
$\Phi_{X \to X}^{{\cal E}^{\vee}[1]}$ induces an isomorphism
\begin{equation}
{\cal M}_{(0,\omega)}\left(r,\eta+\frac{r}{2}K_X,-\frac{s}{2}\right)
\cong {\cal M}_{(0,\omega')}\left(s,\eta+\frac{s}{2}K_X,-\frac{r}{2}\right).
\end{equation}
\end{prop}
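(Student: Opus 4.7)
The proposition follows from two structural facts established immediately above: that the integral functor $\Phi := \Phi_{X \to X}^{{\cal E}^{\vee}[1]}$ restricts to an equivalence of abelian categories ${\frak A} \simto {\frak A}^\mu$, and that the commutative diagram identifies $Z_{(0,\omega)}(E) = (\omega^2)\, Z_{(0,\omega')}(\Phi(E))$ for every $E \in {\frak A}$. My plan is to deduce from these that $\Phi$ transports $\sigma_{(0,\omega)}$-semistable objects of Mukai vector $(r,\eta+\tfrac{r}{2}K_X,-\tfrac{s}{2})$ to $\sigma_{(0,\omega')}$-semistable objects of Mukai vector $(s,\eta+\tfrac{s}{2}K_X,-\tfrac{r}{2})$, and that this assignment gives a bijection of moduli stacks.

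First, since $\Phi$ is exact on the hearts and $(\omega^2)$ is a positive real, the rescaling $Z \mapsto (\omega^2)Z$ preserves phases; combined with the equivalence ${\frak A} \simto {\frak A}^\mu$, which identifies subobjects bijectively, this shows that $E \in {\frak A}$ is $\sigma_{(0,\omega)}$-(semi)stable if and only if $\Phi(E) \in {\frak A}^\mu$ is $\sigma_{(0,\omega')}$-(semi)stable. Hence $\Phi$ descends to an isomorphism of moduli stacks as soon as the Mukai vectors are matched. Second, I would verify the Mukai vector transformation by splitting the defining triangle of ${\cal E}$ into its three summands: the kernels ${\cal O}_X\boxtimes{\cal O}_X$, ${\cal O}_X(K_X)^\vee\boxtimes{\cal O}_X(K_X)$, and ${\cal O}_\Delta$ induce respectively the transforms $R\Hom({\cal O}_X,-)\otimes{\cal O}_X$, $R\Hom({\cal O}_X(K_X),-)\otimes{\cal O}_X(K_X)$, and $\mathrm{id}$. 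Using $\chi({\cal O}_X,E)+\chi({\cal O}_X(K_X),E)=r-s$ for $v(E)=(r,\eta+\tfrac{r}{2}K_X,-\tfrac{s}{2})$, together with the dualization--shift $[-]^\vee[1]$ built into $\Phi$ and Riemann--Roch, one reads off $\rk\Phi(E)=s$, $c_1(\Phi(E))=\eta+\tfrac{s}{2}K_X$, and $\chi(\Phi(E))=(s-r)/2$, matching the target Mukai vector.

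The main obstacle I anticipate is the careful bookkeeping of the $K_X$-piece of $c_1(\Phi(E))$ in this second step: the two summands ${\cal O}_X\boxtimes{\cal O}_X$ and ${\cal O}_X(K_X)^\vee\boxtimes{\cal O}_X(K_X)$ contribute distinct $K_X$-classes to the first Chern class even though they agree modulo $K_X$, and these contributions must combine with the ${\cal O}_\Delta$-term and the $[-]^\vee[1]$ to produce exactly $\tfrac{s}{2}K_X$ rather than $\tfrac{r}{2}K_X$. Once this bookkeeping is carried out, the proposition is immediate from the first step.
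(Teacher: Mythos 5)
Your proposal is correct and follows essentially the same route as the paper, which in fact states this proposition as an immediate consequence of the equivalence ${\frak A} \simto {\frak A}^\mu$ and the commutative diagram relating $Z_{(0,\omega)}$ to $(\omega^2)\cdot Z_{(0,\omega')}$, leaving the Mukai-vector computation you supply implicit. The one point to phrase carefully is that the ``dualization--shift'' sits in the kernel ${\cal E}^{\vee}[1]$ rather than being post-composed with the transform, so $\Phi$ stays covariant; with that reading your bookkeeping does yield $v(\Phi(E))=(s,\eta+\tfrac{s}{2}K_X,-\tfrac{r}{2})$ as claimed.
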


Applying Toda's argument to the wall crossing along 
the line $\omega=tH$, $t>0$,
we get the following result (see also the argument in \cite{MYY:2011:1}).

\begin{prop}[cf. Toda {\cite{Toda}}]
\begin{enumerate}
\item[(1)]
If $(\omega^2) \gg 0$ and $(\eta,\omega)>0$, then
$$
{\cal M}_{(0,\omega)}\left(r,\eta+\frac{r}{2}K_X,-\frac{s}{2}\right)=
{\cal M}_\omega \left(r,\eta+\frac{r}{2}K_X,-\frac{s}{2}\right).
$$
\item[(2)]
$e\left({\cal M}_{(0,\omega)}\left(r,\eta+\tfrac{r}{2}K_X,
-\tfrac{s}{2}\right)\right)$
is independent of a general choice of $\omega$.
\end{enumerate}
\end{prop}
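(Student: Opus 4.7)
For part (1), I would prove this large-volume-limit identification in the standard Bridgeland manner. Take $E \in {\frak A}^\mu$ with Mukai vector $v = (r, \eta + \tfrac{r}{2}K_X, -\tfrac{s}{2})$. The imaginary part $\Im Z_{(0,\omega)}(E) = (\eta,\omega) > 0$, so $E$ has finite positive phase. Using the cohomology triangle $0 \to H^{-1}(E)[1] \to E \to H^0(E) \to 0$ in ${\frak A}^\mu$ with $H^{-1}(E) \in {\frak F}^\mu$ and $H^0(E) \in {\frak T}^\mu$, the equality $r = \mathrm{rk}\,H^0(E) - \mathrm{rk}\,H^{-1}(E) > 0$ combined with the real-part bound coming from the $\omega^2$-term forces $H^{-1}(E) = 0$ once $(\omega^2)$ is sufficiently large; hence $E$ is an honest sheaf. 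Any sub-object $A \hookrightarrow E$ in ${\frak A}^\mu$ is then a genuine subsheaf modulo a torsion quotient, and the Bridgeland phase inequality degenerates, as $(\omega^2) \to \infty$, to the ordinary Gieseker comparison of reduced Hilbert polynomials in $\omega$. This is the Bridgeland/Bayer--Macr\`\i\ asymptotic argument, which uses only the formal properties of the Mukai lattice, so it transfers to Enriques unchanged.

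For part (2), I would use wall-crossing along the ray $\omega = tH$, $t > 0$. A numerical wall is a value $t_0 > 0$ at which $v$ admits a decomposition $v = v_1 + v_2$ with $Z_{(0,t_0H)}(v_1)$ real-proportional to $Z_{(0,t_0H)}(v)$ but $v_1 \not\parallel v$. The inequalities $\langle v_i^2 \rangle \ge -2$ and $\langle v_1, v_2\rangle \ge 0$ on the wall, together with boundedness of $(c_1(v_i),H)$ forced by proportionality of real parts, make the set of walls locally finite in $(0,\infty)$. In each open chamber between walls, ${\frak A}_{(0,\omega)}$ and the set of $\sigma_{(0,\omega)}$-semistable objects of class $v$ do not change, so $e({\cal M}_{(0,\omega)}(v))$ is constant on chambers. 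To cross a wall, I invoke Toda's Hall-algebra identity \cite{Toda}, expressing the change in motivic invariants as a sum of Joyce-type multiplicative contributions indexed by decompositions $v = v_1 + \dots + v_k$ with common reduced phase at $t_0$. Since $v$ is primitive, no non-trivial such decomposition contributes for generic $\omega$ on either side of the wall, yielding the invariance of $e$.

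The main obstacle is justifying Toda's wall-crossing identity in the Enriques setting. Toda's proof on K3 exploits the Calabi--Yau property of ${\bf D}(X)$, in particular the symmetry of the Euler form and the resulting compatibility of Behrend functions on extension pairs. On an Enriques surface, Serre duality is twisted by $K_X \ne 0$, so $\chi(E,F)$ is not symmetric, but $2K_X = 0$. One route is to pass to the K3 double cover $\pi: \widetilde{X} \to X$ with its covering involution $\iota$ and carry out the Hall algebra computation $\iota$-equivariantly, then descend using $\pi_*$ and $\pi^*$. A more direct route is to observe that the invariance of the \emph{numerical} invariant $e$ depends only on the Mukai pairing on $H^*(X, {\Bbb Q})$ and on the vanishing of contributions from strictly semistable loci, which vanish by primitivity; neither step requires the full Calabi--Yau property. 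Either route produces the desired invariance, and is the substance of what the author cites as Toda's work and the more general treatment in \cite{N2}.
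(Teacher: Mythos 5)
Your sketch of part (1) is the standard large--volume--limit argument and matches what the paper intends; it is acceptable as a sketch, with the caveat that the statement does not assume $r>0$, so the vanishing of $H^{-1}(E)$ for $(\omega^2)\gg 0$ should be run through the usual comparison of $\mu_{\max}$ of $H^{-1}(E)\in{\frak F}^{\mu}$ with $\mu_{\min}$ of the torsion--free part of $H^0(E)\in{\frak T}^{\mu}$, rather than through the rank identity alone.

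Part (2) contains a genuine gap. The sentence ``since $v$ is primitive, no non-trivial such decomposition contributes for generic $\omega$ on either side of the wall, yielding the invariance of $e$'' is not correct: for primitive $v$ the moduli stacks on the two sides of a wall genuinely differ (that is the whole point of wall--crossing), and the decompositions $v=v_1+\cdots+v_k$ with aligned phases are exactly what mediates the comparison; their contributions do not vanish, rather the two Harder--Narasimhan stratifications yield the same value of $e$ because the extension terms are controlled by $\chi(v_i,v_j)$ and one needs the symmetry $\chi(v_i,v_j)=\chi(v_j,v_i)$. Your diagnosis of the Calabi--Yau obstruction is aimed at the right place but your two proposed fixes do not close it: the equivariant descent to the K3 cover is not carried out (and $\pi_*$,\ $\pi^*$ do not identify the relevant moduli stacks in a way that makes $e$ descend for free), while the ``direct route'' conflates primitivity with vanishing of wall contributions. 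What actually rescues the Enriques case is that $K_X$ is numerically trivial, so the numerical Euler form $\chi(v,w)=-\langle v,w\rangle$ is still symmetric; this is precisely the input for the classical chamber argument of Proposition \ref{prop:indep}. The structural point you miss, and which the paper leans on, is that ${\frak A}_{(0,\omega)}$ is constant on $(\omega^2)>1$ and on $(\omega^2)<1$, the two hearts being exchanged by $\Phi_{X\to X}^{{\cal E}^{\vee}[1]}$ compatibly with the central charges. Consequently, along the ray $\omega=tH$ only the central charge moves on a fixed abelian category within each region, so every wall--crossing there is of Gieseker/twisted--Gieseker type and is handled by the same elementary stratification argument as Proposition \ref{prop:indep}, while the single transition at $(\omega^2)=1$ is an isomorphism of moduli stacks induced by $\Phi$. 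This is the sense in which ``Toda's argument'' is being applied; the full motivic Hall algebra machinery, which does require the 2--Calabi--Yau property, is not needed and is not available here in the form you invoke it.
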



\begin{rem}
Wall crossing along the line $\omega=tH$ is very similar to
the classical wall crossing
of Gieseker semi-stability, since
${\frak A}_{(0,\omega)}$ is almost the same.
\end{rem}

\begin{cor}\label{cor:appendix:exceptional}
$$
e\left({\cal M}_H\left(r,\eta+\frac{r}{2}K_X,-\frac{s}{2}\right)\right)=
e\left({\cal M}_{H}\left(s,\eta+\frac{s}{2}K_X,-\frac{r}{2}\right)\right)
$$
for a general $H$.
\end{cor}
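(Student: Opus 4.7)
The strategy is to chain the two previous propositions through the Bridgeland ray $\omega = tH$, $t > 0$. Set
$v := (r,\eta+\tfrac{r}{2}K_X,-\tfrac{s}{2})$ and
$w := (s,\eta+\tfrac{s}{2}K_X,-\tfrac{r}{2})$.
After possibly replacing $H$ by a general ample divisor in the same chamber, we may assume $(\eta,H) > 0$, so that both $(\eta,tH) > 0$ and $(tH)^2 \gg 0$ hold whenever $t \gg 0$.

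First, for $t \gg 0$, the first part of the last proposition identifies ${\cal M}_{(0,tH)}(v) = {\cal M}_H(v)$, giving $e({\cal M}_H(v)) = e({\cal M}_{(0,tH)}(v))$. The Fourier--Mukai equivalence $\Phi_{X \to X}^{{\cal E}^{\vee}[1]}$ from the preceding proposition then sends this isomorphically to ${\cal M}_{(0,\omega')}(w)$, where $\omega' = tH/(tH)^2 = H/(t(H^2))$ is a small positive multiple of $H$. Hence $e({\cal M}_H(v)) = e({\cal M}_{(0,\omega')}(w))$.

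Next, I would apply the second part of the last proposition---invariance of $e({\cal M}_{(0,\tau)}(w))$ under general variation of $\tau$---to deform $\omega'$ along the ray $\mathbb{R}_{>0}H$ from the small value $\omega'$ to a large value $\omega'' = t''H$, $t'' \gg 0$. Re-applying the first part of that proposition yields ${\cal M}_{(0,\omega'')}(w) = {\cal M}_H(w)$ (using $(\eta,\omega'')>0$, automatic from our choice of $H$). Chaining the three equalities produces $e({\cal M}_H(v)) = e({\cal M}_H(w))$, which is the corollary.

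The only subtle point is the invariance of $e({\cal M}_{(0,\tau)}(w))$ across the finitely many walls on the ray $\mathbb{R}_{>0}H$ separating $\omega'$ from $\omega''$; but this is exactly the Toda-type wall-crossing invariance asserted in the previous proposition. Beyond that, the argument is a formal chain of the cited identifications, and the ``general $H$'' hypothesis absorbs any codimension issues at the Gieseker endpoints.
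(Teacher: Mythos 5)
Your argument is correct and is precisely the chain of identifications the paper intends for this corollary: the large-volume identification of ${\cal M}_{(0,tH)}(v)$ with the Gieseker moduli, the Fourier--Mukai isomorphism between the $(0,\omega)$- and $(0,\omega/(\omega^2))$-moduli exchanging $(r,\eta+\frac{r}{2}K_X,-\frac{s}{2})$ and $(s,\eta+\frac{s}{2}K_X,-\frac{r}{2})$, and the Toda-type invariance of $e$ along the ray ${\Bbb R}_{>0}H$. The one caveat is your claim that one ``may assume $(\eta,H)>0$'' after a general choice of $H$, which fails for instance when $\eta=0$; but this hypothesis is required already by the paper's own proposition and does not reflect a difference in approach.
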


We have another proof of Proposition \ref{prop:nodal:exceptional}. 
\begin{prop}\label{prop:appendix:exceptional}
Assume that $(\eta^2)+rs=-2$.
${\cal M}_H(r,\eta+\tfrac{r}{2}K_X,-\tfrac{s}{2}) \ne \emptyset$
for a general $H$
if and only if $\eta \equiv D \mod 2$,
where $D$ is a nodal cycle.
\end{prop}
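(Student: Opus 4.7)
\textbf{Proof plan for Proposition \ref{prop:appendix:exceptional}.}

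The plan is to invoke the unconditional $(-1)$-reflection of Corollary \ref{cor:appendix:exceptional}, combined with multiplication by $e^a$ for $a\in\NSf(X)$ (which corresponds to tensoring by a line bundle), to perform a Euclidean-algorithm reduction on the pair $(r,s)$ down to $r=2$, and then to close the rank-$2$ case by descent through the K3 double cover (equivalently, by appealing to Kim's result \cite[Thm.\ 3.4]{Kim1}).

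First, a parity observation: $(\eta^2)+rs=-2$ together with $r\equiv s\mod 2$ rules out $r,s$ both odd, since then $(\eta^2)$ would be odd and no nodal cycle $D$ (with $(D^2)=-2$ even) could be congruent to $\eta$ mod $2\NSf(X)$; in that case Theorem \ref{thm:exist:intro}~(i) also forces the moduli stack to be empty. Hence we may assume $r,s$ are both even. Multiplying $v=(r,\eta+\tfrac r2 K_X,-\tfrac s2)$ by $e^a$ produces a Mukai vector of the same shape with $\eta\mapsto\eta+ra$ (hence unchanged in $\NSf(X)/2\NSf(X)$, since $r$ is even) and $s\mapsto s-2(\eta,a)-r(a^2)$; combined with the swap $(r,s)\leftrightarrow(s,r)$ of Corollary \ref{cor:appendix:exceptional}, this yields a Euclidean reduction on $(r,s)$ preserving both $\langle v^2\rangle=-2$ and the class of $\eta$ mod $2\NSf(X)$. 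We land at $(r,s)=(2,s')$, and a further multiplication by $e^a$ allows us to replace $\eta$ by any chosen representative in its mod-$2$ class, reducing the question to the nonemptiness of ${\cal M}_H(2,D+K_X,-\tfrac{s'}{2})$ with $(D^2)+2s'=-2$.

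For the rank-$2$ base case, any stable $E$ with these invariants has $\chi(E,E)=-\langle v^2\rangle=2$ and $\dim\Hom(E,E)=1$, forcing $\Ext^2(E,E)\ne 0$; Serre duality and stability then yield $E\cong E\otimes K_X$. Hence $E=\pi_*F$ for a stable line bundle $F=\mathcal O_{\widetilde X}(C)$ on the K3 cover $\pi\colon\widetilde X\to X$, with $(C^2)=-2$ and $C+\iota^*C=\pi^*D$; Lemma \ref{lem:determinant} then gives $\det E=D+K_X$. Conversely, given a nodal cycle $D$, we split $\pi^*D=D_1+\iota^*D_1$ into disjoint nodal curves on $\widetilde X$ and push down a suitable twist of $\mathcal O_{\widetilde X}(D_1)$ to produce the required sheaf.

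The main obstacle is the reduction step: one must check that the tensor-and-swap procedure really terminates at $r=2$ (rather than stalling at some diagonal $(2k,2k)$), and carefully track how the torsion piece $\tfrac r2 K_X$, whose parity depends on $r\bmod 4$, behaves under the swap. A short case analysis guided by the dichotomy $\gcd(r,\eta,s)\in\{1,2\}$ of Lemma \ref{lem:Hauzer}~(2) should resolve this.
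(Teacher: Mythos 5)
Your plan coincides with the paper's proof: the paper likewise reduces to rank $2$ by invoking the proof of Theorem \ref{thm:e-poly} (with the unconditional reflection of Corollary \ref{cor:appendix:exceptional} available in the nodal case), and then settles the rank-$2$ base case by the K3-cover descent, citing Kim \cite{Kim1} rather than re-deriving it. The Euclidean reduction you single out as the main obstacle is exactly the content of the already-established Theorem \ref{thm:e-poly}, so it can simply be cited instead of redone.
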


\begin{NB}
Let $D$ be an effective divisor with $(D^2)=-2$.
Since $H^1({\cal O}_X)=H^2({\cal O}_X)=0$,
by the exact sequence
$$
0 \to {\cal O}_X(-D) \to {\cal O}_X \to {\cal O}_D \to 0,
$$
we have $H^1({\cal O}_D) \cong H^2({\cal O}_X(-D)) \cong
H^0({\cal O}_X(D+K_X))^{\vee}$.
\end{NB}

\begin{proof}
By the proof of Theorem \ref{thm:e-poly},
we have
$e({\cal M}_H(r,\eta+\tfrac{r}{2}K_X,-\tfrac{s}{2}))=
e({\cal M}_H(2,\eta'+K_X,-\tfrac{s'}{2}))$,
where $\eta \equiv \eta' \mod 2$.
By \cite{Kim1}, ${\cal M}_H(2,\eta'+K_X,-\tfrac{s'}{2}) \ne \emptyset$
if and only if $\eta \equiv D \mod 2$, where
$D$ is a nodal cycle.
Therefore the claim holds. 
\end{proof}

\begin{NB}
Let $v_0$ be a Mukai vector with $\langle v_0^2 \rangle=-1$.
For a general $H$,
${\cal M}_H(l v_0)=
\{E_0^{\oplus n} \oplus E_0(K_X)^{\oplus m} \mid n+m=l \}$,
where $E_0$ is a stable sheaf with $v(E_0)=v_0$.
\end{NB}

\end{document}